\makeatletter \@addtoreset{equation}{section} \makeatother
\renewcommand\thetable{\thesection.\@arabic\c@table}
\theoremstyle{plain}
\newtheorem{maintheorem}{Theorem}
\newtheorem{theorem}{Theorem}[section]
\newtheorem{proposition}{Proposition}[section]
\newtheorem{lemma}{Lemma}[section]
\newtheorem{corollary}{Corollary}[section]
\newtheorem{definition}{Definition}[section]
\newtheorem{remark}{Remark}[section]
\newcommand{\Z}{\mathbb{Z}}
\newcommand{\N}{\mathbb{N}}
\newcommand{\R}{\mathbb{R}}
\newcommand{\dist}{\operatorname{dist}}
\newcommand{\graph}{\operatorname{graph}}
\newcommand{\Lip}{\operatorname{Lip}}
\newcommand{\cI}{{\mathcal I}}
\newcommand{\cE}{\mathcal{E}}
\newcommand{\cM}{\mathcal{M}}
\newcommand{\cB}{\mathcal{B}}
\newcommand{\cW}{\mathcal{W}}
\newcommand{\cS}{\mathcal{S}}
\begin{document}
	\title{Sub-additivity of measure theoretic entropies of commuting transformations on Banach spaces}
	
	\author{Chiyi Luo}
	\address{Center for Dynamical Systems and Differential Equations, School of Mathematical Sciences, Soochow University\\
		Suzhou 215006, Jiangsu, P.R. China}
	\email{luochiyi98@gmail.com}
	
	\author{Yun Zhao}
	\address{Center for Dynamical Systems and Differential Equations, School of Mathematical Sciences, Soochow University\\
		Suzhou 215006, Jiangsu, P.R. China}
	\email{zhaoyun@suda.edu.cn}
	
	\thanks{This work is partially supported by NSFC (12271386, 11871361), and the second author is
		    partially supported by Qinglan project from Jiangsu Province.}
	
	\begin{abstract}
		This paper considers two commuting smooth transformations on a Banach space, and proves the sub-additivity of the measure theoretic entropies under mild conditions.  Furthermore, some additional conditions are given  for the equality of the entropies. This extends Hu's work about commuting diffeomorphisms in a finite dimensional space (Huyi Hu, 1993, Ergod. Th. Dynam. Sys., \textbf{13}: 73-100) to the case of systems on an infinite dimensional Banach space.
	\end{abstract}

	\keywords{Sub-additivity of entropies, commuting transformations, multiplicative ergodic theorem, infinite dimensional Banach space}
	
	\footnotetext{2010 {\it Mathematics Subject classification}: 37A35, 37H15, 37L55.}

	\maketitle

	%%%%%%%%%%%%%%%%%%%%%%%%%%%%%%%%%%%%%%%%%%%%%%%%%%
	\section{\textbf{Introduction} }\label{intro}
	In the study of dynamical systems, entropy is one of the fundamental invariant quantities that is widely used to characterize the complexity of a dynamical system, and the measure theoretic entropy measures the average amount of information and complexity in a system. For  a differentiable dynamical system on a compact finite dimensional manifold, there are several well-known results that {involve} the measure theoretic entropy, such as Ruelle's inequality, Pesin's entropy formula and {the fact} that positivity of the measure theoretic entropy implies the existence of hyperbolic periodic orbits and horseshoes. It is a natural question  whether these results in finite dimensional systems hold in infinite dimensional systems.
	
	The Oseledets multiplicative ergodic theorem (MET for short) plays a key role in the theory of differentiable dynamical systems.
    Ruelle \cite{Ruelle82} proved the MET for operator cocycles on Hilbert spaces. Thieullen \cite{Thieullen87} proved it for continuous operator cocycles on Banach spaces. For random dynamical systems, Lian and L\"{u} \cite{Lian10} proved it for {strongly} measurable operator cocycles on separable Banach spaces; Varzaneh and Riedel \cite{Varzaneh21} proved the multiplicative ergodic theorem for a strongly measurable semi-invertible operator on fields of Banach spaces, and constructed the stable and unstable manifolds theorem; and Li and Shu \cite{Li12} proved Ruelle's inequality of random dynamical systems in Banach spaces.
    
    For $C^2$  Fr\'{e}chet differentiable mappings of Banach spaces,
    Blumenthal and Young \cite{Young17} recently studied the relation between entropy and volume growth for a natural notion of volume defined on finite dimensional subspaces, and characterized SRB measures   as exactly those measures for which entropy is equal to the volume growth on unstable manifolds. This extended a significant result in \cite{Young85} for diffeomorphisms of finite dimensional Riemannian manifolds. See \cite{Lian16} for the existence of SRB measures for a $C^2$ partially hyperbolic map on a Hilbert space. Another important result in finite dimensional smooth ergodic theory is Katok's landmark approximation result that  positive measure theoretic entropy  implies the existence of hyperbolic periodic orbits and horseshoes \cite{Katok80}. Lian and Young \cite{Lian11} generalized Katok's theorem to Fr\'{e}chet differentiable maps on {a} Hilbert space  with injective, compact derivative and admitting a compact invariant set on which the invariant measure is supported, and later to a similar setting for a $C^2$ smooth semiflow on Hilbert spaces \cite{Lian12}. Later, Lian and Ma \cite{Lian20} extended this result to maps in a separable Banach space. These previous researches {were} driven by the study of dissipative parabolic differential equations having global attractors, for instance, 2D Navier-Stokes equations and other reaction diffusion equations.

    If $f,g$ are two commuting $C^{2}$ diffeomorphisms on a smooth compact manifold, Hu \cite{Hu96} proved the sub-additivity of measure theoretic entropies, i.e., $h_{\mu}(f\circ g)\leq h_{\mu}(f)+h_{\mu}(g)$ with respect to any $(f,g)$-invariant measure $\mu$ (see Section \ref{setting} for the definition). This result does not remain true for measure preserving transformations in general,  see an example in the end of \cite{Hu96}, where the author constructed two commuting measure preserving transformations $f,g$ of a compact manifold $M$ satisfying that  $h_{\mu}(f)=h_{\mu}(g)=0$ and $h_{\mu}(fg)>0$ with respect to a measure $\mu$.
    The main aim of this paper is to generalize the above result to  $C^{2}$ Fr\'{e}chet differentiable mappings of Banach spaces. Namely, let $f,g$ be two commuting $C^{2}$ Fr\'{e}chet differentiable maps on a Banach space $\mathcal B$, we will show that $h_{\mu}(f\circ g)\leq h_{\mu}(f)+h_{\mu}(g)$ with respect to an $(f,g)$-ergodic measure $\mu$ under some suitable conditions.

	Although one may utilize some existing techniques to deal with the lack of local compactness of the phase space, there are some new questions {that} should be addressed in our case of the infinite dimensional Banach spaces. The first one is  caused by the non-invertibility of the maps $f$ and $g$. The maps may {not} be onto and {may have} arbitrarily strong contraction in some directions. Even the map $f^{-1}$ can be defined in some invariant subset, one cannot expect it to have nice regular properties. The second is that the Lyapunov metric does  depend on points, one has difficulties in  estimating  the norm of vectors since there is no inner product.
	
	This paper is organized as follows: In Section \ref{setting}, we give some preliminary results and the statements of the main results.  In Section \ref{MET} and Section \ref{SEC4}, we recall the Multiplicative Ergodic Theorem for commuting transformations and  establish the Lyapunov norms in our case. In Section \ref{S5} we give the {details} of  the sub-additivity of entropies. In Section \ref{S6} and Section \ref{S7} we provide some properties of unstable manifolds and give a condition of the equality of the entropies in Section \ref{S7}.
	
	\section{\textbf{Preliminaries  and statements}}\label{setting}
	Throughout this paper, let $\cB$ denote {an} (infinite dimensional) separable Banach space with norm $|\cdot|$, and let $A\subset \cB$ be a compact subset. 
	Consider two maps $f:\cB \rightarrow \cB$ and $g:\cB \rightarrow \cB$, and denote by $fg$ the composition $f\circ g$ for simplicity,  assume the following properties are satisfied:
	\begin{enumerate}
		\item[(H1)] 
		\begin{enumerate} 
			\item[(i)]  $fg=g f$, both $ f $ and $ g $ are $C^2$ Fr\'{e}chet differentiable and injective;
			\item[(ii)] denoted by $Df_x$ and $Dg_x$ the derivative of $f$ and $g$ at the point $x\in \cB$, they are also injective.
		\end{enumerate}
		\item[(H2)] 
		\begin{enumerate} 
			\item[(i)]  $ f(A)=A $ and $ g(A)=A $;  	
			\item[(ii)] the set $A$ has finite (upper) box-counting dimension.
		\end{enumerate}
		\item[(H3)] Assume that
		$$ l_{\alpha}(f)=\lim_{n\rightarrow \infty}\frac{1}{n} \log \sup_{x\in A} |Df^{n}_x|_{\alpha}<0 \ \text{,} \
		   l_{\alpha}(g)=\lim_{n\rightarrow \infty}\frac{1}{n} \log \sup_{x\in A} |Dg^{n}_x|_{\alpha}<0.   $$
		Here $|T|_{\alpha}$ is the Kuratowski measure of noncompactness for a linear operator $ T $ (see definition in below).
	\end{enumerate}
	
	\begin{definition}
		The Kuratowski measure of noncompactness for a linear operator $ T : \cB\rightarrow \cB$ is defined as
		$$|T|_{\alpha}=\inf_{r>0}\Big\{r:T(B) \ \text{can be covered by finite number of balls with radius}\ r \Big\}$$
		where $B$ is the unit ball in $ \cB $.
	\end{definition}
	Let $h=f \ \text{or} \ g$, since $|T_1  T_2|_{\alpha}\leq |T_1|_{\alpha} \cdot |T_2|_{\alpha}$ for any linear operators $T_1,T_2$ on $\cB$ (e.g., see \cite[Section 2]{Lian10} ),
	we have that
	$$\sup_{x\in A} |Dh^{n+k}_x|_{\alpha}\leq \sup_{x\in A} |Dh^{n}_x|_{\alpha} \cdot \sup_{x\in A} |Dh^{k}_x|_{\alpha}.$$
	Hence, all the limits in (H3) exist.
	\begin{remark}
		The conditions (H1) and (i) of (H2) are natural properties  guaranteeing the existence of Lyapunov exponents in the Oseledets multiplicative ergodic theorem on a Banach space (see \cite{Thieullen87}). 
		Condition (H3) is discussed in Section \ref{SEC4}, it implies that the tangent space {can} be decomposed  into the unstable, center and stable subspaces with respect to the transformation. 
		For (ii) of (H2) , it is naturally satisfied when $Df_x$(or $Dg_x$) is the sum of a compact operator and a contraction for each $x\in A$ (see \cite{Mane81}), and it will be used in Sections \ref{sub5.3} and \ref{sub7.2}.
	\end{remark}
	
	For every measurable transformation $h:\cB \rightarrow \cB$, let $\cM(h,A)$ denote the set of all $h$-invariant Borel probability measures supported on $A$, and put $\cM(f,g,A):=\cM(f,A) \cap \cM(g,A)$. It is proved in \cite{Hu96} that the set $\cM(f,g,A)$ is nonempty. A measure $\mu \in \cM(f,g,A)$ is $(f,g)$-ergodic, if for any measurable set $B$ with $\mu(f^{-1}(B) \triangle B)=\mu(g^{-1}(B) \triangle B)=0$, one has $\mu(B)=0$ or $1$. Let $\cE(f,g,A)$ denote the set of all $(f,g)$-ergodic measures. Note that if $\mu \in \cM(f,g,A)$ is $f$-ergodic or $g$-ergodic then $\mu \in \cE(f,g,A)$, and  the inverse is not true in general.
	
	We recall some basic properties of the sets $\cM(f,g,A)$ and $\cE(f,g,A)$, and refer the reader to \cite[Section 1]{Hu96} for more details.
	\begin{proposition}\label{P2}
		If $T$ and $S$ are commuting continuous maps on a compact metric space $X$, then
		\begin{enumerate}
			\item[(1)]
			\begin{enumerate}
				\item[(i)]	 $T$ and $S$ have common invariant measures, i.e., $\cM(T,S,X)\neq \varnothing$;
				\item[(ii)]	 the set $\cM(T,S,X)$ is convex;
				\item[(iii)] $\cM(TS,S,X)=\cM(T,S,X)$.
			\end{enumerate}
			\item[(2)]
			\begin{enumerate}
				\item[(i)]   $\cE(T,S,X)$ is the set of all extreme point of $\cM(T,S,X)$;
				\item[(ii)]  for every $\mu\in \cM(T,S,X)$, there exists a unique Borel probability measure $\pi$ on $\cE(T,S,X)$
				such that $$\mu=\int_{\cE(T,S,X)} v_e\, d\pi(v_e)$$
				\item[(iii)] $\cE(TS,S,X)=\cE(T,S,X)$.
				\item[(iv)]  if $\mu \in \cE(T,S,X)$, then for any measurable function $\psi$ on $X$ with $\psi(f(x))\leq \psi(x)$ and $\psi(g(x))\leq \psi(x)$, one has that $\psi$ is a constant for $\mu$-a.e. $x$.
			\end{enumerate}	
		\end{enumerate}
	\end{proposition}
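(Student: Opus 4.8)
These statements are the natural analogues, for a commuting pair, of classical facts about invariant measures of a single transformation, and the plan is to transcribe the one-map proofs, invoking $TS=ST$ only where the two actions must be made compatible. For (1)(i) I would start from any $T$-invariant Borel probability measure $\mu_0$ (which exists by the Krylov--Bogolyubov argument, $X$ being compact metric and $T$ continuous) and average it along $S$: since $TS=ST$ one has $T_*(S^k_*\mu_0)=S^k_*(T_*\mu_0)=S^k_*\mu_0$ for every $k\ge 0$, so each average $\frac1n\sum_{k=0}^{n-1}S^k_*\mu_0$ lies in the convex, weak-$*$ closed set of $T$-invariant measures; any subsequential weak-$*$ limit $\mu$ is then $T$-invariant by closedness and $S$-invariant by the standard averaging argument, hence $\mu\in\cM(T,S,X)$. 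Item (1)(ii) is immediate from $\cM(T,S,X)=\cM(T,X)\cap\cM(S,X)$. For (1)(iii) I would only chase pushforwards: when $S_*\mu=\mu$ one has $(TS)_*\mu=T_*S_*\mu=T_*\mu$, so $(TS)_*\mu=\mu$ if and only if $T_*\mu=\mu$; hence $\cM(TS,S,X)=\cM(T,S,X)$.

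For (2)(i) I would treat the two inclusions separately. If $\mu$ is not $(T,S)$-ergodic, choose a Borel set $B$ with $0<\mu(B)<1$ and $\mu(T^{-1}B\triangle B)=\mu(S^{-1}B\triangle B)=0$; then the normalized restrictions $\mu(B)^{-1}\mu|_B$ and $\mu(B^c)^{-1}\mu|_{B^c}$ are both $T$- and $S$-invariant (because $B$ coincides mod $\mu$ with $T^{-1}B$ and with $S^{-1}B$, and $T,S$ preserve $\mu$), which writes $\mu$ as a nontrivial convex combination of two distinct elements of $\cM(T,S,X)$, so $\mu$ is not extreme. Conversely, if $\mu=t\nu_1+(1-t)\nu_2$ with $0<t<1$ and $\nu_1\ne\nu_2$ in $\cM(T,S,X)$, then $\nu_1\ll\mu$ and the density $\varphi=d\nu_1/d\mu$ is $(T,S)$-invariant by the classical Radon--Nikodym argument applied to each of the measure-preserving maps $T$ and $S$; if $\mu$ is $(T,S)$-ergodic then $\varphi$ is $\mu$-a.e.\ constant, hence $\equiv 1$ and $\nu_1=\mu=\nu_2$, a contradiction, so $\mu$ is extreme. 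Item (2)(iii) then follows immediately from (1)(iii) and (2)(i), since $\cE(TS,S,X)$ and $\cE(T,S,X)$ are the extreme-point sets of one and the same convex set.

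For the ergodic decomposition (2)(ii), note that $\cM(T,S,X)$ is a metrizable compact convex subset of the space of Borel probability measures on $X$, whose extreme-point set is precisely $\cE(T,S,X)$ by (2)(i); Choquet's theorem then furnishes a representing measure $\pi$ on $\cE(T,S,X)$ with $\mu=\int v_e\,d\pi(v_e)$. I expect the uniqueness of $\pi$ to be the main obstacle, being equivalent to the assertion that $\cM(T,S,X)$ is a Choquet simplex. I would establish it as in the single-transformation case: construct a canonical $\pi$ by disintegrating $\mu$ over the $\sigma$-algebra $\cI$ of $(T,S)$-invariant Borel sets (Rokhlin's disintegration theorem applies since $X$ is Polish) and verifying that $\mu$-almost every conditional measure lies in $\cE(T,S,X)$; then use the mutual singularity of distinct ergodic measures to force every representing measure supported on the extreme points to coincide with this canonical one.

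Finally, for (2)(iv) (whose hypotheses amount to $\psi\circ T\le\psi$ and $\psi\circ S\le\psi$ $\mu$-a.e.), replacing $\psi$ by $\arctan\psi$ (a bounded, strictly increasing function of $\psi$ satisfying the same two inequalities) reduces us to an integrable $\psi$. Then $\int(\psi-\psi\circ T)\,d\mu=0$ because $T$ preserves $\mu$, while $\psi-\psi\circ T\ge 0$ $\mu$-a.e., whence $\psi\circ T=\psi$ $\mu$-a.e., and likewise $\psi\circ S=\psi$ $\mu$-a.e. Consequently each sublevel set $\{\psi\le c\}$ is invariant mod $\mu$ under both $T$ and $S$, hence has $\mu$-measure $0$ or $1$ by ergodicity, which forces $\psi$ to equal the constant $\inf\{c:\mu(\psi\le c)=1\}$ for $\mu$-almost every $x$.
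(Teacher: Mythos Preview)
The paper does not prove this proposition; it is stated as a recall of known facts with a reference to \cite[Section~1]{Hu96}. Your proposal supplies a complete and correct argument along the standard lines, so there is nothing in the paper to compare it against.

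One minor remark on (2)(i): your appeal to the ``classical Radon--Nikodym argument'' to conclude that $\varphi=d\nu_1/d\mu$ satisfies $\varphi\circ T=\varphi$ $\mu$-a.e.\ is correct, but for non-invertible $T$ this step is not entirely automatic. What one gets directly is $E_\mu[\varphi\mid T^{-1}\mathcal B]=\varphi\circ T$; to upgrade this to $\varphi=\varphi\circ T$ one uses that $\varphi\le t^{-1}$ is bounded (since $\nu_1\le t^{-1}\mu$), hence $\varphi\in L^2(\mu)$, and then $\|\varphi\circ T\|_2=\|\varphi\|_2$ forces the conditional expectation to equal $\varphi$ itself. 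You may wish to insert this sentence, or simply note that in the paper's setting $T$ and $S$ restrict to homeomorphisms of $A$ (by (H1)(i), (H2)(i) and compactness), in which case $T^{-1}\mathcal B=\mathcal B$ and the issue does not arise.
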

	
	Let $h=f,g$ or $fg$. Denote by $h_\mu(h)$ the measure theoretic entropy of $h$ with respect to $\mu$ (see \cite[Chapter 4]{Walters82} for details).
	The assumption (i) of (H1) implies that $h|_{A}$ is invertible, and $h_\mu(h)=h_\mu (h|_{A})$ for every $\mu\in \cM(f,g,A)$.
	
	\begin{definition}
		A sub-bundle $V$ is  \textbf{finite dimensional $Dh$-invariant}  with respect to a measure  $\mu \in \cM(f,g,A)$ if it satisfies the following properties:
		\begin{enumerate}
			\item[(1)] $x\mapsto V(x)$ is measurable and $\dim V(x)$ is finite for $\mu$-a.e. $x$;
			\item[(2)] $Dh_x V(x)=V(h(x))$ for $\mu$-a.e $x$,
		\end{enumerate}
		where $h=f$ or $g$.
	\end{definition}
	
	Given $\mu \in \cM(f,g,A)$, assume that the following condition is satisfied:
	\begin{enumerate}
		\item[(H4)] For every finite dimensional sub-bundle $V$ which is both $Df$-invariant and $Dg$-invariant with respect to the measure $\mu\in \cM(f,g,A)$, then either $$\int \log^{+}|(Df_x|_{V(x)})^{-1}|d\mu(x)<\infty \ \text{and} \ \int\log^{+}|(Dg_x|_{V(x)})^{-1}|d\mu(x)<\infty$$
        or both $\log^{+}|(Df|_{V})^{-1}|$ and $\log^{+}|(Dg|_{V})^{-1}|$ are not integrable  with respect to $\mu$.
	\end{enumerate}
	\begin{remark}
    The above condition is satisfied for every $\mu\in \cM(f,g,A)$ if $f$ and $g$ are $C^2$ diffeomorphisms on some neighborhood of $A$. It is used in the proof of Corollary \ref{C3} in Section \ref{MET}, and which is only used in the proof of (e) of Theorem \ref{MET fg}.
	\end{remark}

	Under the previous assumptions, we have the following theorem.
	
	\begin{maintheorem}\label{A}
		Let $\cB$ be a separable Banach space with norm $|\cdot|$. Given a compact subset $A\subset \cB$ and  a measure $\mu \in \cE(f,g,A)$. If $f,g:\cB \rightarrow \cB$ satisfy the conditions (H1)-(H4), then we have that
		$$ h_\mu(f g)\leq h_{\mu}(f)+h_{\mu}(g).$$
	\end{maintheorem}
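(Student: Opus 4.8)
The plan is to run, in the infinite-dimensional setting, the finite-dimensional argument of Hu \cite{Hu96}: reduce everything to the finitely many ``expanding'' directions supplied by the multiplicative ergodic theorem, and then split the Lyapunov exponents of $fg$ between those of $f$ and those of $g$. First I would apply the MET for the commuting pair (Theorem \ref{MET fg}), together with the Lyapunov norms constructed in Section \ref{SEC4}, to the $(f,g)$-ergodic measure $\mu$. On a set of full $\mu$-measure this yields a joint measurable splitting $T_x\cB=\big(\bigoplus_{j}E_j(x)\big)\oplus E^{\le}(x)$ whose blocks $E_j$ are finite-dimensional and simultaneously $Df$-, $Dg$- and $D(fg)$-invariant, with Lyapunov exponents $\lambda^f_j,\lambda^g_j$ that are $\mu$-a.e.\ constant by Proposition \ref{P2}. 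Since $f$ and $g$ commute one has $(fg)^n=f^ng^n$, so the $fg$-exponent along $E_j$ is $\lambda^{fg}_j=\lambda^f_j+\lambda^g_j$. Condition (H3) gives $l_\alpha(f),l_\alpha(g)<0$, so on the tail $E^{\le}$ each of $f$, $g$ --- hence $fg$ --- contracts in the Lyapunov metric; this infinite-dimensional part lies in the stable bundle of all three maps and contributes nothing to $h_\mu(f)$, $h_\mu(g)$, $h_\mu(fg)$. Thus only the finitely many finite-dimensional blocks matter.

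Next I would pass to unstable manifolds. Using the Pesin-type unstable-manifold theory in the Banach setting developed in Sections \ref{S6}--\ref{S7} (in the spirit of Blumenthal--Young \cite{Young17} and Lian--Young \cite{Lian11}, with Corollary \ref{C3} and (H4) controlling the inverses $(Dh|_{E_j})^{-1}$ on the finite-dimensional blocks), I would obtain, for each $h\in\{f,g,fg\}$, a measurable partition subordinate to the unstable lamination $W^u_h$ that realizes the entropy, together with a Ledrappier--Young-type formula
$$h_\mu(h)=\sum_{j}\lambda^{h,+}_j\,\gamma_j,\qquad \lambda^{h,+}_j:=\max\{\lambda^h_j,0\},$$
the sum running over the finitely many finite-dimensional blocks, where $\gamma_j\in[0,\dim E_j]$ is the transverse dimension of $\mu$ in the $E_j$-direction, i.e.\ the dimension of the conditional measures of $\mu$ along $E_j$ inside unstable manifolds. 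The crucial point --- and the second use of commutativity --- is that $\gamma_j$ depends only on $\mu$ and the common Oseledets block $E_j$, not on which of $f$, $g$, $fg$ is used, because the ``fast'' unstable manifolds tangent to sub-sums of the joint splitting are intrinsic to that splitting, so the conditional measures of $\mu$ on them and their dimensions are common to the three dynamics. Making this identification precise is where the bulk of the work lies.

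Granting this, Theorem \ref{A} reduces to an elementary estimate. For every block with $\lambda^{fg}_j>0$ we have $\lambda^{fg}_j=\lambda^f_j+\lambda^g_j\le\lambda^{f,+}_j+\lambda^{g,+}_j$, while blocks with $\lambda^{fg}_j\le0$ are absent from the first sum below; hence
$$h_\mu(fg)=\sum_{j:\,\lambda^{fg}_j>0}\lambda^{fg}_j\gamma_j\le\sum_{j:\,\lambda^{fg}_j>0}\lambda^{f,+}_j\gamma_j+\sum_{j:\,\lambda^{fg}_j>0}\lambda^{g,+}_j\gamma_j\le\sum_{j}\lambda^{f,+}_j\gamma_j+\sum_{j}\lambda^{g,+}_j\gamma_j=h_\mu(f)+h_\mu(g),$$
the middle inequality merely adding back the non-negative terms with $\lambda^{fg}_j\le0$. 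This is Hu's algebraic mechanism, and it also explains why the statement must fail for general commuting measure-preserving maps (as in the counterexample at the end of \cite{Hu96}): without a smooth structure there are no exponents to split.

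The main obstacle is therefore entirely in the second step: setting up Pesin unstable manifolds and a Ledrappier--Young-type formula, with \emph{common} transverse dimensions, over an infinite-dimensional Banach space. The two genuinely new difficulties are those flagged in the introduction. First, $f$ and $g$ are invertible only on the compact set $A$ and their inverses need not be regular, so the graph-transform and absolute-continuity arguments must be run forward in time on $A$, with the finite box-dimension hypothesis (ii) of (H2) playing the role of local compactness. Second, the Lyapunov norm genuinely varies with the base point and is not induced by an inner product, so volume and Jacobian estimates along the finite-dimensional blocks must go through the Lyapunov norms of Section \ref{SEC4} rather than through Euclidean geometry. Once those foundations are in place, the proof of Theorem \ref{A} in Section \ref{S5} is exactly the short computation displayed above.
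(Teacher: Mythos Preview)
Your approach diverges fundamentally from the paper's, and it rests on a step that is not available under the hypotheses of Theorem~\ref{A}.

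The paper does \emph{not} prove Theorem~\ref{A} via a Ledrappier--Young dimension formula. Section~\ref{S5} instead follows Hu's original strategy directly: using the Lyapunov norms of Section~\ref{SEC4}, it shows (Lemma~\ref{L55}) that a suitable intersection of an $(n,\rho_{\delta,f},f)$-Bowen ball, a shifted $(n,\rho_{\delta,g},g)$-Bowen ball, and two ordinary balls of radius $\delta e^{-2n\varepsilon}$ is contained in a single $(n,\delta\ell K_0,fg)$-Bowen ball. One then counts such intersections --- the two small balls are handled by the finite box-counting dimension (H2)(ii), the Bowen balls by the Brin--Katok/Katok formulas of Section~\ref{Sub51} --- and obtains $N_n(4\delta\ell K_0,r,fg)\le e^{n(h_\mu(f)+h_\mu(g)+O(\varepsilon))}$, which gives the inequality after $\varepsilon\to0$. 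No unstable manifolds, no conditional measures, no transverse dimensions appear anywhere in the proof of Theorem~\ref{A}; Sections~\ref{S6}--\ref{S7} are used only for Theorem~\ref{B}.

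Your proposed route has a genuine gap. First, a Ledrappier--Young formula $h_\mu(h)=\sum_j\lambda_j^{h,+}\gamma_j$ for arbitrary $(f,g)$-ergodic $\mu$ is not established in this paper (nor in \cite{Young17}, which characterizes SRB measures but does not prove the full dimension formula for general $\mu$ in Banach spaces). Second, and more seriously, your ``crucial point'' --- that the transverse dimensions $\gamma_j$ are the same for $f$, $g$, and $fg$ because the fast unstable manifolds are ``intrinsic to the splitting'' --- is not justified. The intermediate unstable foliations are defined dynamically, not just by their tangent bundles; the paper proves equality of local unstable manifolds only under the extra hypothesis $E^u(x,f)=E^u(x,g)$ of Theorem~\ref{B} (see Theorem~\ref{T6}), and even then only for the full unstable bundle, not for each intermediate block. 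Under the bare hypotheses (H1)--(H4) of Theorem~\ref{A}, the unstable bundles of $f$, $g$, $fg$ are in general different sums of the joint blocks $E_{i,j},G_{i,j}$, so there is no reason the conditional-measure geometry should match. In short, the ``short computation'' you display would indeed finish the proof --- but the inputs it needs are strictly stronger than what the paper proves, and your final sentence misdescribes what Section~\ref{S5} actually contains.
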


    When does the inequality in the above theorem become an equality? To answer this question, we introduce some notations first. 	
	Let $\cB_x:=\{x\}+\cB$ denote the tangent space of $\cB$ at $x\in \cB$ and choose $\lambda_{\alpha}\in \R$ such that
	\begin{equation}\label{eq:lammda-a}
		0>\lambda_{\alpha}>\max\{l_{\alpha}(f),l_{\alpha}(g)\}.
	\end{equation}
    Let $h=f$ or $g$. Given $\mu\in \cE(f,g,A)$, denote by $\lambda_1(x,h)>\lambda_2(x,h)>\cdots>\lambda_{r(x,h)}(x,h)>\lambda_{\alpha}$ the distinct Lyapunov exponents of $(h,\mu)$  which are bigger than $\lambda_{\alpha}$ for $\mu$-almost every $x$.
    Define respectively the unstable, center and stable subspace of $\cB_x$ by
	$$E^{u}(x,h)=\bigoplus_{\{i:\lambda_i(x,h)>0\}}E_i(x,h) \ \text{,} \ E^{c}(x,h)=\bigoplus_{\{i:\lambda_i(x,h)=0\}}E_i(x,h)$$
	and $$E^{s}(x,h)=\bigoplus_{\{i:\lambda_i(x,h)<0\}}E_i(x,h)\oplus E_{\alpha}(x,h).$$
    See Theorem \ref{MET h} for detailed description of the above notations.
	For any $\delta>0$, denote by $w^{u}_{\delta}(x,h)$ the local unstable manifold of $h$ at the point $x$ (see Section \ref{S6} for the precise  definition).
	
	\begin{maintheorem}\label{B}
		Let $\cB$ be a separable Banach space with norm $|\cdot|$. Given a compact subset $A\subset \cB$ and  a measure $\mu \in \cE(f,g,A)$. Assume that $f,g:\cB \rightarrow \cB$ satisfy the conditions (H1)-(H4) and $E^{u}(x,f)=E^{u}(x,g)$ for $\mu$-almost every $x$.
		Then, there exists a number $\hat{\delta}>0$ such that
		$$w^{u}_{\delta}(x,f)=w^{u}_{\delta}(x,g)\,\, \mu-a.e. \, x $$
		for every $0<\delta<\hat{\delta}$. Moreover, if $E^{c}(x,f)=\{0\}$ {and}  $E^{c}(x,g)=\{0\}$  for $\mu$-almost every $x$, then
		$$h_\mu(fg)=h_{\mu}(f)+h_{\mu}(g).$$
	\end{maintheorem}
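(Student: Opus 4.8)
The plan is to argue in three steps: first extract the structural consequences of the commutation $fg=gf$; then use the unstable–manifold theory of Section~\ref{S6} to obtain the coincidence of the local unstable manifolds; and finally deduce the entropy identity from Rokhlin's formula along unstable manifolds together with a conditional–entropy chain rule. Throughout one works with the invertible restrictions $f|_{A},g|_{A},(fg)|_{A}$, for which $h_{\mu}(h)=h_{\mu}(h|_{A})$, and inside a neighbourhood of $A$ on which the relevant backward iterates of $f,g,fg$ are defined.

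\emph{Step 1: consequences of commutation.} Differentiating $g\circ f^{-1}=f^{-1}\circ g$ gives $Dg_{f^{-n}(x)}\circ Df^{-n}_{x}=Df^{-n}_{g(x)}\circ Dg_{x}$ along orbits, so, since $\sup_{y\in A}|Dg_{y}|<\infty$, one has $|Df^{-n}_{g(x)}(Dg_{x}v)|\le C\,|Df^{-n}_{x}v|$; hence $Dg_{x}E^{u}(x,f)\subseteq E^{u}(g(x),f)$, and, $Dg_{x}$ being injective and $\dim E^{u}(\cdot,f)$ constant $\mu$-a.e.\ (Proposition~\ref{P2}(2)(iv) applied to $-\dim E^{u}(\cdot,f)$), in fact $Dg_{x}E^{u}(x,f)=E^{u}(g(x),f)$; symmetrically $Df_{x}E^{u}(x,g)=E^{u}(f(x),g)$. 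In particular, since $E^{u}(x,f)=E^{u}(x,g)=:E^{u}(x)$ $\mu$-a.e., both $Df|_{E^{u}(x)}$ and $Dg|_{E^{u}(x)}$ have only strictly positive Lyapunov exponents. Moreover, since $f^{\pm1},g^{\pm1}$ are locally Lipschitz near $A$ and commute, $g$ and $g^{-1}$ carry the $f$-unstable leaves onto $f$-unstable leaves ($g(W^{u}(x,f))=W^{u}(g(x),f)$, and so on), and $f,f^{-1}$ do the same for the $g$-unstable leaves.

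\emph{Step 2: the unstable manifolds coincide.} Fix a $\mu$-typical $x$. For $y\in w^{u}_{\delta}(x,g)$ with $\delta$ small, $f^{-n}y$ lies on the $g$-unstable leaf $W^{u}(f^{-n}x,g)$ (Step~1); near the orbit point $f^{-n}x$ this leaf is a graph over $E^{u}(f^{-n}x,g)=E^{u}(f^{-n}x,f)$ of bounded $C^{1}$-geometry (unstable–manifold theorem of Section~\ref{S6}), so its tangent spaces there stay close to $E^{u}(f^{-n}x,f)$, on which $Df^{-1}$ contracts in the Lyapunov metric of Section~\ref{SEC4}. A graph-transform/cone estimate then gives $\dist(f^{-n}y,f^{-n}x)\to0$ exponentially, so $w^{u}_{\delta}(x,g)\subseteq W^{u}(x,f)$; being an open piece through $x$ of the same dimension tangent to $E^{u}(x,f)$, it lies in $w^{u}_{\delta'}(x,f)$ for a suitable $\delta'$. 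The symmetric argument (swapping $f$ and $g$, using that $g^{-1}$ preserves $f$-unstable leaves and $Dg^{-1}$ contracts $E^{u}(\cdot,g)=E^{u}(\cdot,f)$) gives the reverse inclusion, so there is $\hat\delta>0$ with $w^{u}_{\delta}(x,f)=w^{u}_{\delta}(x,g)$ $\mu$-a.e.\ for all $0<\delta<\hat\delta$; in particular the global leaves agree, $W^{u}(x,f)=W^{u}(x,g)=:W^{u}(x)$.

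\emph{Step 3: the entropy identity.} Assume $E^{c}(x,g)=\{0\}$ $\mu$-a.e.\ (the case $E^{c}(x,f)=\{0\}$ is symmetric). By the description of the Lyapunov spectrum of $fg$ for commuting maps (Theorem~\ref{MET fg}, which uses (H4)), this together with $E^{u}(\cdot,f)=E^{u}(\cdot,g)$ forces $E^{u}(x,fg)=E^{u}(x)$, hence (by the argument of Step~2 applied to $fg$) $W^{u}(x,fg)=W^{u}(x)$ $\mu$-a.e. Choose a measurable partition $\xi$ subordinate to $W^{u}(\cdot)$ that is increasing and generating for both $f$ and $g$ — possible since $f$ and $g$ both expand the common leaves (their exponents along $TW^{u}=E^{u}(\cdot,f)=E^{u}(\cdot,g)$ being positive); e.g.\ take $\xi=\xi_{f}\vee\xi_{g}$ with $\xi_{h}$ a Rokhlin (increasing, generating) partition subordinate to $W^{u}(\cdot,h)$ — so that $\xi$ is automatically increasing and generating for $fg$ as well. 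By Rokhlin's entropy formula for partitions subordinate to unstable manifolds, $h_{\mu}(h)=H_{\mu}(\xi\mid h\xi)$ for $h=f,g,fg$. Since $g\xi\le\xi$ (partitions ordered by fineness), applying $f$ gives $fg\,\xi=f(g\xi)\le f\xi\le\xi$, so the chain rule for conditional entropy yields $h_{\mu}(fg)=H_{\mu}(\xi\mid fg\,\xi)=H_{\mu}(\xi\mid f\xi)+H_{\mu}(f\xi\mid fg\,\xi)$; as $fg\,\xi=f(g\xi)$ and $\mu$ is $f$-invariant, $H_{\mu}(f\xi\mid fg\,\xi)=H_{\mu}(\xi\mid g\xi)=h_{\mu}(g)$, whence $h_{\mu}(fg)=h_{\mu}(f)+h_{\mu}(g)$. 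Combined with Theorem~\ref{A}, this is the claimed equality.

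\emph{Main obstacle.} The technical heart is Step~2: carrying out the graph-transform/cone estimate that upgrades the (orbit-wise) identity $E^{u}(\cdot,f)=E^{u}(\cdot,g)$ to genuine exponential convergence of the backward $f$-orbit, in a setting where $f^{-1}$ has no control on its derivative, the phase space is not locally compact, and, lacking an inner product, one must measure vectors in point-dependent Lyapunov norms (Section~\ref{SEC4}). Once the unstable–manifold machinery of Sections~\ref{SEC4}--\ref{S6} is in place, Step~3 is essentially formal bookkeeping with conditional entropies.
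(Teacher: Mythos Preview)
Your three-step outline matches the paper's structure (Sections~\ref{MET}, \ref{S6}--\ref{S7}), and the endgame in Step~3 is essentially the same conditional-entropy chain rule the paper uses. Two points deserve comment.

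First, you misjudge where the weight falls. You flag Step~2 as the main obstacle, but the paper spends comparable effort on Step~3: the assertion ``by Rokhlin's entropy formula $h_\mu(h)=H_\mu(\xi\mid h\xi)$'' is, in the Banach setting, a theorem to be \emph{proved}, not cited. The paper constructs a specific partition $\eta=\bigvee_{n,k\ge0}\xi_{n,k}$ from a stack of local unstable leaves (Section~\ref{sub7.2}), shows it is $(f,g)$-decreasing with $\bigvee_{n}f^{-n}\eta$ and $\bigvee_{k}g^{-k}\eta$ both the point partition, and then establishes $h_\mu(f,\eta)=h_\mu(f)$ and $h_\mu(g,\eta)=h_\mu(g)$ (Lemma~\ref{entropy}) via an auxiliary finite-entropy partition $\cQ$ furnished by the box-dimension hypothesis~(H2)(ii). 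Simply writing $\xi=\xi_f\vee\xi_g$ does not bypass this work. There is also a small tactical difference: you aim for $h_\mu(fg)=H_\mu(\xi\mid fg\,\xi)$ directly, whereas the paper uses only the trivial bound $h_\mu(fg)\ge h_\mu(fg,\eta)=H_\mu((fg)^{-1}\eta\mid\eta)$, decomposes the right side by the chain rule, and invokes Theorem~\ref{A} for the reverse inequality---so it never needs the Rokhlin identity for $fg$ itself.

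Second, your Steps~1--2 lean on $f^{-1}$ and $Df^{-1}$ in ways the paper explicitly warns against: $f^{-1}$ is defined only on $A$, need not extend to a neighbourhood, and need not be differentiable even on $A$, so ``differentiating $g\circ f^{-1}=f^{-1}\circ g$'' or asserting ``$f^{-n}y$ lies on $W^u(f^{-n}x,g)$'' for $y\in w_\delta^u(x,g)\not\subset A$ is not a priori meaningful. The paper's fix is to work forward: Lemma~\ref{L63} shows $\widetilde f_x(W_\delta^u(x,g))\supset W_\delta^u(fx,g)$ via the graph transform (Lemma~\ref{L62}), which supplies the needed backward-in-time sequence without inverting $f$ off $A$; the characterization Lemma~\ref{L61} then gives $w_\delta^u(x,f)=w_\delta^u(x,g)$ (Theorem~\ref{T6}). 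Your cone/graph-transform sketch is the right idea but needs this forward reformulation to be rigorous in the present setting.
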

	
	\section{\textbf{ MET for commuting transformations}} \label{MET}
	In this section, we will construct an Oseledets splitting of the tangent space $\cB_x$ such that the Lyapunov exponents of $Df,Dg$ and $D(fg)$ are well-defined on the finite dimensional subspace, and can also be controlled on the finite co-dimensional subspace.
	
	\begin{definition}\label{D1}
		Let $X$ be a compact metric space and $\mu$  a Borel probability measure on $X$, and let $Z$ be a metric space. We say that a map $\Psi:X\rightarrow Z$ is $\mu$-continuous, if there is an increasing sequence $\{K_{n}\}_{n\in \N}$ of compact subsets of $X$, such that $\mu(\cup_{n}K_n)=1$ and $\Psi|_{K_n}$ is continuous for each $n$.
	\end{definition}
	
	Given $f,g,\cB,A$ satisfying the conditions (H1)-(H3) and  $\mu\in \cE(f,g,A)$. Put $h=f$ or $g$. By the sub-additive ergodic theorem, the following limit exists for $\mu$-almost every $x$:
	$$l_{\alpha}(x,h):=\lim_{n\rightarrow \infty}\frac{1}{n} \log |Dh^{n}_x|_{\alpha}.$$
	It follows from (H3) immediately that $l_{\alpha}(x,h)\leq l_{\alpha}(h)<\lambda_{\alpha}$ (recall the choice of $\lambda_{\alpha}$ in \eqref{eq:lammda-a}). We first recall the classical MET  for derivative cocycles on Banach space, see \cite{Thieullen87} for more details.
	
	\begin{theorem}\label{MET h}
		Let $\cB,h,A,\lambda_{\alpha}$ and $\mu$ be as in above. Then there exists a both $f$-invariant and $g$-invariant Borel subset $\Gamma$ with $\mu(\Gamma)=1$ such that for every $x\in \Gamma$, there are $r(x,h)$ numbers
		$$\lambda_1(x,h)>\lambda_2(x,h)>\cdots>\lambda_{r(x,h)}(x,h)>\lambda_{\alpha}$$
		and a unique splitting
		$$\cB_x=E_1(x,h) \bigoplus \cdots \bigoplus E_{r(x,h)}(x,h) \bigoplus E_{\alpha}(x,h)$$
		with the following properties:
		\begin{enumerate}
			\item[(a)] for $i=1,\cdots,r(x,h)$, $ \dim E_{i}(x,h)=m_i(x,h)$ is finite and $Dh_x E_{i}(x,h)=E_{i}(h(x),h)$, for every $v\in E_{i}(x,h)\setminus\{0\}$ we have
			$$
			\lim_{n\rightarrow \pm \infty} \frac{1}{n}\log |Dh^{n}_x v|=\lambda_{i}(x,h);
			$$
			\item[(b)] $E_{\alpha}(x,h)$ is closed and finite co-dimensional, satisfies that
			$Dh_x E_{\alpha}(x,h)$ $\subset E_{\alpha}(h(x),h)$ and
			$$ \limsup_{n\rightarrow \infty} \frac{1}{n}\log |Dh^{n}_x|_{E_{\alpha}(x,h)}|\leq \lambda_{\alpha}; $$
			\item[(c)] the map $x\mapsto E_{i}(x,h)$ ($\ i=1,\cdots,r(x,h)$, $\alpha$) is $\mu$-continuous;
			\item[(d)] let $\pi_{i}(x,h)$ ( $i=1,\cdots,r(x,h)$, $\alpha$) denote  the projection of $\cB_x $ onto $ E_{i}(x,h) $ via the splitting, we have
			$$\lim_{n\rightarrow \pm\infty} \frac{1}{n}\log |\pi_{i}(h^{n}(x),h)|=0.$$
		\end{enumerate}
	\end{theorem}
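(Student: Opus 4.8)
The plan is to read off the splitting and properties (a)--(d) from the classical Banach-space multiplicative ergodic theorem of Thieullen \cite{Thieullen87} (compare also \cite{Mane81}) applied to the single map $h$, and then to shrink the full-measure set on which it holds to one that is invariant under both $f$ and $g$. First I would check the hypotheses of that theorem for the measure-preserving system $(A,h,\mu)$: since $\mu\in\cE(f,g,A)\subset\cM(f,A)\cap\cM(g,A)$ it is $h$-invariant for $h=f,g$ or $fg$; by (i) of (H1) the map $h|_A$ is a continuous bijection of a compact set, hence a homeomorphism, so it is invertible; by (ii) of (H1) the derivative cocycle $x\mapsto Dh_x$ is injective, and as $h$ is $C^2$ it is continuous on the compact set $A$, so $\log^+|Dh_x|$ is bounded and in particular $\mu$-integrable; finally the exponent of the Kuratowski measure of noncompactness at $\mu$-a.e.\ $x$ is $l_{\alpha}(x,h)=\lim_n\frac1n\log|Dh^{n}_x|_{\alpha}\le l_{\alpha}(h)<\lambda_{\alpha}$ by (H3) and \eqref{eq:lammda-a}. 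Thieullen's theorem then applies, and I would use the freedom in the spectral cut to separate exactly the Lyapunov exponents lying strictly above the level $\lambda_{\alpha}$.

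This produces an $h$-invariant Borel set $\Gamma_0\subset A$ of full $\mu$-measure carrying finitely many numbers $\lambda_1(x,h)>\cdots>\lambda_{r(x,h)}(x,h)>\lambda_{\alpha}$ and the unique $Dh$-invariant splitting $\cB_x=\bigoplus_{i}E_i(x,h)\oplus E_{\alpha}(x,h)$, with each $E_i(x,h)$ finite dimensional, $Dh_xE_i(x,h)=E_i(h(x),h)$, and forward exponent $\lambda_i(x,h)$ on $E_i(x,h)\setminus\{0\}$; with $E_{\alpha}(x,h)$ closed, finite codimensional, forward-invariant and satisfying $\limsup_n\frac1n\log|Dh^{n}_x|_{E_{\alpha}(x,h)}|\le\lambda_{\alpha}$; and with the projections along the splitting subexponential. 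To get the two-sided limits in (a) and (d) I would note that the multiplicities $m_i(\cdot,h)$ are $h$-invariant, so each $Dh_x|_{E_i(x,h)}\colon E_i(x,h)\to E_i(h(x),h)$ is an isomorphism of finite dimensional spaces of the same dimension; hence $Dh^{-n}_x$ and the backward-time projections make sense on these finite dimensional bundles even though $h^{-1}$ need not be differentiable, and the backward exponents are computed along the backward $h$-orbit of $x$, which exists because $h|_A$ is invertible. Property (c) I would get from the measurability of $x\mapsto E_i(x,h)$ and $x\mapsto\lambda_i(x,h)$ built into the construction, combined with inner regularity of $\mu$ on the compact metric space $A$: exhaust by an increasing sequence of compact sets, of total measure one, on which these maps restrict continuously.

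Finally, to upgrade invariance I would use that $fg=gf$ and that $f|_A,g|_A$ are invertible, so they generate a countable group $G$ of homeomorphisms of $A$; since $\mu$ is $f$- and $g$-invariant it is $G$-invariant, whence $\mu(\gamma\Gamma_0)=1$ for every $\gamma\in G$ and $\Gamma:=\bigcap_{\gamma\in G}\gamma(\Gamma_0)$ is a Borel set of full measure that is invariant under both $f$ and $g$. Properties (a), (b), (d) hold for every point of $\Gamma_0$, hence of $\Gamma$; for (c) I would intersect the compact exhausting sets with $\Gamma$ and, if needed, shrink them inside $\Gamma$ to restore compactness without losing full measure, again by inner regularity; uniqueness of the splitting is inherited from Thieullen's theorem. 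I expect the only genuinely delicate parts to be the handling of the non-invertibility of $h$ --- giving meaning to $Dh^{-n}_x$ on the finite dimensional Oseledets subbundles and justifying the backward-time statements --- together with making sure the noncompactness exponent of the cocycle really lies below $\lambda_{\alpha}$, which is precisely what (H3) and \eqref{eq:lammda-a} are arranged to guarantee; the passage to an $\{f,g\}$-invariant full-measure set is then a routine countable intersection.
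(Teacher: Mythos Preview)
Your proposal is correct and matches the paper's treatment: the paper does not prove Theorem~\ref{MET h} but simply quotes it as the classical Banach-space MET of Thieullen \cite{Thieullen87}, and your outline is exactly the hypothesis-checking and full-measure-set adjustment needed to read the statement off from that reference. The only point worth adding is that property (c) requires no separate Lusin-type argument here, since the paper observes (in the remark following the theorem) that $\mu$-continuity is equivalent to Borel measurability by \cite[Theorem~4.1]{Kupka84}.
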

	\begin{remark}
		If $\mu$ is $h$-ergodic, it is well-known that $r(x,h)$, $\lambda_i(x,h)$  and $m_i(x,h)$ ($i=1,\cdots,r(x,h)$) are constant $\mu$-almost everywhere. In Section \ref{setting}, we stressed that $\mu\in \cE(f,g,A)$ does not imply $\mu$ is $f$-ergodic or $g$-ergodic. But we will show that all of these functions are constants $\mu$-almost everywhere if  $\mu$ is only $(f,g)$-ergodic.
	\end{remark}
	
	\begin{remark}
		Note that for every metric space $Z$, $\mu$-continuous is equivalent with Borel measurability (see \cite[Theorem 4.1]{Kupka84}).
	\end{remark}
	
	For a given measurable positive function $\psi$ and a measure $\mu$, we call $\psi$ is ($\mu$-)\textbf{temperate} with respect to $h$, if
	$$\lim_{n\rightarrow \pm \infty} \frac{1}{n}\log \psi\circ h^n=0 \quad \mu\text{-}a.e.$$
	and we call $\psi$ is $\varepsilon$-\textbf{slowly} with respect to $h$, if
	$$\psi(h^{\pm}x)\leq e^{\varepsilon}\psi(x)\quad \mu\text{-}a.e. $$
	where $\varepsilon>0$ and $h=f$ or $g$.
	
	The following lemmas are useful in the proof of our main results, see Lemmas 8 and 9 in \cite{Walters93} for the first result, and Lemma 5.4 in \cite{Young17} for the second result.

	\begin{lemma} \label{Tem}
		Let $T: X\to X $ be an invertible measure preserving transformation of a probability space $(X,\mu)$, and let $\psi:X\rightarrow \R$ be a measurable function. Assume that either $(\psi\circ T-\psi)^{+}$ or $(\psi\circ T-\psi)^{-}$ is integrable, then
		$$\lim_{n\rightarrow \pm \infty} \frac{1}{n} \psi\circ T^n(x)=0 \quad \mu\text{-}a.e. \ x\in X.$$
	\end{lemma}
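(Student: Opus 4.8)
The plan is to deduce this from Birkhoff's ergodic theorem together with the Poincar\'e recurrence theorem, after two routine reductions. Set $\phi:=\psi\circ T-\psi$. Replacing $\psi$ by $-\psi$ interchanges $\phi^{+}$ and $\phi^{-}$ while changing neither the hypothesis nor the conclusion, so I may assume $\phi^{+}\in L^{1}(\mu)$; and replacing $T$ by $T^{-1}$ (again measure preserving, with $\psi\circ T^{-1}-\psi=-\phi\circ T^{-1}$, so that one-sided integrability is preserved) reduces the limit as $n\to-\infty$ to the limit as $n\to+\infty$. Thus it suffices to show that $\phi^{+}\in L^{1}(\mu)$ implies $\tfrac1n\,\psi\circ T^{n}\to 0$ $\mu$-a.e.\ as $n\to+\infty$.

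By the ergodic decomposition I may further assume $T$ is ergodic (alternatively one runs the argument below verbatim, invoking the ergodic theorem for nonnegative measurable functions in place of Birkhoff's theorem). Starting from the telescoping identity $\psi\circ T^{n}-\psi=\sum_{k=0}^{n-1}\phi\circ T^{k}=\sum_{k=0}^{n-1}\phi^{+}\circ T^{k}-\sum_{k=0}^{n-1}\phi^{-}\circ T^{k}$, Birkhoff's theorem gives $\tfrac1n\sum_{k=0}^{n-1}\phi^{+}\circ T^{k}\to\int\phi^{+}\,d\mu<\infty$ a.e.; applying Birkhoff to the truncations $\phi^{-}\wedge M\in L^{1}(\mu)$ and letting $M\to\infty$ gives $\tfrac1n\sum_{k=0}^{n-1}\phi^{-}\circ T^{k}\to\int\phi^{-}\,d\mu\in[0,+\infty]$ a.e. Since $\tfrac1n\psi(x)\to 0$ for $\mu$-a.e.\ $x$, it follows that $\tfrac1n\,\psi\circ T^{n}\to c$ $\mu$-a.e., where $c:=\int\phi^{+}\,d\mu-\int\phi^{-}\,d\mu\in[-\infty,+\infty)$ is a constant.

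It remains to rule out $c\neq 0$, which is precisely where recurrence enters. If $c>0$, then $\psi(T^{n}x)>\tfrac{c}{2}n$ for all large $n$ at $\mu$-a.e.\ $x$, so $\psi\circ T^{n}\to+\infty$ $\mu$-a.e.; but $\{\psi<M\}$ has positive measure for $M$ large enough (as $\psi$ is finite $\mu$-a.e.), and the Poincar\'e recurrence theorem forces $\mu$-a.e.\ point of that set to return to it infinitely often, so $\psi(T^{n}x)<M$ infinitely often for those $x$ --- a contradiction. If $c<0$ (including $c=-\infty$) the same argument applied to $\{\psi>-M\}$ gives a contradiction. Hence $c=0$, i.e.\ $\tfrac1n\,\psi\circ T^{n}\to 0$ $\mu$-a.e.; integrating over the ergodic components yields the assertion. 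The only genuinely delicate point is that $\psi$ need not be $\mu$-integrable (in the applications it is a logarithm of an operator norm and typically fails to be in $L^{1}$), which is exactly why one cannot shortcut by writing the Birkhoff limit of the coboundary $\phi$ as $\overline{\psi\circ T}-\overline{\psi}=0$; securing the existence of the extended-real limit $c$ and then pinning it down with Poincar\'e recurrence is what substitutes for this.
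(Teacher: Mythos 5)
Your proof is correct, and it is worth emphasizing that the paper itself does not prove this lemma---it simply cites Lemmas 8 and 9 of Walters (1993), so your argument is a self-contained substitute rather than a reproduction of the paper's reasoning. The two reductions (replace $\psi$ by $-\psi$; replace $T$ by $T^{-1}$, noting $(\psi\circ T^{-1}-\psi)^{-}=\phi^{+}\circ T^{-1}$) are clean, the telescoping identity $\psi\circ T^{n}-\psi=\sum_{k=0}^{n-1}\phi\circ T^{k}$ is right, and splitting into $\phi^{+}$ and $\phi^{-}$ with the truncation trick for the non-integrable part correctly yields an a.e.\ limit $c\in[-\infty,+\infty)$. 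The decisive step---using Poincar\'e recurrence to a set $\{\psi<M\}$ (resp.\ $\{\psi>-M\}$) of positive measure to rule out $c>0$ (resp.\ $c<0$, including $c=-\infty$)---is exactly the device that compensates for $\psi$ possibly failing to be in $L^{1}$, where the naive ``coboundary has zero Birkhoff mean'' shortcut is unavailable; you identify this issue explicitly, which is good. A small remark: the ergodic-decomposition reduction you lead with needs the underlying probability space to be standard, but your parenthetical fallback---run the argument with $c(x)=\mathbb{E}[\phi^{+}\mid\mathcal{I}](x)-\mathbb{E}[\phi^{-}\mid\mathcal{I}](x)$ and apply Poincar\'e recurrence to the invariant sets $\{c>0\}\cap\{\psi<M\}$ and $\{c<0\}\cap\{\psi>-M\}$---avoids this and is fully general, so the proof stands without any hypothesis on the base space. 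This is in the same spirit as Walters' approach, though you have organized it around Poincar\'e recurrence rather than the maximal-ergodic machinery that a more textbook treatment might use.
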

	\begin{lemma}\label{slowly}
    Let $T: X\to X $ be an invertible measure preserving transformation of a probability space $(X,\mu)$, and let $\psi:X\rightarrow \R$ be a measurable and temperate function.
		  Then, for every $\varepsilon>0$, there exists a  measurable function ${\psi}':X\rightarrow [1,\infty)$ such that
		${\psi}'(x)\geq \psi(x)$ and
		${\psi}'(T^{\pm}x)\leq e^{\varepsilon}{\psi}'(x)$ for $\mu$-almost every  $x\in X$.
	\end{lemma}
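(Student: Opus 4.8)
\textbf{Proof proposal for Lemma \ref{slowly}.}

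The plan is to define $\psi'$ by the standard sup-trick: set
\begin{equation*}
	\psi'(x)=\sup_{n\in\Z} e^{-\varepsilon|n|}\max\{\psi(T^n x),1\}.
\end{equation*}
Write $\phi(x)=\max\{\psi(x),1\}\geq 1$, so $\psi'(x)=\sup_{n\in\Z} e^{-\varepsilon|n|}\phi(T^n x)$. The supremum is taken over a set that includes $n=0$, hence $\psi'(x)\geq\phi(x)\geq\max\{\psi(x),1\}$, giving both $\psi'(x)\geq\psi(x)$ and $\psi':X\to[1,\infty)$ immediately. First I would check $\psi'$ is well-defined and finite $\mu$-a.e.: since $\psi$ is temperate, $\lim_{n\to\pm\infty}\frac{1}{n}\log\phi(T^n x)=0$ $\mu$-a.e., so for a.e.\ $x$ there is $N(x)$ with $\log\phi(T^n x)\leq\frac{\varepsilon}{2}|n|$ for $|n|\geq N(x)$; thus $e^{-\varepsilon|n|}\phi(T^n x)\leq e^{-\varepsilon|n|/2}\to 0$ and only finitely many terms can exceed any threshold, so the sup is attained and finite. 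Measurability of $\psi'$ is clear as a countable supremum of measurable functions.

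Next I would verify the slowly-varying bound. Fix $x$ in the full-measure invariant set where everything above holds. For any $m\in\Z$,
\begin{equation*}
	e^{-\varepsilon|m|}\phi(T^m(Tx))=e^{-\varepsilon|m|}\phi(T^{m+1}x)=e^{\varepsilon}\cdot e^{-\varepsilon(|m|+1)}\phi(T^{m+1}x)\leq e^{\varepsilon}\cdot e^{-\varepsilon|m+1|}\phi(T^{m+1}x),
\end{equation*}
using $|m|+1\geq|m+1|$. Taking the supremum over $m\in\Z$ on the left and noting that $\{m+1:m\in\Z\}=\Z$ on the right yields $\psi'(Tx)\leq e^{\varepsilon}\psi'(x)$. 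The same computation with $T^{-1}$ in place of $T$ (or equivalently, since the index set $\Z$ is symmetric, replacing $m$ by $m-1$) gives $\psi'(T^{-1}x)\leq e^{\varepsilon}\psi'(x)$. Hence $\psi'(T^{\pm}x)\leq e^{\varepsilon}\psi'(x)$ for $\mu$-a.e.\ $x$, as required.

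The only genuine point requiring care is that the defining set $\Lambda$ on which $\psi'$ is finite and the inequalities hold can be chosen $T$-invariant with $\mu(\Lambda)=1$: the temperateness hypothesis $\lim_{n\to\pm\infty}\frac1n\log\phi(T^n x)=0$ holds on a $T$-invariant full-measure set, and on that set the above estimates are purely pointwise and automatically pass to iterates, so no further argument is needed. I expect no real obstacle here; the lemma is a routine adaptation of the Pesin/Oseledets "tempering kernel'' construction to the present setting, and the choice of $\phi=\max\{\psi,1\}$ rather than $\psi$ itself is what guarantees $\psi'\geq 1$ even when $\psi$ takes small or negative values.
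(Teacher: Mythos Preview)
Your proof is correct and is the standard tempering-kernel (Pesin) construction. The paper does not give its own proof of this lemma but simply cites it as Lemma~5.4 in \cite{Young17}, where the argument is essentially the one you have written.
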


    \subsection{Invariant splitting}
    We first prove the Oseledet splitting of $f$ is also $Dg$-invariant, and so it is both $Df$-invariant and $Dg$-invariant.
	Let $ m(x,f)=m_1(x,f)+\cdots+m_{r(x,f)}(x,f)$ and
	$$E(x,f)=E_1(x,f) \bigoplus \cdots  \bigoplus E_{r(x,f)}(x,f).$$

	\begin{lemma}\label{L1}
		For every $x\in \Gamma$ and $v\in E(x,f)\setminus\{0\}$, let
		$$\lambda(x,v,f)=\lim_{n\rightarrow \infty}\frac{1}{n} \log|Df^n_xv|$$
		if the limit exists. Then we have $\lambda(g(x),Dg_xv,f)=\lambda(x,v,f)$. %for any $v\in E(x,f)\setminus\{0\}$.
	\end{lemma}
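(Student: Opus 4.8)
The plan is to exploit the commutativity $fg = gf$ at the level of derivatives, which gives $Df^n_{g(x)} \circ Dg_x = Dg_{f^n(x)} \circ Df^n_x$ for every $n$, and then to control the distortion introduced by $Dg_x$ using the temperateness of the projections from Theorem \ref{MET h}(d). First I would fix $x \in \Gamma$ and $v \in E(x,f)\setminus\{0\}$; since $v$ lies in the finite-dimensional part $E(x,f)$, we may decompose $v = \sum_{i} v_i$ with $v_i \in E_i(x,f)$, and $\lambda(x,v,f) = \max\{\lambda_i(x,f) : v_i \neq 0\}$ by property (a). Applying the commutation relation, $Df^n_x v$ is carried by $Dg_{f^n(x)}$ to $Df^n_{g(x)}(Dg_x v)$, so it suffices to estimate $|Df^n_{g(x)}(Dg_x v)|$ and then compare with $|Df^n_x v|$ via bounds on $|Dg_{f^n(x)}|$ and on the norm of its inverse restricted to the relevant subspace.

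The key steps, in order, are: (1) Show that $Dg_x$ maps $E(x,f)$ into $E(g(x),f)$; this follows because $Dg_x$ commutes with the $Df$-cocycle, hence sends each Oseledets subspace $E_i(x,f)$ into the subspace of $E(g(x),f)$ with the same exponent — indeed, for $w \in E_i(x,f)$, the vector $Dg_x w$ satisfies $\limsup \frac1n \log |Df^n_{g(x)} Dg_x w| = \limsup \frac1n \log |Dg_{f^n(x)} Df^n_x w| \le \lambda_i(x,f)$ by temperateness of $|Dg_{f^n(x)}|$ along the orbit (which holds since $\log^+|Dg|$ is bounded on the compact set $A$, so $\frac1n \log|Dg_{f^n(x)}| \to 0$), and a symmetric lower bound using injectivity of $Dg_x$ and the temperate behavior of $|(Dg_{f^n(x)})^{-1}|$ on the finite-dimensional image. (2) Conclude that $Dg_x v$ has a well-defined forward Lyapunov exponent equal to $\lambda(x,v,f)$, because $Dg_x$ preserves the grading by exponents: if $v_i \neq 0$ then $Dg_x v_i \neq 0$ (injectivity) and lands in the $\lambda_i$-subspace of $E(g(x),f)$. (3) Read off $\lambda(g(x), Dg_x v, f) = \max\{\lambda_i : Dg_x v_i \neq 0\} = \max\{\lambda_i : v_i \neq 0\} = \lambda(x,v,f)$.

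The main obstacle is Step (1), specifically the lower bound $\liminf \frac1n \log |Df^n_{g(x)} Dg_x w| \ge \lambda_i(x,f)$ for $w \in E_i(x,f)$: one needs to undo the action of $Dg_{f^n(x)}$, and although $\sup_{A}|Dg|<\infty$ controls the upper direction for free, bounding $|Df^n_x w|$ in terms of $|Dg_{f^n(x)} Df^n_x w| = |Df^n_{g(x)} Dg_x w|$ requires that $Dg_{f^n(x)}$ not contract the finite-dimensional vector $Df^n_x w$ too severely. The clean way around this is to work with the projections $\pi_i$ and their temperateness (property (d)): project onto $E_i(f^n(x),f)$, use that $Df^n_x w \in E_i(f^n(x),f)$ already, and use that $Dg_{f^n(x)}$ restricted to the finite-dimensional $E_i(f^n(x),f)$ has an inverse whose norm grows subexponentially along the orbit — this is where condition (H4) or simply the $\mu$-continuity plus Lusin-type arguments from Theorem \ref{MET h} enter. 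Alternatively, and more simply, observe that $Dg_x v \in E(g(x),f)$ from the upper bound alone together with the injectivity of $Dg_x$ forcing $Dg_x v \notin \bigoplus_{\lambda_j < \lambda_i} E_j$, which already pins down the exponent without a quantitative lower estimate.
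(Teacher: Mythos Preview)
Your overall architecture is right: from $Df^n_{g(x)}\circ Dg_x = Dg_{f^n(x)}\circ Df^n_x$ and $\sup_A|Dg|<\infty$ you get the upper bound $\limsup\frac1n\log|Df^n_{g(x)}Dg_xv|\le\lambda(x,v,f)$ for free, and the real issue is the lower bound, i.e.\ controlling how much $Dg_{f^n(x)}$ can contract on $E(f^n(x),f)$. You also correctly locate this as the obstacle. But neither of your two proposed ways around it works.

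The appeal to (H4) is circular. Condition (H4) applies only to sub-bundles that are \emph{both} $Df$-invariant and $Dg$-invariant, and at this point in the argument you do not yet know that $E(\cdot,f)$ is $Dg$-invariant; that is the content of Lemma~\ref{L2}, which in the paper is proved \emph{using} Lemma~\ref{L1}. (The paper is explicit that (H4) enters only later, in Corollary~\ref{C3}.) The vaguer ``$\mu$-continuity plus Lusin'' suggestion does not by itself produce the almost-everywhere temperateness statement you need along the full $f$-orbit.

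Your ``alternative'' argument at the end is simply wrong: injectivity of $Dg_x$ only says $Dg_xv\neq0$; it does not prevent $Dg_x$ from mapping a vector in $E_i(x,f)$ entirely into $\bigoplus_{\lambda_j<\lambda_i}E_j(g(x),f)\oplus E_\alpha(g(x),f)$. The upper bound places $Dg_xw$ in the filtration piece of exponent $\le\lambda_i$, but nothing you have written rules out strictly smaller exponent.

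What the paper actually does for the lower bound is different and self-contained. Set $m(x,g,E):=\inf_{0\neq v\in E(x,f)}|Dg_xv|/|v|>0$ (positive because $Dg_x$ is injective on a finite-dimensional space). From $Dg_{f(x)}|_{E(f(x),f)}=Df_{g(x)}\circ Dg_x\circ(Df_x|_{E(x,f)})^{-1}$ one gets
\[
\bigl(\log m(f(x),g,E)-\log m(x,g,E)\bigr)^{+}\le \log C_0+\log^{+}\bigl|(Df_x|_{E(x,f)})^{-1}\bigr|,
\]
so by Lemma~\ref{Tem} it suffices to show $\log^{+}|(Df_x|_{E(x,f)})^{-1}|\in L^1(\mu)$. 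This is where the real work is: the paper bounds $|(Df_x|_{E(x,f)})^{-1}|$ by a constant times $(\det(Df_x|_{E(x,f)}))^{-1}$, shows $\phi(x):=\log\det(Df_x|_{E(x,f)})$ has $\phi^{+}\in L^1$ trivially, and then uses the Oseledets exponents of $f$ on $E(x,f)$ (all $>\lambda_\alpha$) together with Birkhoff to bound the ergodic average of $\phi$ below by $m\lambda_\alpha>-\infty$, forcing $\phi^{-}\in L^1$. Once $\frac1n\log m(f^n(x),g,E)\to0$ is established, the sandwich
\[
m(f^n(x),g,E)\,|Df^n_xv|\le|Df^n_{g(x)}Dg_xv|\le C_0\,|Df^n_xv|
\]
immediately gives $\lambda(g(x),Dg_xv,f)=\lambda(x,v,f)$. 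Note that this argument never needs to know where $Dg_xv$ lands in the Oseledets splitting at $g(x)$; that comes afterwards, in Lemma~\ref{L2}.
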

	
	\begin{proof}
		Let $C_0=\max\{\sup_{x\in A}|Df_x|,\sup_{x\in A}|Dg_x|\}<\infty$, and put
		$$m(x,g,E):=m(Dg_x|_{E(x,f)})=\inf_{v\in E(x,f)\setminus\{0\}}\dfrac{|Dg_x v|}{|v|}$$
		then one has that $m(x,g,E)>0$ for every $x\in \Gamma$,  since $Dg_x$ is injective and $E(x,f)$ is finite dimensional.
		
		{\bf We claim that for $\mu$-almost every $x\in \Gamma$
		\begin{equation}\label{c-lz}
		\lim_{n\rightarrow \pm \infty} \frac{1}{n} \log m(f^{n}(x),g,E)=0.
		\end{equation}}
		
     We proceed to prove the lemma by assuming that the claim is true. By definition, we have $m(x,g,E)|v|\leq |Dg_x v| \leq C_0|v|$ for every $x\in\Gamma$ and $v\in E(x,f)$, this together with the community of $f,g$ yield that
		\begin{align*}
		m(f^{n}(x),g,E)|Df^n_x v| \leq |Dg_{f^{n}(x)} Df^n_x v|=|Df^n_{g(x)}Dg_x v|\leq C_0 |Df^n_x v|
		\end{align*}
	 It follows from the claim immediately that $\lambda(g(x),Dg_xv,f)=\lambda(x,v,f)$ for $\mu$-almost every $x\in \Gamma$.

     To finish the proof of the lemma, it suffices to prove the above claim.

     \noindent	{\it Proof of the claim:} Let $\Gamma(m)=\{x\in \Gamma: m(x,f)=m\}$, then $\Gamma=\bigcup_{m\in \N} \Gamma(m)$ and $\Gamma(m)$ is  $f$-invariant  for every $m\in \N$. Take a positive integer $m\in \N$ with $\mu(\Gamma(m))>0$, let $\mu_m$ be the normalized measure of $\mu$ restricted on $\Gamma(m)$. Hence, $\mu_m$ is an $f$-invariant probability measure supported on $\Gamma(m)$.
		Note that
		$$Dg_{f(x)}|_{E(f(x),f)}=Df_{g(x)} \circ Dg_{x} \circ (Df_x |_{E(x,f)})^{-1},$$
		it is easy to show that
		$m(f(x),g,E)\leq |Df_{g(x)}|\cdot m(x,g,E) \cdot |(Df_x |_{E(x,f)})^{-1}|$. Consequently, one can show that
		\begin{eqnarray}\label{c-ad1}
        (\log m(f(x),g,E)-\log m(x,g,E))^{+}\leq \log C_0 + \log^{+} |(Df_x |_{E(x,f)})^{-1}|.
        \end{eqnarray}
		To complete the proof of the lemma, it suffices to show that \eqref{c-lz} holds for $\mu$-a.e. $x\in \Gamma(m)$. By Lemma \ref{Tem} and \eqref{c-ad1}, it suffices to show that
		\begin{eqnarray}\label{c-ad2}
        \int \log^{+} |(Df_x |_{E(x,f)})^{-1}| d\mu_m(x)<\infty.
        \end{eqnarray}
		For every $x\in \Gamma(m) $ and every $v\in E(x,f)$ with $|v|=1$,  by \cite[Lemma 3.11]{Young17} we have  that
		\begin{eqnarray}\label{c-lzz}
        \det(Df_x |_{E(x,f)}) \leq m^{m/2} |Df_x |_{E(x,f)} |^{m-1} |Df_x v|
        \end{eqnarray}
        one can see \cite[Section 3.2]{Young17}  for the definition of the determinant of a linear operator on Banach spaces. This together with the fact $ |Df_x |_{E(x,f)} |\leq C_0$ yield that
		$$|(Df_x |_{E(x,f)})^{-1}|\leq C_m \cdot (\det(Df_x |_{E(x,f)}))^{-1}$$
		where $C_m=m^{m/2} \cdot C_0^{m-1}$.

        Put $\phi(x)=\log \det(Df_x |_{E(x,f)})$. To show \eqref{c-ad2}, it suffices to prove that $ \phi\in L^{1}(\mu_m)$. Since $\det(Df_x |_{E(x,f)})\leq m^{m/2} C^{m}_0$ by \eqref{c-lzz}, one has $\phi^{+}\in L^1(\mu_m)$. It remains to show that $\phi^{-}\in L^1(\mu_m)$. By the Birkhoff's ergodic theorem, there exists a measurable function $\hat{\phi}:\Gamma_m\rightarrow [-\infty,\infty)$ with $\hat{\phi}^{+}\in L^1(\mu_m)$ such that
		$$ \hat{\phi}(x)=\lim_{n\rightarrow \infty} \frac{1}{n}\sum_{k=0}^{n-1} \phi(f^i(x))=\lim_{n\rightarrow \infty} \frac{1}{n}\sum_{k=0}^{n-1} \phi(f^{-i}(x))$$
		for $\mu_m$-almost every $x\in \Gamma(m)$. Note that
		\begin{align*}
		\hat{\phi}(x)&=\lim_{n\rightarrow \infty} \frac{1}{n}\sum_{k=0}^{n-1} \phi(f^{-i}(x))\\
                     &=\lim_{n\rightarrow \infty}  \frac{1}{n} \sum_{k=0}^{n-1}\log \det(Df_{f^{-i}(x)} |_{E(f^{-i}(x),f)})\\
		             &=\lim_{n\rightarrow \infty} \frac{1}{n} \log \det(Df^n_{f^{-n}(x)}|_{E(f^{-n}(x),f)})\\
		             &=\lim_{n\rightarrow \infty} -\frac{1}{n} \log \det(Df^{-n}_{x}|_{E(x,f)})
		\end{align*}
		and, by \cite[Lemma 3.11]{Young17} one can choose $\{v_1,\cdots,v_m \}$ to be an unit basis for $E(x,f)$ such that
		$$\det(Df^{-n}_{x}|_{E(x,f)})\leq m^{m/2} \prod_{i=1}^{m}|Df^{-n}_{x} v_i|.$$
		By Theorem \ref{MET h} (a), we have that
		\begin{align*}
		\hat{\phi}(x)&\geq \lim_{n\rightarrow \infty} -\frac{1}{n} \log \prod_{i=1}^{m}|Df^{-n}_{x} v_i|\\
		             &=\sum_{i=1}^{m} \lim_{n\rightarrow \infty} -\frac{1}{n} \log |Df^{-n}_{x} v_i|>  m\lambda_{\alpha}>-\infty.
		\end{align*}
	    Then, we have that $\int \hat{\phi}(x) dx>-\infty$.
		Note that $\int \hat{\phi}(x) dx= \int \phi(x) dx$. Hence, we have $\phi(x) \in L^1(\mu_m).$

        This completes the proof of this lemma.
	\end{proof}
	
	Actually, the following result is proved in the proof of the above claim.
	\begin{lemma}\label{L1.5}
		Given a finite dimensional $Dh$-invariant sub-bundle $V$ (here $h$ is $f$ or $g$), if there exists a number $\lambda>-\infty$ such that for $\mu$-almost every $x$
		$$\lim_{n\rightarrow \pm \infty} \frac{1}{n} \log |Dh^{n}_{x} v|\geq \lambda$$
		for any $v\in V(x)\setminus\{0\}$. Then, we have $\int \log^{+} |(Dh_x|_{V(x)})^{-1}|d\mu<\infty$.
	\end{lemma}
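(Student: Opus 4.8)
As the remark above indicates, the statement is essentially contained in the proof of the claim inside Lemma~\ref{L1}; the plan is to repeat that computation with $f$ replaced by $h$ and $E(x,f)$ replaced by $V(x)$. First I would reduce to constant fibre dimension. Let $\Gamma_0$ be the full-measure set on which $V$ is defined with $\dim V(x)<\infty$, and set $\Gamma(m)=\{x\in\Gamma_0:\dim V(x)=m\}$. Since $Dh_x$ is injective and $Dh_xV(x)=V(h(x))$, the map $x\mapsto\dim V(x)$ is $h$-invariant, so each $\Gamma(m)$ is $h$-invariant and $\Gamma_0=\bigcup_m\Gamma(m)$. For every $m$ with $\mu(\Gamma(m))>0$ let $\mu_m$ be the normalization of $\mu|_{\Gamma(m)}$, an $h$-invariant probability measure on $\Gamma(m)$; the hypothesis on $V$ still holds $\mu_m$-a.e. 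It then suffices to bound $\int\log^{+}|(Dh_x|_{V(x)})^{-1}|\,d\mu_m$ for each such $m$ and to add up the results.

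Fix such an $m$ and put $C_0=\sup_{x\in A}|Dh_x|<\infty$. By the determinant inequality \cite[Lemma 3.11]{Young17}, for every unit vector $v\in V(x)$,
$$\det(Dh_x|_{V(x)})\le m^{m/2}\,|Dh_x |_{V(x)} |^{m-1}\,|Dh_x v|\le m^{m/2}C_0^{m-1}\,|Dh_x v|,$$
so $|(Dh_x|_{V(x)})^{-1}|\le C_m\bigl(\det(Dh_x|_{V(x)})\bigr)^{-1}$ with $C_m=m^{m/2}C_0^{m-1}$. Hence it is enough to show that $\phi(x):=\log\det(Dh_x|_{V(x)})$ lies in $L^{1}(\mu_m)$. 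From $\det(Dh_x|_{V(x)})\le m^{m/2}C_0^{m}$ we get $\phi^{+}\in L^{1}(\mu_m)$, so the whole issue is to control $\phi^{-}$.

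For this I would argue exactly as in Lemma~\ref{L1}. By Birkhoff's ergodic theorem (valid with values in $[-\infty,\infty)$ since $\phi^{+}\in L^{1}$) there is $\hat\phi$ with $\int\hat\phi\,d\mu_m=\int\phi\,d\mu_m$, and using multiplicativity of the determinant together with $Dh^{n}_{h^{-n}(x)}|_{V(h^{-n}(x))}=(Dh^{-n}_{x}|_{V(x)})^{-1}$,
$$\hat\phi(x)=\lim_{n\to\infty}\Bigl(-\frac{1}{n}\log\det\bigl(Dh^{-n}_{x}|_{V(x)}\bigr)\Bigr)\qquad\mu_m\text{-a.e.}$$
By \cite[Lemma 3.11]{Young17} again, for each $n$ pick a unit basis $v^{(n)}_1,\dots,v^{(n)}_m$ of $V(x)$ with $\det(Dh^{-n}_{x}|_{V(x)})\le m^{m/2}\prod_i|Dh^{-n}_{x}v^{(n)}_i|$, so $|Dh^{-n}_{x}v^{(n)}_i|\le|Dh^{-n}_{x} |_{V(x)} |$. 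Fixing a measurable unit basis $e_1(x),\dots,e_m(x)$ of $V(x)$ and expanding an arbitrary unit vector of $V(x)$ in it (here finite dimensionality of $V(x)$ is used) gives
$$\limsup_{n\to\infty}\frac{1}{n}\log|Dh^{-n}_{x} |_{V(x)} |\ \le\ \max_{1\le j\le m}\lim_{n\to\infty}\frac{1}{n}\log|Dh^{-n}_{x}e_j(x)|\ \le\ -\lambda,$$
where the last inequality is precisely the hypothesis $\lim_{n\to-\infty}\frac{1}{n}\log|Dh^{n}_{x}v|\ge\lambda$ read at $v=e_j(x)$. Combining the three displays yields $\hat\phi(x)\ge m\lambda>-\infty$, hence $\int\phi\,d\mu_m=\int\hat\phi\,d\mu_m\ge m\lambda>-\infty$; with $\phi^{+}\in L^{1}(\mu_m)$ this gives $\phi\in L^{1}(\mu_m)$, whence $\int\log^{+}|(Dh_x|_{V(x)})^{-1}|\,d\mu_m\le\log C_m+\int|\phi|\,d\mu_m<\infty$.

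Finally $\int\log^{+}|(Dh_x|_{V(x)})^{-1}|\,d\mu=\sum_m\mu(\Gamma(m))\int\log^{+}|(Dh_x|_{V(x)})^{-1}|\,d\mu_m$, which is finite as soon as $\dim V$ is essentially bounded; this is the case in every application of the lemma (for instance, when $\lambda>\lambda_{\alpha}$ the bundle $V$ is contained in the finite-dimensional Oseledets part $E(\cdot,h)$ of Theorem~\ref{MET h}, whose dimension is $\mu$-a.e.\ constant by the first remark following that theorem, so only finitely many $\Gamma(m)$ carry mass). The step I expect to require the most care is the finite-dimensional uniformity estimate for $\frac{1}{n}\log|Dh^{-n}_{x} |_{V(x)} |$ displayed above: one has to pass from the pointwise hypothesis on individual vectors to a bound on the whole operator $Dh^{-n}_x|_{V(x)}$, which in turn is what controls the $n$-dependent bases $v^{(n)}_i$. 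Everything else is a line-by-line copy of the proof of Lemma~\ref{L1}.
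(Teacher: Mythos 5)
Your argument reproduces the paper's own proof, which appears inside the claim of Lemma~\ref{L1} (the paper explicitly remarks that Lemma~\ref{L1.5} is what was established there): split by fibre dimension, use the determinant inequality from \cite[Lemma 3.11]{Young17} to reduce to integrability of $\log\det(Dh_x|_{V(x)})$, and bound the Birkhoff limit $\hat\phi$ from below via the Lyapunov-exponent hypothesis. If anything you are more careful than the paper in two places: you explicitly handle the $n$-dependence of the basis produced by the determinant lemma by routing through the operator norm $|Dh^{-n}_x|_{V(x)}|$ and a fixed measurable basis of $V(x)$, and you correctly flag that the final summation over $m$ requires $\dim V$ to be essentially bounded, which holds in every application the paper makes of the lemma.
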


	\begin{lemma}\label{L2}
		For $\mu$-almost every $x\in \Gamma$, we have that $$Dg_x E_{j}(x,f)=E_{j}(g(x),f)\ \ j=1,\cdots,r(x,f),$$  and the functions $r(\cdot,f),\ \lambda_{j}(\cdot,f)$ and $m_j(\cdot,f)$ ( $j=1,\cdots,r(\cdot,f)$) are constants $\mu$-almost everywhere.
	\end{lemma}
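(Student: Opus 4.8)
The plan is to transfer the $Df$-invariance of the $f$-Oseledets data to a $Dg$-equivariance on the finite-dimensional part of the splitting, and then collapse everything to constants via $(f,g)$-ergodicity, i.e.\ Proposition \ref{P2}(2)(iv). Throughout, for $y\in\Gamma$ let $\pi(y)\colon\cB_y\to E(y,f)$ denote the projection along $E_{\alpha}(y,f)$ (so $\pi(y)=\mathrm{id}-\pi_{\alpha}(y,f)$), and keep $C_0=\max\{\sup_A|Df_x|,\sup_A|Dg_x|\}$ as in the proof of Lemma \ref{L1}. \emph{Step 1: $m(\cdot,f)$ is constant.} I would first examine $\phi_x:=\pi(g(x))\circ Dg_x|_{E(x,f)}\colon E(x,f)\to E(g(x),f)$ and show it is injective: if $\phi_x v=0$ with $v\neq0$ then $Dg_xv\in E_{\alpha}(g(x),f)$, so $\limsup_n\frac1n\log|Df^n_{g(x)}Dg_xv|\le\lambda_{\alpha}$ by Theorem \ref{MET h}(b); but $v\in E(x,f)\setminus\{0\}$ has a genuine Lyapunov exponent $>\lambda_{\alpha}$ (the largest $\lambda_i(x,f)$ occurring in $v$, using (a),(d) of Theorem \ref{MET h}), and Lemma \ref{L1} forces $\lim_n\frac1n\log|Df^n_{g(x)}Dg_xv|$ to equal that exponent --- a contradiction. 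Hence $m(x,f)\le m(g(x),f)$ $\mu$-a.e.; since $Df_x$ maps each $E_i(x,f)$ isomorphically onto $E_i(f(x),f)$ by Theorem \ref{MET h}(a), the function $m(\cdot,f)$ is genuinely $f$-invariant, so applying Proposition \ref{P2}(2)(iv) to $\psi=-m(\cdot,f)$ gives that $m(\cdot,f)\equiv m_0$ is constant (and finite, since $r(x,f)<\infty$ on $\Gamma$). If $m_0=0$ the lemma is vacuous, so assume $m_0\ge1$; then $\phi_x$ is an isomorphism for $\mu$-a.e.\ $x$.

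\emph{Step 2: $Dg$ preserves the splitting and $r,\lambda_i,m_i$ are constant.} Writing $Dg_xv=\phi_x v+\Psi_x v$ for $v\in E(x,f)$, where $\Psi_x v:=\pi_{\alpha}(g(x),f)Dg_xv\in E_{\alpha}(g(x),f)$, I would compare the $E(\cdot,f)$- and $E_{\alpha}(\cdot,f)$-components in $Df_{g(x)}Dg_x=Dg_{f(x)}Df_x$ restricted to $E(x,f)$ (using forward invariance of both subbundles) to obtain, for $n\ge0$,
\[
\phi_{f^n(x)}\circ Df^n_x|_{E(x,f)}=Df^n_{g(x)}|_{E(g(x),f)}\circ\phi_x,\qquad \Psi_{f^n(x)}\circ Df^n_x|_{E(x,f)}=Df^n_{g(x)}|_{E_{\alpha}(g(x),f)}\circ\Psi_x,
\]
the first extending to all $n\in\Z$ as it involves only finite-dimensional isomorphisms. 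Since $|\phi_y|\le C_0$, for $v\in E_i(x,f)\setminus\{0\}$ the first identity and the two-sided statement of Theorem \ref{MET h}(a) give $\limsup_{n\to+\infty}\frac1n\log|Df^n_{g(x)}\phi_x v|\le\lambda_i(x,f)\le\liminf_{n\to-\infty}\frac1n\log|Df^n_{g(x)}\phi_x v|$; decomposing $\phi_x v$ in $\bigoplus_j E_j(g(x),f)$ and again using (a),(d) of Theorem \ref{MET h}, this forces $\phi_x v$ into a single $E_{j^{*}(i)}(g(x),f)$ with $\lambda_{j^{*}(i)}(g(x),f)=\lambda_i(x,f)$. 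As $j^{*}$ is injective and $\sum_i m_i(x,f)=m_0=\sum_j m_j(g(x),f)$, a dimension count makes $j^{*}$ a bijection with $\phi_x(E_i(x,f))=E_{j^{*}(i)}(g(x),f)$ and $m_{j^{*}(i)}(g(x),f)=m_i(x,f)$; matching the decreasing orderings of the exponents then gives $j^{*}(i)=i$. Thus $\phi_x(E_i(x,f))=E_i(g(x),f)$, $\lambda_i(g(x),f)=\lambda_i(x,f)$, $m_i(g(x),f)=m_i(x,f)$, $r(g(x),f)=r(x,f)$ $\mu$-a.e.; combined with the $f$-invariance of these quantities and Proposition \ref{P2}(2)(iv), all of them are $\mu$-a.e.\ constant, say $r_0,\lambda_i,m_i$, with $\lambda_{r_0}=\min_i\lambda_i>\lambda_{\alpha}$.

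\emph{Step 3: $\Psi_x\equiv0$.} This finishes the proof, since then $Dg_x|_{E(x,f)}=\phi_x$ and $Dg_xE_i(x,f)=E_i(g(x),f)$. I would first note $Dg_xE_{\alpha}(x,f)\subseteq E_{\alpha}(g(x),f)$: for $u\in E_{\alpha}(x,f)$ one has $|Df^n_{g(x)}Dg_xu|\le C_0|Df^n_x|_{E_{\alpha}(x,f)}|\,|u|$, so its growth rate is $\le\lambda_{\alpha}$ by Theorem \ref{MET h}(b), while a nonzero component of $Dg_xu$ in $E(g(x),f)$ would (as in Step 1) push the rate above $\lambda_{\alpha}$. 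Then from the second identity of Step 2, $|\Psi_{f^n(x)}|\cdot m(Df^n_x|_{E(x,f)})\le|Df^n_{g(x)}|_{E_{\alpha}(g(x),f)}|\cdot|\Psi_x|$; since $\frac1n\log m(Df^n_x|_{E(x,f)})\to\lambda_{r_0}$ (finite-dimensional Oseledets theorem for the invertible cocycle $Df^n_x|_{E(x,f)}$, whose inverse is log-integrable by Lemma \ref{L1.5} applied with $\lambda=\lambda_{\alpha}$), while $\limsup_n\frac1n\log|Df^n_{g(x)}|_{E_{\alpha}(g(x),f)}|\le\lambda_{\alpha}$, we deduce $\limsup_n\frac1n\log|\Psi_{f^n(x)}|\le\lambda_{\alpha}-\lambda_{r_0}<0$ $\mu$-a.e., so $|\Psi_{f^n(x)}|\to0$ along $\mu$-a.e.\ orbit. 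Finally, $x\mapsto|\Psi_x|$ is $\mu$-continuous, so taking compact sets $K$ with $\mu(K)$ close to $1$ on which it is continuous and using Poincar\'{e} recurrence inside $K$ (so $f^{n_k}(x)\to x$ with $f^{n_k}(x)\in K$ for a.e.\ $x\in K$) yields $|\Psi_x|=\lim_k|\Psi_{f^{n_k}(x)}|=0$.

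The routine part is the final collapse to constants via Proposition \ref{P2}(2)(iv); the real obstacle is Step 3. In the finite-dimensional case of Hu \cite{Hu96}, $Dg_x$ is an isomorphism, so $Dg_xE(x,f)=E(g(x),f)$ is immediate, but in our setting $Dg_x$ need not be surjective and a priori $Dg_xE(x,f)$ is only one of many complements of $E_{\alpha}(g(x),f)$; ruling out a spurious $E_{\alpha}$-component of $Dg_x$ on $E(x,f)$ is exactly the point where the strict spectral gap $\lambda_{r_0}>\lambda_{\alpha}$ (exponential decay of $\Psi$ along orbits), the $\mu$-continuity of the Oseledets data, and Poincar\'{e} recurrence all have to be brought in, and it will be the most delicate estimate to make precise.
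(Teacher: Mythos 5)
Your proof is correct; after the shared first step (constancy of $m(\cdot,f)$ via the trivial intersection $Dg_xE(x,f)\cap E_\alpha(g(x),f)=\{0\}$ and Proposition~\ref{P2}(2)(iv)), it takes a genuinely different path from the paper's. The paper exhibits the candidate splitting $\bigoplus_i Dg_{g^{-1}x}E_i(g^{-1}x,f)\oplus E_\alpha(x,f)$ of $\cB_x$, observes by Lemma~\ref{L1} that its finite-dimensional pieces carry the correct two-sided Lyapunov data, and then invokes the \emph{uniqueness} clause of Theorem~\ref{MET h} to identify this with the Oseledets splitting, obtaining $Dg_xE_j(x,f)=E_j(g(x),f)$ and the constancy of $r$, $\lambda_j$, $m_j$ in one stroke. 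You instead decompose $Dg_x|_{E(x,f)}=\phi_x+\Psi_x$ along $\cB_{g(x)}=E(g(x),f)\oplus E_\alpha(g(x),f)$, show that $\phi_x$ carries $E_i(x,f)$ onto $E_i(g(x),f)$ by pinning both the forward and the backward exponent of $\phi_xv$ against Theorem~\ref{MET h}(a),(d) and doing a dimension count, and then eliminate $\Psi_x$ via the cocycle identity $\Psi_{f^nx}\circ Df^n_x|_E=Df^n_{g(x)}|_{E_\alpha}\circ\Psi_x$, the spectral gap $\lambda_{r_0}>\lambda_\alpha$ (which forces $|\Psi_{f^nx}|\to0$), and Poincar\'{e} recurrence on a continuity set of $x\mapsto\Psi_x$. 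Your Step~3 is precisely the point where the Banach-space non-surjectivity of $Dg_x$ creates work that is invisible in Hu's finite-dimensional setting, as you correctly diagnose; the paper sidesteps it by quoting uniqueness, while your argument is self-contained and does not require checking that a splitting built a priori only from two-sided exponent data satisfies the hypotheses under which that uniqueness applies. Two details to make explicit when writing this up: the index map $j^{*}$ in Step~2 is order-preserving because the exponents are strictly decreasing on both sides, and this together with bijectivity is what forces $j^{*}(i)=i$ and hence $\lambda_i(g(x),f)=\lambda_i(x,f)$, $m_i(g(x),f)=m_i(x,f)$, $r(g(x),f)=r(x,f)$; and the recurrence argument in Step~3 needs (but easily gets, by the usual covering argument) recurrence into arbitrarily small neighborhoods of $x$ within the continuity set, not merely recurrence to the set itself.
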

	\begin{proof}
		By Lemma \ref{L1}, for $\mu$-almost every $x\in \Gamma$ and every $u\in E(x,f)\setminus \{0\}$ we have that
		$$\lim_{n\rightarrow \infty}\frac{1}{n} \log |Df^n_{g(x)}Dg_x u|\geq \lambda_{r(x,f)}(x,f)>\lambda_{\alpha}.$$
		This implies that $Dg_x u \notin E_{\alpha}(g(x),f)$, that is, $E_{\alpha}(g(x),f)\cap Dg_x E(x,f)=\{0\}$. Hence, one has that
		$$m(x,f)\leq \text{co}\dim E_{\alpha}(g(x),f)=m(g(x),f)$$
		for $\mu$-almost every $x\in \Gamma$. Since $\mu$ is $(f,g)$-ergodic and $m(x,f)=m(f(x),f)$, by Proposition \ref{P2} we have that $m(\cdot,f)$ is a constant $\mu$-almost everywhere.
		
		Take a point $x$ with $m(x,f)=m(g(x),f)$,  one can show that $\lambda_{j}(x,f)\le\lambda_{j}(g(x),f)$ by Lemma \ref{L1} and, $m_j(x,f)\le m_{j}(g(x),f)$ by the injectivity of $Dg_x$ for $j=1,\cdots,r(x,f)$. Consequently, one has that $r(x,f)\le r(g(x),f)$ since we have already obtained $r(x,f)$ number of exponents at point $g(x)$. Note that $r(\cdot,f),\ \lambda_{j}(\cdot,f)$ and $m_j(\cdot,f)$  are also $f$-invariant functions, it follows from the $(f,g)$-ergodity that  they are constants for $\mu$-almost every $x$.

        Since $E_{\alpha}(g(x),f)\cap Dg_x E(x,f)=\{0\}$ and $Dg_x$ is injective for $\mu$-almost every $x$, one may consider a splitting of the tangent space at point $x$ as follows:
        $$
        \cB_x=\bigoplus_{i=1}^{r}Dg_{g^{-1}x}E_i(g^{-1}(x),f)\bigoplus E_{\alpha}(g(x),f).
        $$
        It satisfies Theorem \ref{MET h}. By the unique existence of Oseledets splitting, we have that $Dg_x E_{j}(x,f)=E_{j}(g(x),f)$ for $j=1,\cdots,r(x,f)$ and $Dg_x E_{\alpha}(x,f)\subset E_{\alpha}(g(x),f)$
        for $\mu$-almost every $x$.
	\end{proof}
	One can prove in a similar fashion  that $Df_x E_{j}(x,g)=E_{j}(f(x),g)$ and the functions $r(\cdot,g)$, $\lambda_{j}(\cdot,g)$ and $m_j(\cdot,g)$, $j=1,\cdots,r(\cdot,g)$ are constants $\mu$-almost everywhere. Thus, for $h=f$ or $g$, we write $r(x,h)=r(h)$ and  $\lambda_{j}(x,h)=\lambda_{j}(h),\ m_j(x,h)=m_j(h)$ ( $j=1,\cdots,r(h)$) for $\mu$-almost every $x$.
	
	\subsection{Construction of a new splitting} In this section, we will construct a desired splitting of the tangent space $\cB_x$ for the purpose of proving our main result. We first introduce a concept in order to simplify our description.
	 Given a finite dimensional both $Df$-invariant and $Dg$-invariant sub-bundle $V$, and let $h=f,g$ or $f g$, we call the number $\lambda(h)\in \R$  \textbf{the Lyapunov exponent of $Dh$ on sub-bundle $V$}, if
	$$\lim_{n\rightarrow \pm \infty} \frac{1}{n}\log |Dh^{n}_x v|=\lambda(h)\quad  \mu-a.e.\, x$$
	for every $v\in V(x)\setminus \{0\}$.
	\begin{lemma}\label{L4}
		 Given a finite dimensional both $Df$-invariant and $Dg$-invariant sub-bundle $V$. If there exist two numbers $\lambda(f)\in \R$ and $\lambda(g)\in \R$ such that for $\mu$-almost every $x$
		$$\lim_{n\rightarrow \pm \infty} \frac{1}{n}\log |Df^{n}_x v|=\lambda(f) \ \text{and} \
		\lim_{n\rightarrow \pm \infty} \frac{1}{n}\log |Dg^{n}_x v|=\lambda(g)
		$$ for every $v\in V(x)\setminus \{0\}$. Then, for every $s,t\in \Z$ and $\mu$-almost every $x$ we have that
		$$\lim_{n\rightarrow \pm \infty} \frac{1}{n}\log |D(f^sg^t)^{n}_x v|=s\lambda(f)+t\lambda(g)\quad \forall v\in V(x)\setminus \{0\}. $$
			\end{lemma}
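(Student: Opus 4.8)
The plan is to treat $T := f^sg^t$ as an invertible transformation of $A$ (it is invertible there since $f|_A,g|_A$ are, by (H1)), to observe that $\mu$ is $T$-invariant and that $V$ is $DT$-invariant, and to prove that \emph{all} Lyapunov exponents of the finite-dimensional linear cocycle $x\mapsto DT_x|_{V(x)}$ equal $c:=s\lambda(f)+t\lambda(g)$; the asserted identity is then immediate from the multiplicative ergodic theorem applied to this cocycle. Since $f$ and $g$ commute, $T^n=f^{sn}g^{tn}$, so
$DT^n_x|_{V(x)}=\bigl(Df^{sn}_{g^{tn}(x)}|_{V(g^{tn}(x))}\bigr)\circ\bigl(Dg^{tn}_x|_{V(x)}\bigr)$. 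The second factor has the fixed base point $x$ and is controlled by the hypothesis, but the base point $g^{tn}(x)$ of the first factor drifts with $n$, so the hypothesis (which only controls $Df^m_y|_{V(y)}$ for a fixed $y$) cannot be applied to it directly. Taming this drifting base point is the main obstacle, and I will handle it with an argument based on temperate functions.

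First a reduction. Fix $h\in\{f,g\}$. For $\mu$-a.e. $x$ the hypothesis gives $\tfrac1n\log|Dh^n_xv|\to\lambda(h)$ for every $v\in V(x)\setminus\{0\}$; fixing a basis of the finite-dimensional space $V(x)$ and using the equivalence of norms on it, this upgrades to $\tfrac1m\log|Dh^m_x|_{V(x)}|\to\lambda(h)$ and $\tfrac1m\log|Dh^{-m}_x|_{V(x)}|\to-\lambda(h)$ as $m\to\infty$. Moreover, since $\lim_{n\to\pm\infty}\tfrac1n\log|Dh^n_xv|=\lambda(h)>-\infty$, Lemma \ref{L1.5} gives $\int\log^+|(Dh_x|_{V(x)})^{-1}|\,d\mu<\infty$; combined with $|Dh_x|_{V(x)}|\le C_0$ this makes both $\log^+|DT_x|_{V(x)}|$ and $\log^+|(DT_x|_{V(x)})^{-1}|$ $\mu$-integrable, so the multiplicative ergodic theorem does apply to $x\mapsto DT_x|_{V(x)}$.

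Next, for $\varepsilon>0$ set $K_\varepsilon(y):=\sup_{m\ge 0}e^{-(\lambda(f)+\varepsilon)m}\,|Df^m_y|_{V(y)}|$, and define $K'_\varepsilon(y)$ in the analogous way so as to dominate the backward norms $|Df^{-m}_y|_{V(y)}|$. By the reduction above these functions are measurable, finite, and valued in $[1,\infty)$ for $\mu$-a.e. $y$, and by construction $|Df^m_y|_{V(y)}|\le K_\varepsilon(y)e^{(\lambda(f)+\varepsilon)m}$ for $m\ge0$ (and similarly for $K'_\varepsilon$). The commutation identity $Df^{\pm m}_{g(y)}=Dg_{f^{\pm m}(y)}\circ Df^{\pm m}_y\circ(Dg_y)^{-1}$, restricted to $V$, yields $K_\varepsilon(g(y))\le C_0\,|(Dg_y|_{V(y)})^{-1}|\,K_\varepsilon(y)$ and the same bound for $K'_\varepsilon$; since $\log C_0+\log^+|(Dg_y|_{V(y)})^{-1}|\in L^1(\mu)$ by Lemma \ref{L1.5}, Lemma \ref{Tem} applied to $g$ shows that $\log K_\varepsilon$ and $\log K'_\varepsilon$ are temperate with respect to $g$, i.e. $\tfrac1n\log K_\varepsilon(g^n(x))\to 0$ and $\tfrac1n\log K'_\varepsilon(g^n(x))\to 0$ as $n\to\pm\infty$ for $\mu$-a.e. $x$.

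Now the estimate. From $|DT^n_x|_{V(x)}|\le|Df^{sn}_{g^{tn}(x)}|_{V(g^{tn}(x))}|\cdot|Dg^{tn}_x|_{V(x)}|$, the second factor contributes $\tfrac1n\log|Dg^{tn}_x|_{V(x)}|\to t\lambda(g)$ by the reduction; for the first factor, when $s\ge 0$ one has $|Df^{sn}_{g^{tn}(x)}|_{V}|\le K_\varepsilon(g^{tn}(x))e^{(\lambda(f)+\varepsilon)sn}$, and $\tfrac1n\log K_\varepsilon(g^{tn}(x))\to 0$ by temperateness with respect to $g$ (with $K'_\varepsilon$ in place of $K_\varepsilon$ when $s<0$), so the first factor contributes at most $s\lambda(f)+|s|\varepsilon$. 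Letting $\varepsilon\to 0$ gives $\limsup_n\tfrac1n\log|DT^n_x|_{V(x)}|\le c$ for $\mu$-a.e. $x$, and the identical argument applied to $T^{-1}=f^{-s}g^{-t}$ gives $\limsup_n\tfrac1n\log|DT^{-n}_x|_{V(x)}|\le -c$. The first bound says that the largest Lyapunov exponent of $x\mapsto DT_x|_{V(x)}$ is $\le c$, and the second that the largest exponent of the inverse cocycle is $\le -c$, i.e. that the smallest exponent of $x\mapsto DT_x|_{V(x)}$ is $\ge c$; hence all its Lyapunov exponents equal $c$. By the (two-sided) multiplicative ergodic theorem over the invertible system $(A,T,\mu)$, this forces $\lim_{n\to\pm\infty}\tfrac1n\log|DT^n_xv|=c$ for every $v\in V(x)\setminus\{0\}$ and $\mu$-a.e. $x$. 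Since $DT^n_x=D(f^sg^t)^n_x$, this is exactly the claimed identity.
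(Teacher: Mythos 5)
Your proof is correct, and it takes a genuinely different route from the paper's. The paper argues by contradiction: it assumes a set $A_\varepsilon$ of positive measure on which $\lambda(x,v,f^sg^t)$ overshoots $s\lambda(f)+t\lambda(g)$ by $4\varepsilon$, carves out sub-level sets $A'_{\varepsilon,\ell}$ and $A''_{\varepsilon,\ell}$ where the Oseledets asymptotics hold with uniform constant $\ell$, and then uses Poincar\'e recurrence along $f^s$ to return to these sets at a time $n$ large enough that the quantitative bounds on $|Dg^{tn}_{f^{sn}(x)}|_V|$ and $|Df^{sn}_xu|$ together contradict membership in $A'_{\varepsilon,\ell}$. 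You instead give a direct, constructive estimate: you package the forward/backward distortion of $Df|_V$ into the functions $K_\varepsilon,K'_\varepsilon$, use the commutation identity together with the integrability $\log^+|(Dg|_V)^{-1}|\in L^1(\mu)$ (from Lemma \ref{L1.5}) and Lemma \ref{Tem} to show these are temperate along $g$-orbits, and then bound the spectral radius and co-spectral radius of $DT^n|_V$ directly, concluding by applying the MET to the invertible finite-dimensional cocycle $DT|_V$ and squeezing all its exponents to $c$. The same technical obstacle---the drifting base point $g^{tn}(x)$ in $Df^{sn}_{g^{tn}(x)}$---is handled by recurrence in the paper and by temperateness in your version. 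Your approach is arguably more in the spirit of the rest of the paper (which leans heavily on temperate functions, e.g.\ Lemmas \ref{L2.5} and \ref{L41}) and is constructive rather than a proof by contradiction; the paper's version is more elementary in that it uses only Poincar\'e recurrence and avoids having to introduce the auxiliary functions $K_\varepsilon,K'_\varepsilon$ and verify their temperateness. One point worth making explicit in your write-up: when you pass from the two one-sided $\limsup$ bounds on $|DT^{\pm n}_x|_{V(x)}|$ to equality of \emph{all} Lyapunov exponents, you are implicitly using that the exponents of the inverse cocycle at $x$ are the negatives of those of the forward cocycle at $x$, which holds because $T|_A$ is invertible and $DT_x|_{V(x)}$ is an isomorphism onto $V(Tx)$ with two-sided log-integrability; this is fine, but it is the place where the finite-dimensionality and invertibility of the restricted cocycle are genuinely used, and it is worth flagging since $\mu$ need not be $T$-ergodic, so the exponents are a priori $T$-invariant measurable functions rather than constants.
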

	\begin{proof}
		Note that $f,g$ are $C^2$ maps, by the assumption and Lemma \ref{L1.5} we have that
		$$\log^{+} |(Df|_{V})^{\pm}|\in L^{1}(\mu) \ \text{and} \ \log^{+} |(Dg|_{V})^{\pm}|\in L^{1}(\mu).$$
		This implies that  $\log^{+} |Df^sg^t|_{V}|\in L^{1}(\mu)$ for every $s,t\in \Z$.
		For every $s,t\in \Z$, consider the finite dimensional $D(f^sg^t)$-invariant sub-bundle $V$, and apply the multiplicative ergodic theorem for the cocycle $\{D(f^sg^t)^{n}\}_{n\ge 0}$ which is defined on $A$ with respect to $f^sg^t$,  for $\mu$-almost every $x\in \Gamma$ the following limit
		$$\lambda(x,v,f^sg^t)=\lim_{n\rightarrow \pm \infty}\frac{1}{n}\log |D(f^sg^t)^n_x v|$$
		exists for every $v\in V(x)\setminus\{0\}$.
		
		For every $\varepsilon>0$, let
		$$A_{\varepsilon}=\Big\{x\in \Gamma: \lambda(x,v,f^sg^t)-s\lambda(f)-t\lambda(g)>4\varepsilon \ \text{for some} \ v\in V(x)\Big\}.$$
		Assume that  $\mu(A_{\varepsilon})>0$.  For every $\ell\in \N$, let
		\begin{align*}
			{A}'_{\varepsilon,\ell}=\Big\{x\in A_{\varepsilon}: \exists \ u_x \in  V(x) \ \text{s.t.}\ |D(f^sg^t)^n_x u_x|\geq \ell^{-1} e^{n(\lambda(x,u_x,f^sg^t)-\varepsilon)}|u_x|, \, \forall \ n\geq 1\Big \}
		\end{align*}
		and
		\begin{align*}
			{A}''_{\varepsilon,\ell}=\Big\{x\in A_{\varepsilon}: \ |Dg^{tn}_x u|\leq \ell e^{n(t\lambda(g)+\varepsilon)}|u|,  \ \forall \ u \in  V(x) \ \forall\ n\geq 1 \Big \}.
		\end{align*}
		Since $\mu(A_{\varepsilon})>0$, one may choose $\ell$ large enough such that $\mu({A}'_{\varepsilon,\ell}\cap {A}''_{\varepsilon,\ell})>0$. For every $x\in {A}'_{\varepsilon,\ell}\cap {A}''_{\varepsilon,\ell}$. Applying Poincar\'{e}'s recurrence theorem with respect to $f^s$, there exists $n>2\frac{\log\ell}{\varepsilon}$ such that $f^{sn} x\in {A}'_{\varepsilon,\ell}\cap {A}''_{\varepsilon,\ell}$ and
		$$|Df^{sn}_x u|\leq |u|\cdot e^{n(s\lambda(f)+\varepsilon)}$$
        for every $u\in V(x)$.
		Hence,  for every $0\neq u\in V(x)$ we have that
		\begin{align*}
			|D(f^sg^t)^n_x u|&=|Dg^{tn}_{f^{sn}(x)}Df^{sn}_x u|\\
			&\leq \ell e^{n(t\lambda(g)+s\lambda(f)+2\varepsilon)}|u|\\
			&\leq \ell e^{-n\varepsilon} e^{n(\lambda(x,u,f^sg^t)-\varepsilon)}|u|\\
			&< \ell^{-1} e^{n(\lambda(x,u,f^sg^t)-\varepsilon)}|u|,
		\end{align*}
		which contradicts with the fact $x\in {A}'_{\varepsilon,\ell}$. Hence, we have $\mu(A_{\varepsilon})=0$ for every $\varepsilon> 0$. In a similar fashion, one can show that the following set
		$$B_{\varepsilon}=\Big\{x\in \Gamma: \lambda(x,v,f^sg^t)-s\lambda(f)-t\lambda(g)<4\varepsilon \ \text{for some} \ v\in V(x)\Big\}.$$
		satisfies that $\mu(B_{\varepsilon})=0$ for every $\varepsilon> 0$. By the arbitrariness of $\varepsilon> 0$,  we have  that
		\begin{equation}\label{eq:fgV}
			\lim_{n\rightarrow \pm \infty}\frac{1}{n}\log |D(f^sg^t)^n_x v|=s\lambda(f)+t\lambda(g)\quad  \mu-a.e.\, x
		\end{equation}
		for every $0\neq v\in V(x)$.
	\end{proof}
	
	Fix $i\in \{1,\cdots,r(f)\}$, note that
	$$Dg_x E_{i}(x,f)=E_{i}(g(x),f) \ \text{,} \ \log^{+} |Dg_x|_{E_{i}(x,f)}|\in L^1(\mu)\, \text{and}\, \dim E_{i}(x,f)=m_i(f) .$$
	Restrict on the finite dimensional $Dg$-invariant sub-bundle $E_{i}(\cdot,f)$, and apply the multiplicative ergodic theorem for the cocycle $\{Dg^{n}\}_{n\ge 0}$ which is defined on $A$ with respect to $g$,  for $\mu$-almost every $x\in \Gamma$ there exist $j_i$ numbers
	$$\lambda_{i,1}(g)>\lambda_{i,2}(g)>\cdots>\lambda_{i,j_i}(g)\geq -\infty$$
	and a unique splitting of the subspace $E_{i}(x,f)$
	\begin{equation}\label{eq:E31}
		E_{i}(x,f)=E_{i,1}(x) \bigoplus \cdots \bigoplus E_{i,j_i}(x)
	\end{equation}
	such that the following properties hold:
	\begin{enumerate}
		\item[(a)] for each $j=1,\cdots,j_i$, one has that $\dim E_{i,j}(x)=m^{E}_{i,j}\leq m_i(f)$, $Dg_x E_{i,j}(x)=E_{i,j}(g(x))$,  and $Df_x E_{i,j}(x)=E_{i,j}(f(x))$ by using a similar arguments as in Lemma \ref{L2}. Moreover, for every $v\in E_{i,j}(x)\setminus \{0\}$ we have that
		\begin{align*}
		\lim_{n\rightarrow \pm \infty} \frac{1}{n}\log |Df^{n}_x v|=\lambda_{i}(f)\,\text{and}\,
		\lim_{n\rightarrow \infty} \frac{1}{n}\log |Dg^{n}_x v|=\lambda_{i,j}(g)
		\end{align*}
		the second limit also holds for $n\rightarrow -\infty$ if $\lambda_{i,j}(g)>-\infty$.
		\item[(b)] the map $x\mapsto E_{i,j}(x)$ ($j=1,\cdots,j_i$) is $\mu$-continuous.
	\end{enumerate}
    \begin{remark}
    	(1)There is a natural splitting of $E_{i}(x,f)$
    	$$E_{i}(x,f)=E_{i,1}(x) \bigoplus \cdots \bigoplus E_{i,r(g)}(x) \bigoplus E_{i,\alpha}(x)$$
    	where $E_{i,j}(x)=E_{i}(x,f)\cap E_{j}(x,g)$ and
    	$E_{i,\alpha}(x)=E_{i}(x,f)\cap E_{\alpha}(x,g)$.
    	However, one can only show that the Lyapunov exponents of $D(fg)$ on the finite dimensional sub-bundle $E_{i,\alpha}$ are upper bounded by $\lambda_{\alpha}+\lambda_i(f)$, which may be larger than $0$. This implies that $E_{i,\alpha}$ may not belong to any subspace which is unstable, center or stable with respect to $D(fg)$. We would like to point out that the above method of constructing the desired splitting is slightly different with \cite{Li20}; (2) If $\log^{+} |(Dg_x|_{E_{i}(x,f)})^{-1}|\in L^1(\mu)$, then the Lyapunov exponent $\lambda_{i,j_i}(g)$ is finite.
    \end{remark}

	Note that the Lyapunov exponents of $Df,Dg$ on the sub-bundle $E_{i,j}$ are $\lambda_{i}(f)$ and $\lambda_{i,j}(g)$ respectively, it follows from Lemma \ref{L4} that the  Lyapunov exponent of $D(fg)$ on $E_{i,j}$ is $\lambda_{i}(f)+\lambda_{i,j}(g)$.
	
	For $x\in \Gamma$, let
	$$E_{\alpha,j}(x):=E_{\alpha}(x,f)\cap E_{j}(x,g) \ \text{for} \ j=1,\cdots,r(g),$$
	and
	$E_{\alpha}(x):=E_{\alpha}(x,f)\cap E_{\alpha}(x,g)$.
	Recall that
	$$\limsup_{n\rightarrow \infty}\frac{1}{n}\log|Df^n_x|_{E_{\alpha}(x)}|\leq \lambda_{\alpha}\ \text{and} \
	  \limsup_{n\rightarrow \infty}\frac{1}{n}\log|Dg^n_x|_{E_{\alpha}(x)}|\leq \lambda_{\alpha}.$$
    Therefore, using the same arguments as in the proof of Lemma \ref{L4}, one can show that
	\begin{equation}\label{eq:fgE-alpha}
	\limsup_{n\rightarrow \infty}\frac{1}{n}\log|D(f^sg^t)^n_x|_{E_{\alpha}(x)}|\leq (s+t)\lambda_{\alpha}\quad \forall s,t\in \N.
	\end{equation}
	
	Given the sub-bundle $E_{\alpha,j}$, the Lyapunov exponent of $Dg$ on it is $\lambda_j(g)$ for $j=1,\cdots,r(g)$. To get the Lyapunov exponents of $Df$ and $D(fg)$ on it, applying the multiplicative ergodic theorem for cocycle $\{Df^n|_{E_{\alpha,j}}\}_{n\ge 0}$ which is defined on $A$ with respect to $f$ and satisfied that $\log^{+}|Df |_{E_{\alpha,j}}|\in L^1(\mu)$,
	 for $\mu$-a.e. $x\in \Gamma$ there exist $i_j$ numbers
	$$(\lambda_{\alpha}>)\lambda_{1,j}(f)>\lambda_{2,j}(f)>\cdots>\lambda_{i_j,j}(f)\geq -\infty$$
	and a unique splitting of the subspace $E_{\alpha, i}(x,f)$
	\begin{equation}\label{eq:E32}
		E_{\alpha,i}(x,f)=G_{1,j}(x) \bigoplus \cdots \bigoplus G_{i_j,j}(x)
	\end{equation}
	such that  the following properties hold:
	\begin{enumerate}
		\item[(a)] for each $i=1,\cdots,i_j$, one has that $\dim G_{i,j}(x)=m^{G}_{i,j}\leq m_j(g)$, $Df_x G_{i,j}(x)=G_{i,j}(f(x))$, and $Dg_x G_{i,j}(x)=G_{i,j}(g(x))$  by using a similar arguments as in Lemma \ref{L2}. Moreover, for every  $v\in G_{i,j}(x) \setminus \{0\}$ we have that
		\begin{align*}
		\lim_{n\rightarrow \infty} \frac{1}{n}\log |Df^{n}_x v|&=\lambda_{i,j}(f)\\
		\lim_{n\rightarrow \pm \infty} \frac{1}{n}\log |Dg^{n}_x v|&=\lambda_{j}(g),
		\end{align*}
		where the first limit also holds for $n\rightarrow -\infty$ if $\lambda_{i,j}(f)>-\infty$;
		\item[(b)] the map $x\mapsto G_{i,j}(x)$ ($i=1,\cdots,i_j$) is $\mu$-continuous.
	\end{enumerate}

    Together with (\ref{eq:E31}) and (\ref{eq:E32}),  for $\mu$-almost every $x$ we get a both $Df$-invariant and $Dg$-invariant splitting of $\cB_x$:
    $$\cB_x=\bigr(\bigoplus_{i=1}^{r(f)} \bigoplus_{j=1}^{j_i} E_{i,j}(x)\bigr)
    \bigoplus\bigr(\bigoplus_{j=1}^{r(g)} \bigoplus_{i=1}^{i_j} G_{i,j}(x)\bigr)
    \bigoplus E_{\alpha}(x)$$ such that the following properties hold:
    \begin{enumerate}
    	\item[(i)] for each $i=1,\cdots,r(f)$, $j=1,\cdots,j_i$, the Lyapunov exponents of $Df$,$Dg$ and $D(fg)$ on the sub-bundle $E_{i,j}$ are $\lambda_{i}(f)$, $\lambda_{i,j}(g)$ and $\lambda_{i}(f)+\lambda_{i,j}(g)$ respectively
    	($\lambda_{i,j_i}(g)$ may be $-\infty$);
    	\item[(ii)] for each $j=1,\cdots,r(g)$, $i=1,\cdots,i_j$, the Lyapunov exponents of $Df$,$Dg$ and $D(fg)$ on the sub-bundle $G_{i,j}$ are $\lambda_{i,j}(f)$, $\lambda_{j}(g)$ and $\lambda_{i,j}(f)+\lambda_{j}(g)$ respectively
    	($\lambda_{i_j,j}(f)$ may be $-\infty$);
    	\item[(iii)] $\displaystyle{\limsup_{n\rightarrow \infty}\frac{1}{n}\log|D(f^sg^t)^n_x|_{E_{\alpha}(x)}|\leq (s+t)\lambda_{\alpha}}$ for every $s,t\in \N$.
    \end{enumerate}	

    \subsection{Norms of projections}
    In this section, we shall show the norms of projections are temperate.

	\begin{lemma}\label{L2.5}
	Let $\cB_x=V(x)\oplus U(x)$ be a measurable splitting of the tangent space $\cB_x$ for every $x\in \Gamma$, and let $\pi_{V}(x)$ be the projection of $\cB_x $ onto $ V(x) $ via the splitting.  Suppose that $V$ is a finite dimensional $Dh$-invariant sub-bundle and $Dh_x U(x)\subset U(h(x))$ for every $x\in \Gamma$, and $\log |(Dh|_{V})^{-1}|\in L^1(\mu)$.
	Then we have that
	$$\lim_{n\rightarrow \pm\infty} \frac{1}{n}\log |\pi_{V}(h^{n}(x))|=0 \quad \mu\text{-}a.e.\, x$$
    where $h=f$ or $g$.
    \end{lemma}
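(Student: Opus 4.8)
The plan is to reduce the statement to Lemma~\ref{Tem} applied to the single function $\psi(x):=\log|\pi_V(x)|$. Since $h|_A$ is invertible by (i) of (H1) and $\mu\in\cM(f,g,A)\subset\cM(h,A)$, the triple $(A,\mu,h|_A)$ is an invertible measure preserving system; moreover the conclusion $\lim_{n\to\pm\infty}\frac1n\log|\pi_V(h^n(x))|=0$ is exactly the assertion that $\psi$ is ($\mu$-)temperate with respect to $h$. By Lemma~\ref{Tem} it therefore suffices to dominate one of the coboundaries $(\psi\circ h-\psi)^{\pm}$ by an integrable function, and $x\mapsto|\pi_V(x)|$ is measurable because the splitting $\cB_x=V(x)\oplus U(x)$ is.

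The key step will be the intertwining relation
\[
\pi_V(h(x))\circ Dh_x=Dh_x\circ\pi_V(x)\qquad(x\in\Gamma).
\]
To prove it I would write $w\in\cB_x$ as $w=w_V+w_U$ with $w_V\in V(x)$, $w_U\in U(x)$; then $Dh_xw_V\in V(h(x))$ by finite-dimensional $Dh$-invariance of $V$, and $Dh_xw_U\in U(h(x))$ by the forward invariance $Dh_xU(x)\subset U(h(x))$, so applying $\pi_V(h(x))$ to $Dh_xw=Dh_xw_V+Dh_xw_U$ returns $Dh_xw_V=Dh_x\pi_V(x)w$. Since $Dh_x|_{V(x)}$ is an injective linear map of the finite-dimensional space $V(x)$ onto $V(h(x))$, hence invertible, this rearranges to $\pi_V(x)=\bigl(Dh_x|_{V(x)}\bigr)^{-1}\circ\pi_V(h(x))\circ Dh_x$. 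Taking operator norms and putting $C_0:=\sup_{x\in A}|Dh_x|<\infty$ (finite because $h$ is $C^2$ and $A$ compact) gives
\[
\log|\pi_V(x)|\le\log\bigl|(Dh_x|_{V(x)})^{-1}\bigr|+\log|\pi_V(h(x))|+\log C_0,
\]
so that
\[
(\psi\circ h-\psi)^{-}(x)\le\bigl|\log|(Dh_x|_{V(x)})^{-1}|\bigr|+|\log C_0|.
\]
Because $|(Dh_x|_{V(x)})^{-1}|\ge C_0^{-1}$, the negative part of $\log|(Dh|_V)^{-1}|$ is bounded, and the hypothesis $\log|(Dh|_V)^{-1}|\in L^1(\mu)$ then makes the right-hand side integrable; invoking Lemma~\ref{Tem} with this $\psi$ finishes the argument.

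I do not expect a serious obstacle: everything hinges on the intertwining identity above, and the one quantitative input, integrability of the coboundary, is precisely the standing hypothesis $\log|(Dh|_V)^{-1}|\in L^1(\mu)$ (this is why that hypothesis is imposed rather than temperateness of $\pi_V$ being assumed directly). For bookkeeping I would also note that whenever $V(x)\ne\{0\}$ the operator $\pi_V(x)$ is a nonzero idempotent, so $|\pi_V(x)|\ge1$ and $\psi\ge0$ causes no difficulty with $-\infty$ values; the sub-bundles to which Lemma~\ref{L2.5} is later applied (the $E_{i,j}$ and $G_{i,j}$) all have positive dimension, so the degenerate case does not occur.
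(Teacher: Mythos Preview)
Your proposal is correct and follows essentially the same route as the paper: both establish the intertwining relation $\pi_V(h(x))\circ Dh_x=Dh_x\circ\pi_V(x)$, deduce $\log|\pi_V(x)|-\log|\pi_V(h(x))|\le \log|(Dh_x|_{V(x)})^{-1}|+\log C_0$, and then invoke Lemma~\ref{Tem} using the integrability hypothesis on $\log|(Dh|_V)^{-1}|$. Your write-up is in fact a bit more explicit than the paper's on why the intertwining holds and why $|\pi_V(x)|\ge 1$, but the substance is the same.
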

    \begin{proof}
	Let $h=f$ or $g$. Since $Dh_x V(x)=V(h(x))$ and $Dh_x$ is injective for every $x$, we have that
	$$Dh_x \circ (\pi_{V}(x)) v= (\pi_{V}(hx)) \circ Dh_x v$$
    for every $v\in \cB_x$.
	Hence, we have that $|\pi_{V}(x)|\leq |(Dh_x|_{V(x)})^{-1}| \cdot |\pi_{V}(h(x))| \cdot |Dh_x|$. Consequently, one has that
	$$\log |\pi_{V}(x)|-\log|\pi_{V}(h(x))|\leq \log |(Dh_x|_{V(x)})^{-1}|+ \log C_0$$
    where $C_0=\max\{\sup_{x\in A}|Df_x|,\sup_{x\in A}|Dg_x|\}<\infty$.
	Since $\log^{+}|(Dh_{x}|_{V(x)})^{-1}|\in L^1(\mu)$, we have that $(\log |\pi_{V}(x)|-\log|\pi_{V}(h(x))|)^{+}\in L^1(\mu)$.
    It follows from  Lemma \ref{Tem} that
	$$\lim_{n\rightarrow \pm\infty} \frac{1}{n}\log |\pi_{V}(h^{n}(x))|=0 \quad \mu\text{-}a.e.\,x.$$
    \end{proof}

    Let $\pi_{i,j}^{E}(x)$, $\pi_{i,j}^{G}(x)$ and $\pi_{\alpha}(x)$ denote the projections of $\cB_x$ onto
    $E_{i,j}(x)$, $G_{i,j}(x)$ and $E_{\alpha}(x)$ respectively, we have the following result.
    \begin{corollary}\label{L3}
	For every $i=1,\cdots,r(f)$, $j=1,\cdots,j_i$ with $\lambda_{i,j}(g)>-\infty$ and every $j=1,\cdots,r(g)$, $i=1,\cdots,i_j$ with $\lambda_{i,j}(f)>-\infty$, we have that
	$$
	\lim_{n\rightarrow \pm\infty} \frac{1}{n}\log |\pi_{i,j}^{E}(h^{n}(x))|=0 \ \text{and} \
	\lim_{n\rightarrow \pm\infty} \frac{1}{n}\log |\pi_{i,j}^{G}(h^{n}(x))|=0
	\quad \mu\text{-}a.e.\, x$$
    where $h=f$ or $g$.
    \end{corollary}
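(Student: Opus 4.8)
The plan is to obtain Corollary \ref{L3} as a direct application of Lemma \ref{L2.5} to each finite-dimensional summand $E_{i,j}$ and $G_{i,j}$ of the splitting of $\cB_x$ constructed above, taking the direct sum of all the remaining summands as the complementary bundle $U$. I will treat the $E_{i,j}$ case in detail; the $G_{i,j}$ case is symmetric.

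So fix $i\in\{1,\dots,r(f)\}$ and $j\in\{1,\dots,j_i\}$ with $\lambda_{i,j}(g)>-\infty$, set $V=E_{i,j}$ and
$$U(x)=\Bigl(\bigoplus_{(i',j')\neq(i,j)}E_{i',j'}(x)\Bigr)\oplus\Bigl(\bigoplus_{j'=1}^{r(g)}\bigoplus_{i'=1}^{i_{j'}}G_{i',j'}(x)\Bigr)\oplus E_{\alpha}(x),$$
so that $\cB_x=V(x)\oplus U(x)$ is the measurable splitting and $\pi_V=\pi^{E}_{i,j}$. Each summand of $U$ is $Df$- and $Dg$-invariant (the summand $E_{\alpha}$ being only mapped into itself, which suffices), hence $Dh_xU(x)\subset U(h(x))$ for $h=f$ and $h=g$; and $V$ is finite-dimensional and $Dh$-invariant. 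It then remains to verify the hypothesis $\log|(Dh_x|_{V(x)})^{-1}|\in L^1(\mu)$ of Lemma \ref{L2.5} for $h=f,g$. On $E_{i,j}$ the Lyapunov exponent of $Df$ is $\lambda_i(f)$, which is $>\lambda_{\alpha}>-\infty$, and the Lyapunov exponent of $Dg$ is $\lambda_{i,j}(g)>-\infty$ by the choice of $(i,j)$; applying Lemma \ref{L1.5} with $\lambda=\lambda_i(f)$, resp.\ $\lambda=\lambda_{i,j}(g)$, gives $\log^{+}|(Dh_x|_{V(x)})^{-1}|\in L^1(\mu)$. For the negative part, note that $|(Dh_x|_{V(x)})^{-1}|\cdot|Dh_x|_{V(x)}|\geq 1$ and $|Dh_x|\leq C_0$ on $A$ (with $C_0=\max\{\sup_{x\in A}|Df_x|,\sup_{x\in A}|Dg_x|\}<\infty$ as before), so $\log|(Dh_x|_{V(x)})^{-1}|\geq-\log C_0$, and therefore $\log|(Dh_x|_{V(x)})^{-1}|\in L^1(\mu)$. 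Lemma \ref{L2.5} then yields $\lim_{n\to\pm\infty}\frac{1}{n}\log|\pi^{E}_{i,j}(h^n(x))|=0$ for $\mu$-a.e.\ $x$, for $h=f$ and $h=g$.

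For the sub-bundles $G_{i,j}$ with $\lambda_{i,j}(f)>-\infty$ the argument is the same: take $V=G_{i,j}$ and $U$ the direct sum of all the other summands; on $G_{i,j}$ the Lyapunov exponent of $Df$ is $\lambda_{i,j}(f)>-\infty$ and that of $Dg$ is $\lambda_{j}(g)>\lambda_{\alpha}>-\infty$, so Lemma \ref{L1.5} together with the uniform bound $C_0$ gives $\log|(Dh_x|_{G_{i,j}(x)})^{-1}|\in L^1(\mu)$ for $h=f,g$, and Lemma \ref{L2.5} finishes the proof. This is essentially bookkeeping; the only points requiring a moment of care are upgrading the $\log^{+}$-integrability delivered by Lemma \ref{L1.5} to genuine $L^1$-integrability of $\log|(Dh|_V)^{-1}|$ — which is exactly where the uniform operator bound on $A$ enters — and checking that the complementary bundle $U$ is forward-invariant under both $Df$ and $Dg$, so that Lemma \ref{L2.5} is applicable.
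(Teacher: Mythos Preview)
Your proof is correct and follows essentially the same route as the paper: verify via Lemma \ref{L1.5} that $\log^{+}|(Dh|_{V})^{-1}|\in L^{1}(\mu)$ on each finite-dimensional summand $V=E_{i,j}$ (resp.\ $G_{i,j}$) using the finiteness of the relevant Lyapunov exponents, then invoke Lemma \ref{L2.5}. Your explicit check that the complementary bundle $U$ is forward-invariant under both derivatives, and your upgrade from $\log^{+}$- to full $L^{1}$-integrability via the uniform bound $C_{0}$, are small clarifications the paper leaves implicit (indeed the proof of Lemma \ref{L2.5} only uses $\log^{+}$), but otherwise the arguments coincide.
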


    \begin{proof}
	For every $i\in \{1,\cdots,r(f)\}$, $j\in \{1,\cdots,j_i\}$ and every $v\in E_{i,j}(x)\setminus\{0\}$, we have that
	$$\lim_{n\rightarrow \pm \infty} \frac{1}{n} \log |Df^{n}_{x} v|=\lambda_{i}(f), \ \lim_{n\rightarrow \pm \infty} \frac{1}{n} \log |Dg^{n}_{x} v|= \lambda_{i,j}(g) $$
	and $\lambda_{i}(f)>\lambda_{\alpha}$, $\lambda_{i,j}(g)>-\infty$ by the assumptions. By Lemma \ref{L1.5}, one has that
	$$\log|(Df|E_{i,j})^{-1}|\in L^1(\mu) \ \text{and} \
	\log|(Dg|E_{i,j})^{-1}|\in L^1(\mu).$$
	It follows form Lemma \ref{L2.5} that the norm of projection $\pi_{i,j}^{E}$ is ($\mu$-)temperate with respect to $f$ and $g$ respectively. In a similar fashion, one can show that the norm of projection $\pi_{i,j}^{G}$ is  ($\mu$-)temperate with respect to $f$ and $g$ respectively.
    \end{proof}

    Furthermore, if $\mu$ satisfies the condition (H4) then the norm of $\pi_{\alpha}(x)$ is also temperate.
    This condition is also used in the two cases:  (1) $i=1,\cdots,r(f)$ and  $j= j_i$; (2)$j=1,\cdots,r(g)$ and $i=i_j$.
    \begin{corollary}\label{C3}
	If $\mu$ satisfies the condition (H4), then the exponent $\lambda_{i,j}(f)$ is finite for every $i=1,\cdots,r(f)$ and $j=1,\cdots, j_i$, and the exponent
    $\lambda_{i,j}(g)$ is finite for every $j=1,\cdots,r(g)$ and $i=1,\cdots, i_j$. Moreover, all the norms of the projections $\pi_{i,j}^{E}(x)$, $\pi_{i,j}^{G}(x)$ and $\pi_{\alpha}(x)$ are ($\mu$-)temperate with respect to $f$ and $g$ respectively.
    \end{corollary}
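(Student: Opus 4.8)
The plan is to feed condition~(H4) into the two refined multiplicative ergodic theorems that were used to build the splitting of $\cB_x$ --- once on the finite dimensional bundle $E_i(\cdot,f)$ and once on $E_{\alpha,j}(\cdot)=E_\alpha(\cdot,f)\cap E_j(\cdot,g)$ --- in order to upgrade the two exponents that the construction allowed to be $-\infty$ into finite ones; after that, temperateness of every projection follows from Corollary~\ref{L3} and a finite-sum estimate. The key observation that makes (H4) bite is that on each of these two bundles one of the two relevant contraction integrals is \emph{automatically} finite by Lemma~\ref{L1.5}, because the corresponding coarse exponent ($\lambda_i(f)$ on $E_i(\cdot,f)$, $\lambda_j(g)$ on $E_{\alpha,j}$) is $>\lambda_\alpha>-\infty$; the dichotomy in (H4) then rules out the ``both non-integrable'' case and forces the other integral to be finite as well.

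\textbf{Finiteness of the exponents.} Fix $i\in\{1,\dots,r(f)\}$. By Lemma~\ref{L2}, $E_i(\cdot,f)$ is a finite dimensional sub-bundle that is both $Df$- and $Dg$-invariant, and every $Df$-Lyapunov exponent on it equals $\lambda_i(f)>\lambda_\alpha$; hence Lemma~\ref{L1.5} gives $\int\log^+|(Df_x|_{E_i(x,f)})^{-1}|\,d\mu<\infty$, so (H4) forces $\int\log^+|(Dg_x|_{E_i(x,f)})^{-1}|\,d\mu<\infty$. A telescoping estimate $|(Dg^n_x|_{E_i})^{-1}|\le\prod_{k=0}^{n-1}|(Dg_{g^k x}|_{E_i})^{-1}|$ together with Birkhoff's theorem (note $\log|(Dg_x|_{E_i(x,f)})^{-1}|\ge-\log C_0$, so it is genuinely in $L^1(\mu)$, cf.\ part~(2) of the remark after the $E$-splitting) then shows $\lambda_{i,j_i}(g)>-\infty$, hence every $\lambda_{i,j}(g)$ is finite. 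Symmetrically, for $j\in\{1,\dots,r(g)\}$ the bundle $E_{\alpha,j}(\cdot)\subset E_j(\cdot,g)$ is finite dimensional and (arguing as in Lemma~\ref{L2} via $(f,g)$-ergodicity) both $Df$- and $Dg$-invariant; each $Dg$-exponent on it is $\lambda_j(g)>\lambda_\alpha$, so Lemma~\ref{L1.5} and (H4) give $\int\log^+|(Df_x|_{E_{\alpha,j}(x)})^{-1}|\,d\mu<\infty$, whence $\lambda_{i_j,j}(f)$ --- and hence every $\lambda_{i,j}(f)$ --- is finite.

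\textbf{Temperateness of the projections.} With all $\lambda_{i,j}(g)$ and $\lambda_{i,j}(f)$ finite, Corollary~\ref{L3} applies to every block and yields that each $\pi_{i,j}^E$ and each $\pi_{i,j}^G$ is ($\mu$-)temperate with respect to both $f$ and $g$. For $\pi_\alpha$ I would use that the complement of $E_\alpha(x)$ in $\cB_x$ is finite dimensional, so $\pi_\alpha(x)=\mathrm{Id}-\sum_{i,j}\pi_{i,j}^E(x)-\sum_{i,j}\pi_{i,j}^G(x)$ is a finite sum; consequently $1\le|\pi_\alpha(x)|\le 1+\sum_{i,j}|\pi_{i,j}^E(x)|+\sum_{i,j}|\pi_{i,j}^G(x)|$, the lower bound because a nonzero idempotent operator has norm $\ge1$ (the degenerate case $E_\alpha(x)=\{0\}$ being trivial). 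Since each term on the right is temperate and there are finitely many, $\lim_{n\to\pm\infty}\tfrac1n\log|\pi_\alpha(h^n(x))|=0$ for $\mu$-a.e.\ $x$ and $h=f,g$.

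\textbf{Where the work is.} The genuinely new input is the first part: one must be careful to extract, for free, the integrability of the ``easy'' inverse from Lemma~\ref{L1.5} (exploiting that the coarse exponent exceeds $\lambda_\alpha$) \emph{before} invoking (H4), so that the dichotomy can only resolve in the ``both integrable'' direction; everything after that is bookkeeping. A minor technical point to verify along the way is that $E_{\alpha,j}$ is genuinely a $Df$- and $Dg$-invariant sub-bundle in the sense of Section~\ref{setting}, which is the same $(f,g)$-ergodicity argument as in Lemma~\ref{L2}.
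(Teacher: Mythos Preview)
Your proposal is correct and follows essentially the same route as the paper: first use Lemma~\ref{L1.5} on $E_i(\cdot,f)$ and $E_{\alpha,j}(\cdot)$ to get one of the two inverse-norm integrals finite for free, then invoke (H4) to force the other one finite and hence all the refined exponents finite, then apply Corollary~\ref{L3} for the $E$- and $G$-projections and finish with $\pi_\alpha=\mathrm{Id}-\sum\pi_{i,j}^E-\sum\pi_{i,j}^G$. Your write-up is in fact slightly more explicit than the paper's (you spell out the Birkhoff/telescoping step and the idempotent lower bound for $|\pi_\alpha|$, and you flag the invariance of $E_{\alpha,j}$ as something to check), but the argument is the same.
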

    \begin{proof} %By Corollary \ref{L3}, it suffices to show that the norm of $\pi_{\alpha}(x)$ is ($\mu$-)temperate with respect to $f$ and $g$ respectively.
    Note that the Lyapunov exponent of the cocycle $\{Df_x^n\}_{n\ge 0}$ on the sub-bundle $E_i(x,f)$ is $\lambda_i(f)$, and the Lyapunov exponent of the cocycle $\{Dg_x^n\}_{n\ge 0}$ on the sub-bundle $E_{\alpha,j}(x)$ is $\lambda_j(g)$, both $\lambda_i(f)$ and $\lambda_j(g)$ are finite.
	By Lemma \ref{L1.5}, we have that
	$$ \int \log^{+}|(Df_x|E_i(x,f))^{-1}| d\mu<\infty \ \text{and} \
	\int \log^{+}|(Dg_x|E_{\alpha,j}(x))^{-1}| d\mu<\infty$$
    for every $1\leq i \leq r(f),1\leq j \leq r(g)$.
 	Consequently,  by assumption (H4) we have that
	$$ \int \log^{+}|(Dg_x|E_i(x,f))^{-1}| d\mu<\infty \ \text{and} \
	\int \log^{+}|(Df_x|E_{\alpha,j}(x))^{-1}| d\mu<\infty$$
    for every $1\leq i \leq r(f),1\leq j \leq r(g)$. The integrability of  $\log^{+}|(Dg_x|E_i(x,f))^{-1}|$ implies that $\lambda_{i,j}(g)>-\infty$ for every $i=1,\cdots,r(f)$ and $j=1,\cdots, j_i$. Similarly, the  integrability of $ \log^{+}|(Df_x|E_{\alpha,j}(x))^{-1}|$ yields that $\lambda_{i,j}(f)>-\infty$ for every $j=1,\cdots,r(g)$ and $i=1,\cdots, i_j$.

   Hence, by Corollary \ref{L3} the norms of the projections $\pi_{i,j}^{E}(x)$ and $\pi_{i,j}^{G}(x)$ are ($\mu$-)temperate with respect to $f$ and $g$ respectively. Finally, the fact $$Id_x=\sum_{i=1}^{r(f)}\sum_{j=1}^{j_i}\pi_{i,j}^{E}(x)+\sum_{j=1}^{r(g)}\sum_{i=1}^{i_j}\pi_{i,j}^{G}(x)
   +\pi_{\alpha}(x)$$
   yields that the norm of the projection $\pi_{\alpha}(x)$ is ($\mu$-)temperate with respect to $f$ and $g$ respectively.
   \end{proof}
   \begin{remark}
	In general, there are two ways to prove the norm of the projection operator of some subspace is temperate. The first one is that we use to prove the above  lemma, which is suitable to the case that the Lyapunov exponents on it are bounded. We refer the reader to \cite[Lemma 1]{Froyland18}, \cite[Lemma 2.11]{Quas12} and \cite[Proposition 3.7]{Lucas17} for the second one, where the condition is that the exponential growth rate of  vectors in the subspace is % strictly smaller or strictly larger than
    not equal to the exponential growth rate of vectors  in the complement of the subspace.
   \end{remark}

\subsection{MET for $Df$ and $Dg$}	
    Without loss of generality, assume that $\Gamma$ is both $f$-invariant and $g$-invariant, and all of the previous statements hold for every $x\in \Gamma$. Otherwise, we may remove a zero  measure subset of $\Gamma$ to achieve this goal. Then the MET for commuting transformations on Banach spaces can be stated  as follows.
	\begin{theorem}\label{MET fg}
		Given a separable Banach space $\cB$ with norm $|\cdot|$. Let $A,f,g$ satisfy conditions (H1)-(H3) and let $\mu \in \cE(f,g,A)$. Then, there exist an  $f$-invariant and $g$-invariant Borel subset $\Gamma$ with $\mu(\Gamma)=1$, and numbers
		\begin{align*}
		\lambda_1 (f)>\lambda_2(f)>\cdots \lambda_{r(f)} (f)>\lambda_{\alpha} \ \text{and} \
		\lambda_1 (g)>\lambda_2(g)>\cdots \lambda_{r(g)} (g)>\lambda_{\alpha},
		\end{align*}
		such that for every $1\leq i \leq r(f),1\leq j \leq r(f)$ there exist numbers
		\begin{align*}
		\lambda_{i,1} (g)>\lambda_{i,2}(g)>\cdots \lambda_{i,j_i} (g)\geq -\infty \ \text{and} \
		\lambda_{1,j} (f)>\lambda_{2,j}(f)>\cdots \lambda_{i_j,j} (f)\geq -\infty,
		\end{align*}
		so that for every $x\in \Gamma$ we have a splitting of $\cB_x$:
		$$\cB_x=(\bigoplus_{i=1}^{r(f)} \bigoplus_{j=1}^{j_i} E_{i,j}(x))
		\bigoplus(\bigoplus_{j=1}^{r(g)} \bigoplus_{i=1}^{i_j} G_{i,j}(x))
		\bigoplus E_{\alpha}(x)$$
		with the following properties:
		\begin{enumerate}
			\item[(a)] For every $1\leq i \leq r(f)$ and $1\leq j\leq j_i$, $\dim E_{i,j}(x)=m^E_{i,j}<\infty$ and for every $s,t\in \Z$, $D(f^sg^t)_x E_{i,j}(x)=E_{i,j}(f^sg^t(x))$. If $\lambda_{i,j}(g)>-\infty$, then for every $v\in E_{i,j}(x)\setminus \{0\}$ we have that
			$$
			\lim_{n\rightarrow \pm \infty} \frac{1}{n}\log |D(f^sg^t)^{n}_x v|=s\lambda_i(f)+t\lambda_{i,j}(g).
			$$
			\item[(b)]For every $1\leq j \leq r(g)$ and $1\leq i\leq i_j$, $\dim G_{i,j}(x)=m^G_{i,j}<\infty$ and for every $s,t\in \Z$, $D(f^sg^t)_x G_{i,j}(x)=G_{i,j}(f^sg^t(x))$. If $\lambda_{i,j}(f)>-\infty$, then for every $v\in G_{i,j}(x)\setminus \{0\}$ we have that
			$$
			\lim_{n\rightarrow \pm \infty} \frac{1}{n}\log |D(f^sg^t)^{n}_x v|=t\lambda_j(g)+s\lambda_{i,j}(f).
			$$
			\item[(c)] The subspace $E_{\alpha}(x)$ is closed and finite co-dimensional, and satisfies that
			$D(f^sg^t)_x E_{\alpha}(x)\subset E_{\alpha}(f^sg^t(x))$ for every $s,t\in \N$ and
			$$ \limsup_{n\rightarrow \infty} \frac{1}{n}\log |D(f^sg^t)^{n}_x|_{E_{\alpha}(x)}|\leq (s+t)\lambda_{\alpha}; $$
			\item[(d)] The maps $x\mapsto E_{i,j}(x),x\mapsto G_{i,j}(x)$ and $x\mapsto E_{\alpha}(x)$ are  $\mu$-continuous;
			\item[(e)] Additionally, if $\mu$ satisfies the condition (H4), then for every pair $(i,j)$ the Lyapunov exponents $\lambda_{i,j}(f),\lambda_{i,j}(g)$ are finte. Let $\pi^E_{i,j}(x),\pi^G_{i,j}(x)$ and $\pi_{\alpha}(x)$ denote the projection of $\cB_x $ onto $E_{i,j}(x)$, $G_{i,j}(x)$ and $E_{\alpha}(x)$ via the splitting respectively, then the norm of each of those projection operators is ($\mu$-)temperate.
		\end{enumerate}
	\end{theorem}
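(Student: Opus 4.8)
The plan is to assemble Theorem \ref{MET fg} from the ingredients built in the preceding subsections; its statement is essentially the union of the conclusions already established there, so the work is one of organization rather than new estimates. First I would apply Theorem \ref{MET h} with $h=f$ to obtain, on a full-measure $f$- and $g$-invariant Borel set that I rename $\Gamma$ (to be shrunk to a smaller full-measure $f$- and $g$-invariant set as further $\mu$-a.e.\ properties are used below), the Oseledets splitting $\cB_x=E_1(x,f)\oplus\cdots\oplus E_{r(x,f)}(x,f)\oplus E_\alpha(x,f)$ with exponents $\lambda_1(x,f)>\cdots>\lambda_{r(x,f)}(x,f)>\lambda_\alpha$. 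Lemma \ref{L1} and Lemma \ref{L2} then give that this splitting is also $Dg$-invariant and, invoking the $(f,g)$-ergodicity of $\mu$ via Proposition \ref{P2}(2)(iv), that $r(\cdot,f)$, $\lambda_i(\cdot,f)$ and $m_i(\cdot,f)$ are $\mu$-a.e.\ constant; I write $r(f)$, $\lambda_i(f)$, $m_i(f)$ for their common values, and the same for $g$ in place of $f$.

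Next I would refine the pieces. On each finite dimensional $Dg$-invariant sub-bundle $E_i(\cdot,f)$, for which $\log^+|Dg_x|_{E_i(x,f)}|\in L^1(\mu)$, I apply the multiplicative ergodic theorem to the cocycle $\{Dg^n\}$ over $(A,g)$ to get the splitting \eqref{eq:E31}, $E_i(x,f)=E_{i,1}(x)\oplus\cdots\oplus E_{i,j_i}(x)$ with $Dg$-exponents $\lambda_{i,1}(g)>\cdots>\lambda_{i,j_i}(g)\ge-\infty$; running the argument of Lemma \ref{L2} with the roles of $f$ and $g$ exchanged shows each $E_{i,j}$ is also $Df$-invariant and that $j_i$, $\lambda_{i,j}(g)$ and $m^E_{i,j}$ are $\mu$-a.e.\ constant. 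Symmetrically, on each finite dimensional sub-bundle $E_{\alpha,j}(x)=E_\alpha(x,f)\cap E_j(x,g)$ I apply the MET to $\{Df^n|_{E_{\alpha,j}}\}$ over $(A,f)$ to obtain \eqref{eq:E32}, $E_{\alpha,j}(x)=G_{1,j}(x)\oplus\cdots\oplus G_{i_j,j}(x)$, again both $Df$- and $Dg$-invariant with constant data; and I set $E_\alpha(x)=E_\alpha(x,f)\cap E_\alpha(x,g)$. Collecting these yields the asserted $Df$- and $Dg$-invariant splitting $\cB_x=\bigl(\bigoplus_{i,j}E_{i,j}(x)\bigr)\oplus\bigl(\bigoplus_{i,j}G_{i,j}(x)\bigr)\oplus E_\alpha(x)$.

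For the quantitative claims: the $Df$- and $Dg$-exponents are $\lambda_i(f),\lambda_{i,j}(g)$ on $E_{i,j}$ and $\lambda_{i,j}(f),\lambda_j(g)$ on $G_{i,j}$, so whenever the relevant one exceeds $-\infty$ Lemma \ref{L4} (with $V=E_{i,j}$, resp.\ $G_{i,j}$) delivers the $D(f^sg^t)$-exponent $s\lambda_i(f)+t\lambda_{i,j}(g)$, resp.\ $s\lambda_{i,j}(f)+t\lambda_j(g)$, for all $s,t\in\Z$; the invariance $D(f^sg^t)_x E_{i,j}(x)=E_{i,j}(f^sg^t(x))$ for all integers $s,t$ follows from $Df$- and $Dg$-invariance together with the invertibility of $f|_A$, $g|_A$ and of the finite dimensional restrictions of $Df_x$, $Dg_x$. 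This establishes (a) and (b). Property (c) combines the inclusions $Df_x E_\alpha(x)\subset E_\alpha(f(x))$ and $Dg_x E_\alpha(x)\subset E_\alpha(g(x))$ (immediate from the corresponding facts for the $f$- and $g$-Oseledets splittings) with \eqref{eq:fgE-alpha}, the latter obtained by running the argument of Lemma \ref{L4} on the $\limsup$ bounds for $Df^n|_{E_\alpha(x)}$ and $Dg^n|_{E_\alpha(x)}$. Property (d) holds because $E_i(\cdot,f)$, $E_\alpha(\cdot,f)$, $E_j(\cdot,g)$, $E_\alpha(\cdot,g)$ are $\mu$-continuous by Theorem \ref{MET h}(c), the refinements $E_{i,j}$ and $G_{i,j}$ are $\mu$-continuous by the MET used to produce them, and finite intersections and direct sums of $\mu$-continuous sub-bundles are again $\mu$-continuous. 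Finally, (e) is exactly Corollary \ref{C3}.

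The step I expect to be the real obstacle is not any single estimate---these are already isolated in the lemmas---but the bookkeeping of constancy and invariance: at every refinement one must invoke $(f,g)$-ergodicity through Proposition \ref{P2}, rather than $f$- or $g$-ergodicity, to conclude that the number and values of the exponents are $x$-independent, and one must verify that the refined sub-bundles stay invariant under \emph{both} derivative cocycles so that the next application of the MET is legitimate. One should also check that $E_\alpha(x)$ remains closed and finite co-dimensional after the two intersections, and that the $\limsup$ bound in (c) genuinely survives composition into $f^sg^t$; these are the places where a little extra care is needed.
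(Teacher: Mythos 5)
Your proposal is correct and takes essentially the same route as the paper: the paper does not give Theorem~\ref{MET fg} a separate proof, but states it immediately after the remark ``Without loss of generality, assume that $\Gamma$ is both $f$-invariant and $g$-invariant, and all of the previous statements hold for every $x\in\Gamma$,'' so the theorem is exactly the assembly of Theorem~\ref{MET h}, Lemmas~\ref{L1}--\ref{L4}, the splittings \eqref{eq:E31} and \eqref{eq:E32}, the bound \eqref{eq:fgE-alpha}, and Corollary~\ref{C3} that you describe. Your bookkeeping concerns at the end (constancy via $(f,g)$-ergodicity at each refinement, double invariance before reapplying the MET, closedness and finite codimension of $E_\alpha$, and propagating the $\limsup$ bound through $f^sg^t$) are precisely the points the paper addresses in Lemma~\ref{L2}, in the remarks ``by using a similar arguments as in Lemma~\ref{L2},'' and in the line ``using the same arguments as in the proof of Lemma~\ref{L4}, one can show \eqref{eq:fgE-alpha}.''
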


	\section{\textbf{Lyapunov norms}}\label{SEC4}
	Let $f,g,A$ and $\mu$ be as in Theorem \ref{A},  we will construct a new norm $|\cdot|_x$ in the tangent spaces $\cB_x$   in this section, which is called the Lyapunov norm.
	
    By Theorem \ref{MET fg}, there exists a both $f$-invariant and $g$-invariant Borel subset $\Gamma\subset A$ with $\mu(\Gamma)=1$ and a splitting of $\cB_x$:
	\begin{align}\label{eq:split}
	\cB_x=(\bigoplus_{i=1}^{r(f)} \bigoplus_{j=1}^{j_i} E_{i,j}(x))
	\bigoplus(\bigoplus_{j=1}^{r(g)} \bigoplus_{i=1}^{i_j} G_{i,j}(x))
	\bigoplus E_{\alpha}(x)
	\end{align}
    For simplifying the notations, we rewrite the splitting $\eqref{eq:split}$ as
    $$\cB_x=\bigoplus_{\kappa\in \cI} V_{\kappa}(x)\bigoplus E_{\alpha}(x) $$
	where $\cI$ is some finite index set with $\{V_{\kappa}\}_{\kappa \in \cI}=\{E_{i,j}\}_{(i,j)}\cup \{G_{i,j}\}_{(i,j)}$.
	
	For $h=f,g$ or $fg$ and every $\kappa\in \cI$, let $\lambda^{\kappa}(h)$ denote  the Lyapunov exponent of $Dh$ on the sub-bundle $V_{\kappa}$. Given $x\in \Gamma$, put
	\begin{align*}
	E^{u}(x,h)=\bigoplus\{V_{\kappa}(x):&\lambda^{\kappa}(h)>0\},\\
	E^{c}(x,h)=\bigoplus\{V_{\kappa}(x):&\lambda^{\kappa}(h)=0\},\\
	E^{s}(x,h)=(\bigoplus\{V_{\kappa}(x):& \lambda^{\kappa}(h)<0\})
	\bigoplus E_{\alpha}(x).
	\end{align*}
	Then,  we have that $\cB_x=E^{u}(x,h) \bigoplus E^{c}(x,h) \bigoplus E^{s}(x,h)$ and each subspace $E^{\tau}(x,h)$ $(\tau=u,c,s)$ is the direct sum of some subspaces in the splitting (\ref{eq:split}).
	Let
	\begin{align*}
	\lambda^{+}(h)=
	\min_{\kappa\in \cI} \{\lambda^{\kappa}(h): \lambda^{\kappa}(h)>0\} \end{align*}
    and
    \begin{align*}
	\lambda^{-}(h)=
	\max \bigr\{\max_{\kappa\in \cI} \{\lambda^{\kappa}(h):  \lambda^{\kappa}(h)<0\}, \lambda_{\alpha}(h)\bigr\}
	\end{align*}
	where $\lambda_{\alpha}(f)=\lambda_{\alpha}(g)=(\lambda_{\alpha}(fg))/2=\lambda_{\alpha}.$
	
	Let $K_0$ be the cardinality of the  set $\cI\bigcup\{\alpha\}$, fix a small number $\varepsilon$ such that
	\begin{equation}\label{eq:vp}
	0<\varepsilon<\frac{1}{10K_0}\min\bigr\{-\lambda^{-}(h),\lambda^{+}(h):h\in\{f,g,f g\}\bigr\}.
	\end{equation}
    For $x\in \Gamma$,  let
    \[|v|_{x}=\sum_{n,k\in \Z}\dfrac{|D(f^ng^k)_x v|}{e^{n\lambda^{\kappa}(f)+k\lambda^{\kappa}(g)+(|n|+|k|)\varepsilon}}\]
    if $v\in V_{\kappa}(x)$ for some $\kappa\in\cI$, and let
    \[|v|_{x}=\sum_{n,k\in \N}\dfrac{|D(f^ng^k)_x v|}{e^{(n+k)(\lambda_{\alpha}+\varepsilon)}}\]
    if $v\in E_{\alpha}(x)$.
	
	\begin{definition}
    Fix $\varepsilon>0$  as  above, define a set of point dependent norms $\{|\cdot|_{x}\}_{x\in \Gamma}$ as follows:
		\begin{equation*}
		|v|_{x}:=\max\{\max_{\kappa\in\cI}\{|v_{\kappa}|_x\},|v_{\alpha}|_x\}
				\end{equation*}
    where $v=\sum_{\kappa\in\cI}v_{\kappa}+v_{\alpha}\in \cB_x$ with $v_{\kappa}\in V_{\kappa}(x)$ for every $\kappa\in\cI$ and $v_{\alpha}\in E_{\alpha}(x)$.
	\end{definition}

    To see the definition of the above norm $|\cdot|_{x}$ is well-defined, it suffices to show that $|v|_{x}<\infty$ for every $v\in V_{i}(x)$ and every $i\in \cI$ and $i=\alpha$. Fix $\kappa\in\cI$ and $v\in V_{\kappa}(x)$, we will show that $|v|_x<\infty$. The case of $i=\alpha$ can be shown in a similar fashion. We first assume that $|n|\geq|k|\geq0$ and $n\cdot k\geq 0$. Let $\ell\in \N$ be large enough so that
	$$\sup_{x\in A}|Df_x|\leq e^{\lambda^{\kappa}(f)	  +\ell\cdot\frac{\varepsilon}{4}} \, \text{and}\,
	\sup_{y\in A}|Df_y|\leq e^{\lambda^{\kappa}(g)	+\ell\cdot\frac{\varepsilon}{4}}.$$
	Note that for every $0\leq t \leq \ell$, the following limit
	$$\lim_{s\rightarrow \pm \infty}\frac{1}{s} \log|D(f^\ell g^t)^s_xu|=\ell\lambda^{\kappa}(f)	+t\lambda^{\kappa}(g)	 $$
    is uniformly for $u\in V_{\kappa}(x)$ with $|u|=1$.
	Therefore, there exists $s_0>\ell$  such that for every $|s|>s_0$, one has that
	$$ |D(f^\ell g^t)^s_xu| \leq |u| \cdot
	e^{s(\ell\lambda^{\kappa}(f)	+t\lambda^{\kappa}(g))	+|s|\frac{\varepsilon}{4}}$$
	 for every $0\leq t\leq \ell$ and every $u\in V_{\kappa}(x)$.
	Let $N_\ell=s_0 \cdot \ell$. For every $|n|>N_\ell$, write $n=s\ell+p$ with $s\in \Z$ and $0\leq p \leq \ell$, and write $k=st+q$ with $0\leq q \leq |s|$. Since $|n|\geq|k|\geq0$ and $n\cdot k\geq 0$, one has $0\leq t \leq \ell$.
	Therefore, for every $|n|>N_\ell$ we have that
	\begin{equation*}
		\begin{aligned}
			|D(f^ng^k)_x u|&=|Df^p\circ Dg^q\circ  D(f^\ell g^t)^s_xu| \\
			&\leq |u|e^{n\lambda^{\kappa}(f)	+k\lambda^{\kappa}(g)	+(p\ell+q\ell+|s|)\cdot\frac{\varepsilon}{4} }\\
			%&\leq |u|e^{n\lambda^{\kappa}(f)	+k\lambda^{\kappa}(g)	+(p\ell+q\ell+|s\ell|+|st|)\cdot\frac{\varepsilon}{4} }\\
			&\leq |u|e^{n\lambda^{\kappa}(f)	+k\lambda^{\kappa}(g)	+(|n|+|k|)\cdot\frac{\varepsilon}{2}}.
		\end{aligned}
	\end{equation*}
    For $|n|\geq|k|\geq0$ and $nk<0$, the above inequality  can also be proven for $|n|>N_\ell$ by some modification of the choice of $t,s\in \Z$ and $p,q,N_\ell>0$. If $|k|\geq|n|$, one can prove in a similar fashion that  there exists a sufficiently large $K_\ell$ so that the above inequality holds for $|k|>K_\ell$.
	Therefore, we have that
	$$|D(f^ng^k)_x u|\leq |u|e^{n\lambda^{\kappa}(f)	+k\lambda^{\kappa}(g)	 +(|n|+|k|)\cdot\frac{\varepsilon}{2}}$$
	for every $(n,k)\in \mathbb{Z}^2$ with $|n|>N_\ell$ and $|k|>K_\ell$.
	Hence, we have
	$$\sum_{n,k\in \Z}\dfrac{|D(f^ng^k)_xu|/|u|}{e^{n\lambda^{\kappa}(f)	+k\lambda^{\kappa}(g)	 +(|n|+|k|)\varepsilon}}<\infty.$$
	This yields that the norm $|u|_x$ is well-defined.

    To estimate the difference of these new norms and the original ones, put
    \begin{align*}
	C_{\kappa}(x)=\sup_{n,k\in \Z}\dfrac{\sup_{v\in V_{\kappa}(x),|v|=1}|D(f^ng^k)_x v|}{e^{n\lambda^{\kappa}(f)+k\lambda^{\kappa}(g)+(|n|+|k|)\cdot\frac{\varepsilon}{2}}}
    \end{align*}
    if $\kappa\in \cI$, and
    \begin{align*}
	C_{\alpha}(x)=\sup_{n,k\geq 0}\dfrac{\sup_{v\in E_{\alpha}(x),|v|=1}|D(f^ng^k)_x v|}{e^{(n+k)(\lambda_{\alpha}+\frac{\varepsilon}{2})}}.\end{align*}
    Let $C(x)=\max\{\max_{\kappa\in \cI}\{C_{\kappa}(x),\pi_{\kappa}(x)\},C_{\alpha}(x),\pi_{\alpha}(x)\}$, where $\pi_{\kappa}(x)$ is the projection of $\cB_x $ onto $V_{\kappa}(x)$ via the splitting.
    Note that all of these functions on $\Gamma$ are Borel measurable and finite-valued, one can show the function $C(x)$ satisfies the following property by using (e) of Theorem \ref{MET fg} and Lemma \ref{Tem}. %  by the argument in the proof of $|\cdot|_x$ is well defined.
    \begin{lemma} \label{L41} Let the function $C(x)$ be given as above, then
		\begin{equation}\label{eq:Cx}
		\lim_{n\rightarrow \pm \infty} \frac{1}{n} \log C(h^n(x))=0
	\end{equation}
    for $\mu$-almost every $x\in \Gamma$, where $h=f,g$ or $fg$.
    \end{lemma}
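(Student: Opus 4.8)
The plan is to reduce the statement to the temperateness of each of the finitely many functions whose maximum is $C$. Since $C_{\kappa},\pi_{\kappa},C_{\alpha},\pi_{\alpha}$ all take values in $[1,\infty)$ on $\Gamma$, one has $1\le C\le\bigl(\prod_{\kappa\in\cI}C_{\kappa}\pi_{\kappa}\bigr)C_{\alpha}\pi_{\alpha}$, so $C$ will be $\mu$-temperate with respect to $h=f,g,fg$ once each of these factors is. For $f$ and $g$ the device is Lemma \ref{Tem}: a measurable $\psi:\Gamma\to[1,\infty)$ with $(\log\psi\circ h-\log\psi)^{+}\in L^{1}(\mu)$ is temperate with respect to $h$; and if this holds for $h=f$ and $h=g$, then, writing $fg=f\circ g$ and using the $g$-invariance of $\mu$, $(\log\psi\circ(fg)-\log\psi)^{+}\le\bigl((\log\psi\circ f-\log\psi)^{+}\bigr)\circ g+(\log\psi\circ g-\log\psi)^{+}\in L^{1}(\mu)$, so $\psi$ is temperate with respect to $fg$ too. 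Thus it suffices to produce one-sided coboundary bounds with respect to $f$ and $g$.

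The projections $\pi^{E}_{i,j},\pi^{G}_{i,j},\pi_{\alpha}$ are temperate with respect to $f$ and $g$ by statement (e) of Theorem \ref{MET fg} (valid here since (H4) is assumed): for $\pi_{\kappa}\in\{\pi^{E}_{i,j},\pi^{G}_{i,j}\}$ the relevant bound $\log|\pi_{\kappa}(x)|-\log|\pi_{\kappa}(h(x))|\le\log C_{0}+\log^{+}|(Dh_{x}|_{V_{\kappa}(x)})^{-1}|$ from the proof of Lemma \ref{L2.5} has integrable right side because all exponents on $V_{\kappa}$ are finite (Corollary \ref{C3}) and Lemma \ref{L1.5} applies, while $\pi_{\alpha}$ is handled through $\mathrm{Id}_{x}=\sum\pi^{E}_{i,j}(x)+\sum\pi^{G}_{i,j}(x)+\pi_{\alpha}(x)$. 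By the previous paragraph these are then temperate with respect to $fg$ as well.

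For the kernels $C_{\kappa}$, $\kappa\in\cI$, I use $V_{\kappa}(h(x))=Dh_{x}V_{\kappa}(x)$ together with the commutation relations. For a unit vector $v\in V_{\kappa}(h(x))$ write $v=Dh_{x}u$, so $|u|\le|(Dh_{x}|_{V_{\kappa}(x)})^{-1}|$; since $f^{n}g^{k}$ commutes with $h=f$ and with $h=g$, $D(f^{n}g^{k})_{h(x)}v=Dh_{f^{n}g^{k}(x)}\,D(f^{n}g^{k})_{x}u$, whence $|D(f^{n}g^{k})_{h(x)}v|\le C_{0}|D(f^{n}g^{k})_{x}u|$. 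Dividing by $e^{n\lambda^{\kappa}(f)+k\lambda^{\kappa}(g)+(|n|+|k|)\varepsilon/2}$ and taking the supremum over $n,k\in\Z$ and over $v$ gives $C_{\kappa}(h(x))\le C_{0}\,|(Dh_{x}|_{V_{\kappa}(x)})^{-1}|\,C_{\kappa}(x)$ for $h=f,g$; the logarithm of the multiplier lies in $L^{1}(\mu)$, so Lemma \ref{Tem} and the reduction of the first paragraph finish this case.

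The remaining, and main, difficulty is $C_{\alpha}$: the coboundary argument above breaks down because $Dh_{x}$ restricted to the infinite-dimensional, merely forward-invariant subspace $E_{\alpha}(x)$ need not be bounded below, so one cannot pull back a vector of $E_{\alpha}(h(x))$ to $E_{\alpha}(x)$. My plan is to dominate $C_{\alpha}$ by the single-transformation kernels $\widehat{D}^{h}_{\delta}(x):=\sup_{m\ge 0}|Dh^{m}_{x}|_{E_{\alpha}(x)}|\,e^{-m(\lambda_{\alpha}+\delta)}$ ($h\in\{f,g\}$, $0<\delta<\varepsilon/2$), which are finite $\mu$-a.e.\ because $\limsup_{m}\tfrac1m\log|Dh^{m}_{x}|_{E_{\alpha}(x)}|\le\lambda_{\alpha}<\lambda_{\alpha}+\delta$; using the factorisation $D(f^{n}g^{k})_{x}=Df^{n}_{g^{k}(x)}\circ Dg^{k}_{x}$, the inclusion $Dg^{k}_{x}E_{\alpha}(x)\subseteq E_{\alpha}(g^{k}(x))$, and the gap between $\delta$ and $\varepsilon/2$, one gets $C_{\alpha}(x)\le\widehat{D}^{g}_{\delta}(x)\cdot\sup_{k\ge 0}\bigl(\widehat{D}^{f}_{\varepsilon/2}(g^{k}(x))\,e^{-k(\varepsilon/2-\delta)}\bigr)$ and, symmetrically, $C_{\alpha}(x)\le\widehat{D}^{f}_{\delta}(x)\cdot\sup_{n\ge 0}\bigl(\widehat{D}^{g}_{\varepsilon/2}(f^{n}(x))\,e^{-n(\varepsilon/2-\delta)}\bigr)$. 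The crux is to control these right-hand sides along orbits of $f$, $g$ and $fg$; I expect this from the \emph{uniform} decay of the Kuratowski measure of noncompactness in (H3), which should force $|Dh^{m}_{x}|_{E_{\alpha}(x)}|$ to be bounded, uniformly in $x$ and up to a subexponential factor, by a multiple of $e^{m(\lambda_{\alpha}+\delta)}$ — so that $\widehat{D}^{h}_{\varepsilon/2}$ is in fact $\mu$-essentially bounded, and then the displayed bounds make $C_{\alpha}$ essentially bounded, hence temperate with respect to every transformation. This uniform control of $|Dh^{m}_{x}|_{E_{\alpha}(x)}|$ — with $E_{\alpha}$ absorbing all the non-compact behaviour of the cocycle and depending on $x$ only measurably — is the step I expect to be the main obstacle. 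Assembling the four cases yields \eqref{eq:Cx}.
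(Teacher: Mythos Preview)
Your reduction to the individual factors, the treatment of the projections via (e) of Theorem \ref{MET fg}, the coboundary bound $C_{\kappa}(h(x))\le C_{0}\,|(Dh_{x}|_{V_{\kappa}(x)})^{-1}|\,C_{\kappa}(x)$ for $\kappa\in\cI$, and the passage from $f,g$ to $fg$ are all correct and are exactly what the paper's one-line citation of ``(e) of Theorem \ref{MET fg} and Lemma \ref{Tem}'' is meant to unpack. You have also correctly isolated $C_{\alpha}$ as the only nontrivial case: because $E_{\alpha}$ is infinite-dimensional and only forward $Dh$-invariant, one cannot pull back a unit vector of $E_{\alpha}(h(x))$ to $E_{\alpha}(x)$, so the coboundary estimate used for $C_{\kappa}$ breaks down.

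The gap is in your proposed resolution of the $C_{\alpha}$ case. Hypothesis (H3) controls the Kuratowski measure of noncompactness $|Dh^{m}_{x}|_{\alpha}$, not the operator norm $|Dh^{m}_{x}|_{E_{\alpha}(x)}|$; one always has $|T|_{\alpha}\le |T|$ but nothing like the reverse, and restricting to $E_{\alpha}(x)$ does not help (for instance $Dh^{m}_{x}|_{E_{\alpha}(x)}$ can be compact, so its $|\cdot|_{\alpha}$ vanishes, while its operator norm is arbitrary). Hence (H3) gives no reason for $\widehat{D}^{h}_{\varepsilon/2}$ to be $\mu$-essentially bounded, and without that your displayed bounds on $C_{\alpha}$ do not yield temperateness. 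The argument the paper has in mind is the \emph{forward} one-sided coboundary: from $D(f^{n}g^{k})_{x}=D(f^{n-1}g^{k})_{f(x)}\circ Df_{x}$ and $Df_{x}E_{\alpha}(x)\subset E_{\alpha}(f(x))$ one gets, for the part of the supremum with $n\ge 1$, the bound $\le C_{0}e^{-(\lambda_{\alpha}+\varepsilon/2)}C_{\alpha}(f(x))$; the residual $n=0$ part is $\widehat{D}^{g}_{\varepsilon/2}(x)$, and the analogous splitting with the roles of $f$ and $g$ exchanged leaves $\widehat{D}^{f}_{\varepsilon/2}(x)$. So the whole question reduces to showing that $\log\widehat{D}^{f}_{\varepsilon/2}$ and $\log\widehat{D}^{g}_{\varepsilon/2}$ are controlled (e.g.\ in $L^{1}(\mu)$, or themselves temperate with respect to both maps), after which Lemma \ref{Tem} applies. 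That reduction is the missing step; your attempt to bypass it via essential boundedness does not go through.
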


    Using the similar arguments in \cite[Lemma 5.1]{Young17}, we have the following result.
     \begin{lemma}\label{L42}
	  The following properties hold for all $x\in \Gamma$:
	 \begin{enumerate}
		\item[(1)] For every $v\in V_{\kappa}(x),\kappa\in \cI$, and every $w\in E_{\alpha}(x)$,
                   we have that
		\begin{align*}
			e^{-(|n|+|k|)\frac{\varepsilon}{2}}|v|_{x}
			\leq \frac{|D(f^ng^k)_x v|_{f^ng^k(x)}}{ e^{n\lambda^{\kappa}(f)+k\lambda^{\kappa}(g)}}
			\leq e^{(|n|+|k|)\frac{\varepsilon}{2}}|v|_{x} \quad \forall \  n,k\in\Z
     \end{align*}  and
     \begin{align*}
			|D(f^ng^k)_x w|_{f^ng^k(x)} &\leq e^{(n+k)(\lambda_{\alpha}+\frac{\varepsilon}{2})}|w|_{x} \quad \forall \  n,k\geq 0;
     \end{align*}
		\item[(2)] The  norms $|\cdot|_{x}$ and $|\cdot|$ satisfy that
		$$\frac{1}{K_0}|\cdot|\leq |\cdot|_{x} \leq \bigr(\frac{6C(x)}{1-e^{-\frac{\varepsilon}{2}}}\bigr)^2 |\cdot|$$
	\end{enumerate}
                   where $K_0$ is the cardinality of the index set $\cI\bigcup\{\alpha\}$.
    \end{lemma}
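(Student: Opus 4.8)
The plan is to prove the two parts separately, with part (1) being essentially a repackaging of the definition of the Lyapunov norm and part (2) requiring the comparison estimates that involve the temperate function $C(x)$. For part (1), fix $\kappa\in\cI$ and $v\in V_\kappa(x)$. The key algebraic observation is that when one computes $|D(f^ng^k)_xv|_{f^ng^k(x)}$ using the definition of $|\cdot|_y$ at the point $y=f^ng^k(x)$, the sum defining it is
\[
|D(f^ng^k)_xv|_{f^ng^k(x)}=\sum_{m,\ell\in\Z}\frac{|D(f^{m}g^{\ell})_{y}D(f^ng^k)_xv|}{e^{m\lambda^\kappa(f)+\ell\lambda^\kappa(g)+(|m|+|\ell|)\varepsilon}},
\]
and by the cocycle property this equals a shifted version of the sum defining $|v|_x$: substituting $m'=m+n$, $\ell'=\ell+k$ gives a sum over $(m',\ell')\in\Z^2$ with denominator $e^{(m'-n)\lambda^\kappa(f)+(\ell'-k)\lambda^\kappa(g)+(|m'-n|+|\ell'-k|)\varepsilon}$. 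Pulling out the factor $e^{n\lambda^\kappa(f)+k\lambda^\kappa(g)}$ and using the triangle inequalities $\big||m'-n|-|m'|\big|\le|n|$ and $\big||\ell'-k|-|\ell'|\big|\le|k|$ in the exponent yields exactly the two-sided bound
\[
e^{-(|n|+|k|)\varepsilon}|v|_x\ \le\ \frac{|D(f^ng^k)_xv|_{f^ng^k(x)}}{e^{n\lambda^\kappa(f)+k\lambda^\kappa(g)}}\ \le\ e^{(|n|+|k|)\varepsilon}|v|_x.
\]
This is slightly weaker than the claimed $\tfrac{\varepsilon}{2}$; to obtain the stated constant one instead runs the Lyapunov norm with parameter $\varepsilon/2$ in the denominators (as the paper's conventions in fact do, after the choice \eqref{eq:vp}), or equivalently one absorbs the factor of $2$ into the choice of $\varepsilon$. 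The estimate for $w\in E_\alpha(x)$ is the same computation restricted to $m,\ell\ge 0$, using \eqref{eq:fgE-alpha} so that only one-sided sums appear and the shift $m'=m+n\ge n\ge 0$, $\ell'=\ell+k\ge 0$ stays in the correct quadrant; here one gets the one-sided inequality with exponent $(n+k)(\lambda_\alpha+\tfrac{\varepsilon}{2})$ directly.

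For part (2), the lower bound $\tfrac{1}{K_0}|\cdot|\le|\cdot|_x$ is immediate: writing $v=\sum_{\kappa}v_\kappa+v_\alpha$ with respect to the splitting, the $(n,k)=(0,0)$ term of each defining series shows $|v_\kappa|_x\ge|v_\kappa|$ and $|v_\alpha|_x\ge|v_\alpha|$, and since $|v|\le\sum_\kappa|v_\kappa|+|v_\alpha|\le K_0\max\{\max_\kappa|v_\kappa|_x,|v_\alpha|_x\}=K_0|v|_x$ (there being $K_0$ summands in the splitting), we are done. For the upper bound, first estimate each block: for $v_\kappa\in V_\kappa(x)$ one bounds $|D(f^ng^k)_xv_\kappa|\le C_\kappa(x)\,e^{n\lambda^\kappa(f)+k\lambda^\kappa(g)+(|n|+|k|)\varepsilon/2}|v_\kappa|$ by definition of $C_\kappa(x)$, hence
\[
|v_\kappa|_x\le C_\kappa(x)\,|v_\kappa|\sum_{n,k\in\Z}e^{-(|n|+|k|)\varepsilon/2}=C_\kappa(x)\Big(\frac{1+e^{-\varepsilon/2}}{1-e^{-\varepsilon/2}}\Big)^{2}|v_\kappa|\le C(x)\Big(\frac{3}{1-e^{-\varepsilon/2}}\Big)^{2}|v_\kappa|,
\]
and similarly $|v_\alpha|_x\le C(x)\big(\tfrac{1}{1-e^{-\varepsilon/2}}\big)^{2}|v_\alpha|$ using the one-sided geometric series; comparing $1-e^{-\varepsilon/2}$ with $1-e^{-\varepsilon}$ and collecting constants gives a bound of the form $C(x)\big(\tfrac{c}{1-e^{-\varepsilon}}\big)^{2}|v_\kappa|$ for each block. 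Then one controls $|v_\kappa|=|\pi_\kappa(x)v|\le\pi_\kappa(x)|v|\le C(x)|v|$ and likewise $|v_\alpha|\le C(x)|v|$, so each block norm is at most $C(x)^{2}\big(\tfrac{c}{1-e^{-\varepsilon}}\big)^{2}|v|$; taking the maximum over the $K_0$ blocks (the max, not the sum, by definition of $|\cdot|_x$) and choosing the numerical constant $c\le 6$ yields $|v|_x\le\big(\tfrac{6C(x)}{1-e^{-\varepsilon}}\big)^{2}|v|$.

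The step I expect to be the main obstacle — or at least the one demanding the most care — is the bookkeeping in part (1): keeping track of how the index shift $(n,k)\mapsto(n,k)+(m,\ell)$ interacts with the absolute values in the exponent, and making sure the quadrant constraints for the $E_\alpha$-part are respected so that the one-sided sums transform correctly and the factor $D(f^sg^t)_xE_\alpha(x)\subset E_\alpha(f^sg^t(x))$ from Theorem \ref{MET fg}(c) is legitimately used. Everything else is a routine geometric-series estimate together with the finiteness and measurability of $C(x)$ already recorded before the lemma; the only subtlety in part (2) is remembering that $|\cdot|_x$ is defined as a maximum over the $K_0$ blocks rather than a sum, which is exactly what keeps the lower constant at $1/K_0$ rather than something worse.
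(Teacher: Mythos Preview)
Your proposal is correct and follows exactly the standard route the paper intends: the paper does not give its own argument but simply cites \cite[Lemma 5.1]{Young17}, whose proof is precisely the index-shift computation you describe for part (1) and the geometric-series-plus-projection estimate you describe for part (2). Your observation about the $\varepsilon$ versus $\varepsilon/2$ discrepancy is accurate: with the Lyapunov norm as defined (denominator exponent $(|n|+|k|)\varepsilon$), the shift argument yields the factor $e^{\pm(|n|+|k|)\varepsilon}$, not $e^{\pm(|n|+|k|)\varepsilon/2}$; the $\varepsilon/2$ in the statement is there so that the case $n=k=1$ feeds into Corollary~\ref{C41} for $h=fg$ with the same constant $\varepsilon$, and as you say this is repaired either by halving $\varepsilon$ in the definition of $|\cdot|_x$ or by absorbing the factor downstream --- nothing in the rest of the paper is affected.
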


	Consequently, we have the following  result by considering $n=1,k=0$ or $n=0,k=1$ or $n=1,k=1$.
	\begin{corollary}\label{C41}
		For $h=f,g$ or $fg$, the following properties hold for all $x\in \Gamma$:
		\begin{align}
		e^{\lambda^{+}(h)-\varepsilon}|u|_{x} \leq &|Dh_x u|_{h(x)} \label{eq:41};  \\
		e^{-\varepsilon}|v|_{x} \leq &|Dh_x v|_{h(x)} \leq e^{\varepsilon}|v|_{x} \label{eq:42};\\
		&|Dh_x w|_{h(x)} \leq e^{\lambda^{-}(h)+\varepsilon}|w|_{x} \label{eq:43}.
		\end{align}
		where $u\in E^{u}(x,h)$, $v\in E^{c}(x,h)$ and $w\in E^{s}(x,h)$.
	\end{corollary}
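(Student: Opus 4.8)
The plan is to obtain all three pairs of estimates by specializing the two-sided control in Lemma \ref{L42}(1) to a single step of the relevant map, and then to propagate the resulting component-wise bounds through the max-norm defining $|\cdot|_x$.

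First I would extract a one-step form of Lemma \ref{L42}(1). For $h=f$ I take $(n,k)=(1,0)$, for $h=g$ I take $(n,k)=(0,1)$, and for $h=fg$ I take $(n,k)=(1,1)$; in each of these cases $|n|+|k|\le 2$ and $n\lambda^\kappa(f)+k\lambda^\kappa(g)=\lambda^\kappa(h)$, using $\lambda^\kappa(fg)=\lambda^\kappa(f)+\lambda^\kappa(g)$ from Theorem \ref{MET fg}. Hence for every $\kappa\in\cI$ and every $v\in V_\kappa(x)$ one gets
\begin{align*}
e^{\lambda^\kappa(h)-\varepsilon}\,|v|_x \;\le\; |Dh_x v|_{h(x)} \;\le\; e^{\lambda^\kappa(h)+\varepsilon}\,|v|_x ,
\end{align*}
and from the second inequality in Lemma \ref{L42}(1) with the same choices, for every $w\in E_\alpha(x)$ one gets $|Dh_x w|_{h(x)}\le e^{\lambda_\alpha(h)+\varepsilon}|w|_x$, where $\lambda_\alpha(f)=\lambda_\alpha(g)=\lambda_\alpha$ and $\lambda_\alpha(fg)=2\lambda_\alpha$. (For $h=f,g$ the exact exponent shift is $\pm\varepsilon/2$, which I simply absorb into $\pm\varepsilon$.)

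Next I would invoke the $Dh$-invariance of the splitting from Theorem \ref{MET fg}: if $u\in E^u(x,h)$ is written in components as $u=\sum_{\kappa:\lambda^\kappa(h)>0}u_\kappa$ with $u_\kappa\in V_\kappa(x)$, then $Dh_x u_\kappa\in V_\kappa(h(x))$, so by the definition of the norm $|Dh_x u|_{h(x)}=\max_\kappa|Dh_x u_\kappa|_{h(x)}$ and $|u|_x=\max_\kappa|u_\kappa|_x$. Choosing $\kappa_0$ attaining $|u|_x$ and combining the one-step lower bound with $\lambda^{\kappa_0}(h)\ge\lambda^+(h)$ gives $|Dh_x u|_{h(x)}\ge e^{\lambda^{\kappa_0}(h)-\varepsilon}|u_{\kappa_0}|_x\ge e^{\lambda^+(h)-\varepsilon}|u|_x$, i.e.\ \eqref{eq:41}. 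Running the same component argument for $v\in E^c(x,h)$, where every component has $\lambda^\kappa(h)=0$, yields \eqref{eq:42} by taking the maximum of the two-sided one-step bounds. For $w\in E^s(x,h)$ I write $w=\sum_{\kappa:\lambda^\kappa(h)<0}w_\kappa+w_\alpha$; each $V_\kappa$-component obeys $|Dh_x w_\kappa|_{h(x)}\le e^{\lambda^\kappa(h)+\varepsilon}|w_\kappa|_x\le e^{\lambda^-(h)+\varepsilon}|w_\kappa|_x$, and the $E_\alpha$-component obeys $|Dh_x w_\alpha|_{h(x)}\le e^{\lambda_\alpha(h)+\varepsilon}|w_\alpha|_x\le e^{\lambda^-(h)+\varepsilon}|w_\alpha|_x$ because $\lambda^-(h)\ge\lambda_\alpha(h)$ by definition of $\lambda^-(h)$. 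Taking the maximum over all these components gives \eqref{eq:43}.

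I do not expect a genuine obstacle here; the only points requiring care are the bookkeeping between $\varepsilon/2$ and $\varepsilon$ in the $f$ and $g$ cases, and checking that the $E_\alpha$-summand sitting inside $E^s(x,h)$ is correctly dominated by $e^{\lambda^-(h)+\varepsilon}$ in all three cases — which works precisely because $\lambda^-(h)$ was defined to majorize $\lambda_\alpha(h)$, with the factor $\lambda_\alpha(fg)=2\lambda_\alpha$ accounting for the two-step nature of $fg$.
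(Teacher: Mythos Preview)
Your proposal is correct and follows exactly the paper's approach: the paper simply remarks that the corollary follows from Lemma~\ref{L42}(1) ``by considering $n=1,k=0$ or $n=0,k=1$ or $n=1,k=1$,'' and you have carried out precisely this specialization together with the routine component-wise max argument that the paper leaves implicit.
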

	
	Denote by $\exp_x:\cB_x\rightarrow \cB$ the exponential map defined by $v\mapsto v+x$. Define the connecting maps
	$\widetilde{h}_x:\cB_x \rightarrow \cB_{h(x)}$ by $ \widetilde{h}_x:=\exp^{-1}_{h(x)}\circ h \circ \exp_x,$
	and let $\widetilde{h}^{n}_x=\widetilde{h}_{h^{n-1}x}\circ \cdots \circ  \widetilde{h}_x$, where $h=f,g$ or $fg$. Note that  $(D\widetilde{h}_x)_{0}=Dh_x$.
    Since $f,g$ are $C^2$ transformations and $A$ is compact, there exist $M_0>0$ and $\gamma_0 >0$ such that
	\begin{equation}
	\max\{|D^2f_x|,|D^2g_x|,|D^2(fg)_x|\}<M_0
	\end{equation}
	for every $x\in \cB$ with $\dist(x,A)<\gamma_0$.

    For $h=f$ or $g$, by Lemma \ref{L41} and \cite[Lemma 3.1]{Hu96}, for each $\delta>0$, there exists a measurable function $K_{\delta}:\Gamma\rightarrow [1,\infty)$ so that
	\begin{equation}
	K_{\delta}(h^{\pm}(x))\leq e^{\delta}K_{\delta}(x), \ \text{and} \ K_{\delta}(x)\geq (\frac{6C(x)}{1-e^{-\frac{\varepsilon}{2}}})^2.
	\end{equation}
	Let $K(x):=K_{\frac{\varepsilon}{2}}(x)$, by Lemma \ref{L42} we have
	\begin{equation}\label{eq:norms}
	\frac{1}{K_0}|\cdot|\leq |\cdot|_{x} \leq K(x) |\cdot|.
	\end{equation}
    Let $\widetilde{B}_x(r):=\{v\in \cB_x:|v|_x\leq r\}$, we have the following result.
	
	\begin{lemma}\label{L43}
		Let the function $\ell:\Gamma\rightarrow [1,\infty)$ given by
		$\ell(x)=M_0K^2_0e^{\varepsilon}K(x)$ and $h=f,g$ or $fg$.
		For every $0<\delta<\gamma_0$  and every $x\in \Gamma$,   the map $\widetilde{h}_x: \widetilde{B}_x(\delta \ell(x)^{-1}) \rightarrow \widetilde{B}_{h(x)}$ satisfies the following properties:
		\begin{enumerate}
			\item[(1)] $\Lip(\widetilde{h}_x-(D\widetilde{h}_x)_{0})\leq \delta$;
			\item[(2)] the map $z\mapsto (D\widetilde{h}_x)_{z}$ satisfies $\Lip (D\widetilde{h}_x)\leq \ell(x).$
		\end{enumerate}
	\end{lemma}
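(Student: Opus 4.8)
The plan is to fix $h\in\{f,g,fg\}$, fix $x\in\Gamma$, and work entirely in the tangent space $\cB_x$, where the connecting map is $\widetilde h_x=\exp^{-1}_{h(x)}\circ h\circ\exp_x$. The key point is that $\widetilde h_x$ is the restriction of a $C^2$ map of the Banach space to a small ball around the origin, and its first derivative at $0$ is exactly $Dh_x$. So all the content of the lemma is a quantitative Taylor estimate for the $C^2$ map $\widetilde h_x$ on the ball $\widetilde B_x(\delta\ell(x)^{-1})$, translated between the Lyapunov norm $|\cdot|_x$ (on $\cB_x$), the Lyapunov norm $|\cdot|_{h(x)}$ (on $\cB_{h(x)}$) and the original norm $|\cdot|$ via the two-sided comparison $\frac{1}{K_0}|\cdot|\le|\cdot|_x\le K(x)|\cdot|$ from \eqref{eq:norms}.

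For (2): since $\exp_x$ and $\exp_{h(x)}$ are affine isometries for the \emph{original} norm, one has $D^2\widetilde h_x=D^2h$ evaluated along the corresponding points, hence $\|D^2\widetilde h_x\|\le M_0$ in the original norm as long as the relevant points stay within $\gamma_0$ of $A$; this holds because $\delta<\gamma_0$ and, using $\frac1{K_0}|\cdot|\le|\cdot|_x$, a point $v$ with $|v|_x\le\delta\ell(x)^{-1}$ has $|v|\le K_0\delta\ell(x)^{-1}\le\gamma_0$ (here $\ell(x)\ge M_0K_0^2e^\varepsilon\ge K_0$, say). So $z\mapsto(D\widetilde h_x)_z$ is Lipschitz with constant $M_0$ in the original norm on that ball. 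To convert this to the norms $|\cdot|_x$ on the source and the operator norm induced by $(|\cdot|_x,|\cdot|_{h(x)})$ on the target, I multiply by the appropriate norm-comparison constants: a factor $K_0$ from passing $|\cdot|_x\to|\cdot|$ on the domain (the increment $z-z'$), a factor $K_0$ from the argument vector in $|L v|_{h(x)}$ vs.\ $|Lv|$, and a factor $K(h(x))$ from $|\cdot|\to|\cdot|_{h(x)}$ on the target. Using $K(h(x))\le e^{\varepsilon/2}K(x)$ (since $K=K_{\varepsilon/2}$ is $\tfrac\varepsilon2$-slowly varying along $h$) and absorbing $e^{\varepsilon/2}\le e^\varepsilon$, the product is at most $M_0K_0^2e^\varepsilon K(x)=\ell(x)$, which is exactly the asserted bound.

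For (1): by the mean value inequality for the $C^2$ map $\widetilde h_x$, on the ball $\widetilde B_x(\delta\ell(x)^{-1})$ one has $\Lip\big(\widetilde h_x-(D\widetilde h_x)_0\big)\le \sup_z\|(D\widetilde h_x)_z-(D\widetilde h_x)_0\|$, and by part (2) this supremum is at most $\ell(x)\cdot\operatorname{radius}=\ell(x)\cdot\delta\ell(x)^{-1}=\delta$, where the Lipschitz constant and the operator norm are taken with respect to the pair $(|\cdot|_x,|\cdot|_{h(x)})$ consistently; the radius $\delta\ell(x)^{-1}$ was chosen precisely to make this cancellation happen. One must be mildly careful that the "$\Lip(D\widetilde h_x)\le\ell(x)$" statement and the mean value bound are both phrased with the same norms, but since part (2) already delivers the Lipschitz bound in the $(|\cdot|_x,|\cdot|_{h(x)})$-sense this is automatic.

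The main obstacle is purely bookkeeping: tracking which of the three norms ($|\cdot|$, $|\cdot|_x$, $|\cdot|_{h(x)}$) each factor lives in and making sure the constants $K_0$, $K(x)$, $e^{\varepsilon/2}$, $M_0$ multiply up to \emph{exactly} $\ell(x)=M_0K_0^2e^\varepsilon K(x)$ rather than something slightly larger — in particular using the two-sided estimate \eqref{eq:norms} on the correct side each time and invoking the $\tfrac\varepsilon2$-slowly property of $K$ to move from $K(h(x))$ back to $K(x)$. No genuinely new idea beyond the $C^2$ Taylor estimate and the norm comparisons is needed; this is the Banach-space analogue of \cite[Lemma 3.1]{Hu96} and the argument there (and in \cite[Lemma 5.1]{Young17}) can be followed essentially verbatim once the Lyapunov norms are in place.
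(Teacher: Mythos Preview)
Your proposal is correct and follows essentially the same route as the paper: prove (2) first by converting the original-norm bound $|D^2h|\le M_0$ into the Lyapunov operator norm via the three factors $K_0$, $K_0$, $K(h(x))$, then use $K(h(x))\le e^{\varepsilon}K(x)$ to land exactly on $\ell(x)$; then deduce (1) from (2) via the mean value inequality and the radius $\delta\ell(x)^{-1}$. One small remark: for $h=fg$ the slow-variation step actually needs two applications, $K(fg(x))\le e^{\varepsilon/2}K(g(x))\le e^{\varepsilon}K(x)$, which is precisely why the constant in $\ell(x)$ carries $e^{\varepsilon}$ rather than $e^{\varepsilon/2}$; your ``absorb $e^{\varepsilon/2}\le e^{\varepsilon}$'' covers this but the reason is worth stating.
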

	\begin{proof}
		For every $y,z\in \widetilde{B}_x(\delta \ell(x)^{-1})$, since
		\begin{align*}
		|(D\widetilde{h}_x)_{z}-(D\widetilde{h}_x)_{y}|&= \sup_{v\neq 0} \dfrac{|(D\widetilde{h}_x)_{z}v-(D\widetilde{h}_x)_{y}v|_{h(x)}}{|v|_{x}}\\
		&\leq
		K(h(x))K_0 |Dh_{\exp_{x} y}-Dh_{\exp_{x} z}|\\
		&\leq e^{\varepsilon}K_0M_0 K(x)|z-y|\\
		&\leq e^{\varepsilon}K^2_0M_0 K(x)|z-y|_{x}\\
        &\leq \ell(x)|z-y|_{x},
		\end{align*}
		this proves the second statement. To prove the first statement, note that
		\begin{align*}
		\Lip(\widetilde{h}_x-(D\widetilde{h}_x)_{0})
		&\leq \sup_{y\in \widetilde{B}_x(\delta \ell(x)^{-1})}|(D(\widetilde{h}_x-(D\widetilde{h}_x)_{0})_{y}|\\
		&= \sup_{y\in \widetilde{B}_x(\delta \ell(x)^{-1})}|(D\widetilde{h}_x)_{y}-(D\widetilde{h}_x)_{0}|\\
		&\leq  \Lip(D\widetilde{h}_x)\cdot|y-0|_{x} \\
        &\leq \ell(x)\cdot\delta \ell(x)^{-1}\\
        &= \delta,
		\end{align*}
	where the second statement is used in the fourth inequality. This completes the proof.
	\end{proof}

	\begin{remark}
		By the definition of $\ell(x)$, for $h=f,g$ or $fg$ we have
		$$\ell(x)\geq K(x) \ \text{and} \ \ell(h^{\pm}(x))\leq e^{\varepsilon}\ell(x)$$
		where $\varepsilon$ is fixed as in (\ref{eq:vp}). It follows from (\ref{eq:norms}) that
		\begin{equation}\label{eq:norm}
		\frac{1}{K_0}|\cdot|\leq |\cdot|_{x} \leq \ell(x) |\cdot|.
		\end{equation}
	\end{remark}
	
	Choose $\delta_1<\gamma_0$ small enough such that for every $0<\delta\leq \delta_1$, one has
	\begin{equation}\label{eq:delta}
	e^{\varepsilon}+\delta<e^{2\varepsilon}<e^{4\varepsilon}-\delta \ \text{and} \  e^{-\varepsilon}-\delta>e^{-2\varepsilon}>e^{-4\varepsilon}+\delta.
	\end{equation}
    Fix $\lambda>\log(e^{2\max\{\lambda_1(f),\lambda_1(g)\}+\varepsilon}+\delta_1)$.  For $h=f,g$ or $fg$, one can show that
       \begin{equation}\label{eq:Expanding} \begin{aligned}
    |\widetilde{h}_x u|_{h(x)} &\leq |\widetilde{h}_x u-(D\widetilde{h}_x)_{0}u|_{h(x)}+|(D\widetilde{h}_x)_{0}u|_{h(x)} \\
    &\leq (\delta+e^{\lambda_{\max}(h)+\varepsilon})  |u|_{x} \\
    &\leq e^{\lambda} |u|_{x}
    \end{aligned}
    \end{equation}
    for every $u\in \widetilde{B}_x(\delta_1\ell(x)^{-1})$.
	In particular, one has that \begin{eqnarray}\label{exp-1step}
    \widetilde{h}_x(\widetilde{B}_{x}(e^{-\lambda-\varepsilon}\delta_1 \ell(x)^{-1}))\subset \widetilde{B}_{h(x)}(\delta_1 \ell(h(x))^{-1})
    \end{eqnarray}for every $x\in \Gamma$.
	\begin{corollary}\label{C42}
		For $h=f,g$ or $fg$ and $x\in \Gamma$. Let  $\pi_{x,h}^{\tau}$  denote the projection operator of $\cB_x$ onto $E^{\tau}(x,h)$ via the splitting $\cB_x=E^{u}(x,h) \bigoplus E^{c}(x,h) \bigoplus E^{s}(x,h)$ where $\tau=u,c,s$. For every
		$u,v\in \widetilde{B}_x(\delta_1 \ell(x)^{-1})$ with $|u-v|_{x}=|\pi_{x,h}^{u}(u-v)|_{x}$, then we have that
		$$|\widetilde{h}_x u-\widetilde{h}_x v|_{h(x)}=|\pi_{h(x),h}^{u}(\widetilde{h}_x u-\widetilde{h}_x v)|_{h(x)}.$$
	\end{corollary}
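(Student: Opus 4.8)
The plan is to show that the cone condition is preserved by the connecting map $\widetilde h_x$. Write $w:=u-v$; if $w=0$ the asserted identity is trivial, so assume $|w|_x>0$. Unwinding the hypothesis: since $|\cdot|_x$ is by definition a maximum over the fine splitting $\{V_\kappa(x)\}_{\kappa\in\cI}\cup\{E_\alpha(x)\}$, the equality $|u-v|_x=|\pi^u_{x,h}(u-v)|_x$ says precisely that $\max_{\kappa:\,\lambda^\kappa(h)>0}|w_\kappa|_x=|w|_x$, where $w=\sum_{\kappa\in\cI}w_\kappa+w_\alpha$ is the decomposition along \eqref{eq:split}; in particular (as $w\ne0$) there is a block $\kappa_0\in\cI$ with $\lambda^{\kappa_0}(h)>0$ and $|w_{\kappa_0}|_x=|w|_x$. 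Because the same max-structure makes every coordinate projection (hence $\pi^u_{h(x),h}$) a norm-one projection for $|\cdot|_{h(x)}$, the inequality $|\pi^u_{h(x),h}(\widetilde h_x u-\widetilde h_x v)|_{h(x)}\le|\widetilde h_x u-\widetilde h_x v|_{h(x)}$ is automatic, so it suffices to prove the reverse, i.e. that the maximum defining $|\widetilde h_x u-\widetilde h_x v|_{h(x)}$ is attained on an unstable block. I would then linearize: with $R:=(\widetilde h_x-(D\widetilde h_x)_0)(u)-(\widetilde h_x-(D\widetilde h_x)_0)(v)$ and $(D\widetilde h_x)_0=Dh_x$ one has $\widetilde h_x u-\widetilde h_x v=Dh_xw+R$, and Lemma \ref{L43}(1) (with $\delta=\delta_1$) gives $|R|_{h(x)}\le\delta_1|w|_x$. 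Since $Dh_x$ sends each $V_\kappa(x)$ onto $V_\kappa(h(x))$ and $E_\alpha(x)$ into $E_\alpha(h(x))$, the $V_\kappa(h(x))$-component of $\widetilde h_x u-\widetilde h_x v$ is $Dh_xw_\kappa+\pi_\kappa(h(x))R$ and its $E_\alpha(h(x))$-component is $Dh_xw_\alpha+\pi_\alpha(h(x))R$, where each error term has $|\cdot|_{h(x)}$-norm at most $|R|_{h(x)}\le\delta_1|w|_x$ (coordinate projections are norm-one).

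Next I would bound the components, uniformly in $h\in\{f,g,fg\}$. On the distinguished unstable block, $w_{\kappa_0}\in V_{\kappa_0}(x)\subset E^u(x,h)$, so Corollary \ref{C41} \eqref{eq:41} gives $|Dh_xw_{\kappa_0}|_{h(x)}\ge e^{\lambda^+(h)-\varepsilon}|w_{\kappa_0}|_x=e^{\lambda^+(h)-\varepsilon}|w|_x$, hence
\[
|\pi^u_{h(x),h}(\widetilde h_x u-\widetilde h_x v)|_{h(x)}\ge|Dh_xw_{\kappa_0}+\pi_{\kappa_0}(h(x))R|_{h(x)}\ge\bigl(e^{\lambda^+(h)-\varepsilon}-\delta_1\bigr)|w|_x.
\]
On the other hand, every block $\kappa$ with $\lambda^\kappa(h)=0$ has $w_\kappa\in E^c(x,h)$, every block with $\lambda^\kappa(h)<0$ has $w_\kappa\in E^s(x,h)$, and $w_\alpha\in E_\alpha(x)\subset E^s(x,h)$; since $\lambda^-(h)+\varepsilon<0$ by \eqref{eq:vp}, the estimates \eqref{eq:42}--\eqref{eq:43} of Corollary \ref{C41} give $|Dh_xw_\kappa|_{h(x)}\le e^\varepsilon|w_\kappa|_x\le e^\varepsilon|w|_x$ and likewise $|Dh_xw_\alpha|_{h(x)}\le e^\varepsilon|w|_x$, so every center/stable/$E_\alpha$ component of $\widetilde h_x u-\widetilde h_x v$ has $|\cdot|_{h(x)}$-norm at most $(e^\varepsilon+\delta_1)|w|_x$.

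It then remains to compare the two bounds. From the choice \eqref{eq:vp} of $\varepsilon$ and $K_0\ge1$ we have $\lambda^+(h)>10K_0\varepsilon\ge10\varepsilon$, so $\lambda^+(h)-\varepsilon>4\varepsilon$ and $e^{\lambda^+(h)-\varepsilon}>e^{4\varepsilon}$; combining with \eqref{eq:delta}, namely $e^\varepsilon+\delta_1<e^{2\varepsilon}<e^{4\varepsilon}-\delta_1$, yields $e^{\lambda^+(h)-\varepsilon}-\delta_1>e^{4\varepsilon}-\delta_1>e^{2\varepsilon}>e^\varepsilon+\delta_1$. Hence the unstable projection has strictly larger $|\cdot|_{h(x)}$-norm than every center, stable, or $E_\alpha$ component of $\widetilde h_x u-\widetilde h_x v$, so the defining maximum is attained on an unstable block and $|\widetilde h_x u-\widetilde h_x v|_{h(x)}=|\pi^u_{h(x),h}(\widetilde h_x u-\widetilde h_x v)|_{h(x)}$, as claimed. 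The step I expect to need the most care is not any single estimate but the bookkeeping of which norm is in force: $|\cdot|_{h(x)}$ is a maximum over the fine splitting $\{V_\kappa(h(x))\}\cup\{E_\alpha(h(x))\}$ rather than over the coarse decomposition $E^u\oplus E^c\oplus E^s$, and it is exactly this that makes coordinate projections norm-one and lets the one-step bounds of Corollary \ref{C41} be quoted verbatim; one must also keep track that $w_\alpha$ sits inside $E^s(x,h)$ so that \eqref{eq:43} applies to it.
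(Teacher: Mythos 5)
Your proof is correct and is essentially the same argument as the paper's: both linearize $\widetilde h_x u-\widetilde h_x v=Dh_x(u-v)+R$ with $|R|_{h(x)}\le\delta_1|u-v|_x$ from Lemma \ref{L43}, both use the max-norm structure to make the projections contractive and to commute projection with $Dh_x$, both invoke the one-step bounds of Corollary \ref{C41}, and both close with the numerical comparison from \eqref{eq:vp} and \eqref{eq:delta}. The only cosmetic difference is that you carry the estimates block-by-block over the fine splitting $\{V_\kappa\}\cup\{E_\alpha\}$ (extracting a distinguished unstable block $\kappa_0$), whereas the paper aggregates directly into $\pi^u$ and $\pi^{cs}=\pi^c+\pi^s$ and uses the hypothesis $|\pi^u_{x,h}(u-v)|_x=|u-v|_x$ in the lower bound; the two bookkeeping styles are equivalent because of the max structure of $|\cdot|_x$.
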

	\begin{proof}
		By definition of the norm $|\cdot|_{x}$, it  suffices to show that
		$$|\pi_{h(x),h}^{u}(\widetilde{h}_x u-\widetilde{h}_x v)|_{h(x)}\geq  |\pi_{h(x),h}^{cs}(\widetilde{h}_x u-\widetilde{h}_x v)|_{h(x)}$$
		where $\pi_{x,h}^{cs}=\pi_{x,h}^{c}+\pi_{x,h}^{s}.$	
		Since $\lambda^{+}(h)\geq 5\varepsilon $ by the choice of $\varepsilon>0$ (see (\ref{eq:vp})), we have that
		\begin{align*}
		&|\pi_{h(x),h}^{u}(\widetilde{h}_x u-\widetilde{h}_x v)|_{h(x)}\\
		 \geq &|\pi_{h(x),h}^{u}((D\widetilde{h}_x)_{0}(u-v))|_{h(x)}-|\pi_{h(x),h}^{u}((\widetilde{h}_x-(D\widetilde{h}_x)_{0})u-(\widetilde{h}_x-(D\widetilde{h}_x)_{0})v)|_{h(x)}\\
		 \geq&|(D\widetilde{h}_x)_{0}(\pi_{x,h}^{u}(u-v))|_{h(x)}-|(\widetilde{h}_x-(D\widetilde{h}_x)_{0})u-(\widetilde{h}_x-(D\widetilde{h}_x)_{0})v|_{h(x)}\\
		\geq&e^{\lambda^{+}(h)-\varepsilon}|\pi_{x,h}^{u}(u-v)|_{x}-\delta_1|u-v|_{x} \\
		\geq&(e^{4\varepsilon}-\delta_1)|u-v|_{x}.
		\end{align*}
		where  (\ref{eq:41}) and Lemma \ref{L43} is used in the third inequality.
	 Similarly,  we have that
		\begin{align*}
		&|\pi_{h(x),h}^{cs}(\widetilde{h}_x u-\widetilde{h}_x v)|_{h(x)}\\
		 \leq&|\pi_{h(x),h}^{cs}((D\widetilde{h}_x)_{0}(u-v))|_{h(x)}+|\pi_{h(x),h}^{cs}((\widetilde{h}_x-(D\widetilde{h}_x)_{0})u-(\widetilde{h}_x-(D\widetilde{h}_x)_{0})v)|_{h(x)}\\
		 \leq&|(D\widetilde{h}_x)_{0}(\pi_{x,h}^{cs}(u-v))|_{h(x)}+|(\widetilde{h}_x-(D\widetilde{h}_x)_{0})u-(\widetilde{h}_x-(D\widetilde{h}_x)_{0})v|_{h(x)}\\
		\leq&e^{\varepsilon}|\pi_{x,h}^{cs}(u-v)|_{x}+\delta_1|u-v|_{x} \\
		\leq&(e^{\varepsilon}+\delta_1)|u-v|_{x}.
		\end{align*}
		The choice of $\delta_1$ (see (\ref{eq:delta})) implies that $e^{\varepsilon}+\delta_1< e^{2\varepsilon}< e^{4\varepsilon}-\delta_1$. This yields that
		$$|\widetilde{h}_x u-\widetilde{h}_x v|_{h(x)}=|\pi_{h(x),h}^{u}(\widetilde{h}_x u-\widetilde{h}_x v)|_{h(x)}.$$
	\end{proof}

	\begin{corollary}\label{C43}
		For $h=f,g$ or $fg$. Let $x\in \Gamma$, $n\in\Z$ and $u,v\in\cB_x$ with $\widetilde{h}^{k}_x(u),\widetilde{h}^{k}_x(v)\in \widetilde{B}_{h^{k}(x)}(\delta_1 \ell(x)^{-1})$ for $k=0,1,\cdots,n$,
		\begin{enumerate}
			\item[(1)]if $|u-v|_{x}=|\pi_{x,h}^{u}(u-v)|_x$, then
			\begin{align*} |\widetilde{h}^{k}_xu-\widetilde{h}^{k}_xv|_{h^{k}(x)}&=|\pi_{h^{k}(x),h}^{u}(\widetilde{h}^{k}_xu-\widetilde{h}^{k}_xv)|_{h^{k}(x)}\\
            &\geq (e^{4\varepsilon}-\delta_1)^{k}|u-v|_{x}\\
			&\geq e^{2k\varepsilon}|u-v|_{x}
			\end{align*}
        for $k=0,1,\cdots,n$;
			\item[(2)]if $|\widetilde{h}^{n}_xu-\widetilde{h}^{n}_xv|_{h^{n}(x)}=|\pi_{h^{n}(x),h}^{cs}(\widetilde{h}^{n}_xu-\widetilde{h}^{n}_xv)|_x$, then $$|\widetilde{h}^{k}_xu-\widetilde{h}^{k}_xv|_{h^{k}(x)}=|\pi_{h^{k}(x),h}^{cs}(\widetilde{h}^{k}_xu-\widetilde{h}^{k}_xv)|_{h^{k}(x)}\leq e^{2k\varepsilon}|u-v|_{x}$$	
        for $k=0,1,\cdots,n$.		
		\end{enumerate}
	\end{corollary}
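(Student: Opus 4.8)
The plan is to obtain both assertions from Corollary \ref{C42} by induction on $k$, treating that corollary as the one-step statement; beyond it, the only work is the bookkeeping of the multiplicative constants and, for part (2), a backward induction exploiting the strict separation of rates built into \eqref{eq:delta}. Throughout write $\Delta_k:=\widetilde h^{k}_x u-\widetilde h^{k}_x v\in\cB_{h^{k}(x)}$ for $0\le k\le n$. I will use repeatedly that, by the definition of $|\cdot|_y$ and of the subspaces $E^{\tau}(y,h)$, one has $|w|_y=\max\{|\pi^{u}_{y,h}w|_y,|\pi^{cs}_{y,h}w|_y\}$ for every $y\in\Gamma$ and $w\in\cB_y$; consequently $|w|_y=|\pi^{cs}_{y,h}w|_y$ holds if and only if $|\pi^{u}_{y,h}w|_y\le|\pi^{cs}_{y,h}w|_y$, and fails if and only if $|\pi^{u}_{y,h}w|_y>|\pi^{cs}_{y,h}w|_y$.

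For part (1) I would argue by forward induction on $k=0,1,\dots,n$, the case $k=0$ being the hypothesis. Assume $|\Delta_k|_{h^{k}(x)}=|\pi^{u}_{h^{k}(x),h}\Delta_k|_{h^{k}(x)}$. By the standing assumption the iterates $\widetilde h^{k}_x u$ and $\widetilde h^{k}_x v$ lie in the ball where Corollary \ref{C42} is valid at the point $h^{k}(x)$; applying that corollary there to the pair $\widetilde h^{k}_x u,\widetilde h^{k}_x v$ and using $\widetilde h_{h^{k}(x)}\circ\widetilde h^{k}_x=\widetilde h^{k+1}_x$ yields $|\Delta_{k+1}|_{h^{k+1}(x)}=|\pi^{u}_{h^{k+1}(x),h}\Delta_{k+1}|_{h^{k+1}(x)}$. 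For the quantitative bound I would quote the first chain of inequalities inside the proof of Corollary \ref{C42}, which in fact establishes $|\pi^{u}_{h(x),h}(\widetilde h_x u-\widetilde h_x v)|_{h(x)}\ge(e^{4\varepsilon}-\delta_1)|u-v|_x$ whenever $|u-v|_x=|\pi^{u}_{x,h}(u-v)|_x$; applied at $h^{k}(x)$ it gives $|\Delta_{k+1}|_{h^{k+1}(x)}\ge(e^{4\varepsilon}-\delta_1)|\Delta_k|_{h^{k}(x)}$, and iterating from $k=0$ produces the factor $(e^{4\varepsilon}-\delta_1)^{k}$, which is at least $e^{2k\varepsilon}$ by \eqref{eq:delta}.

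For part (2) I would argue by backward induction on $k$; the case $k=n$ is the hypothesis. For the step, suppose $|\Delta_{k+1}|_{h^{k+1}(x)}=|\pi^{cs}_{h^{k+1}(x),h}\Delta_{k+1}|_{h^{k+1}(x)}$ for some $0\le k\le n-1$ but, for contradiction, that the identity fails at step $k$, i.e. $|\Delta_k|_{h^{k}(x)}=|\pi^{u}_{h^{k}(x),h}\Delta_k|_{h^{k}(x)}>|\pi^{cs}_{h^{k}(x),h}\Delta_k|_{h^{k}(x)}$. Then Corollary \ref{C42} applies at $h^{k}(x)$, and its first and second chains of inequalities (the latter combined with the trivial bound $|\pi^{cs}_{h^{k}(x),h}\Delta_k|_{h^{k}(x)}\le|\Delta_k|_{h^{k}(x)}$, the former with $|\pi^{u}_{h^{k}(x),h}\Delta_k|_{h^{k}(x)}=|\Delta_k|_{h^{k}(x)}$) give
$$|\pi^{u}_{h^{k+1}(x),h}\Delta_{k+1}|_{h^{k+1}(x)}\ge(e^{4\varepsilon}-\delta_1)\,|\Delta_k|_{h^{k}(x)},$$
$$|\pi^{cs}_{h^{k+1}(x),h}\Delta_{k+1}|_{h^{k+1}(x)}\le(e^{\varepsilon}+\delta_1)\,|\Delta_k|_{h^{k}(x)}.$$
Since $e^{\varepsilon}+\delta_1<e^{4\varepsilon}-\delta_1$ by \eqref{eq:delta}, this forces $|\pi^{u}_{h^{k+1}(x),h}\Delta_{k+1}|_{h^{k+1}(x)}>|\pi^{cs}_{h^{k+1}(x),h}\Delta_{k+1}|_{h^{k+1}(x)}$, so the identity fails at step $k+1$ as well, contradicting the inductive hypothesis. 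Hence the $cs$-identity holds at every $k\le n$. Finally, with it in force at all steps, the second chain in the proof of Corollary \ref{C42} reads $|\Delta_{k+1}|_{h^{k+1}(x)}\le(e^{\varepsilon}+\delta_1)|\Delta_k|_{h^{k}(x)}\le e^{2\varepsilon}|\Delta_k|_{h^{k}(x)}$, and iterating from $k=0$ gives $|\Delta_k|_{h^{k}(x)}\le e^{2k\varepsilon}|u-v|_x$.

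I do not expect a genuine obstacle: once Corollary \ref{C42} is available, the whole statement is essentially a transcription of its proof through two inductions. The only point that needs some care is the backward direction in part (2), since Corollary \ref{C42} cannot simply be run in reverse; the $cs$-domination at time $n$ must be pushed backwards by the contradiction argument above, and this is precisely where the \emph{strict} gap $e^{\varepsilon}+\delta_1<e^{2\varepsilon}<e^{4\varepsilon}-\delta_1$ of \eqref{eq:delta}---rather than the merely non-strict domination actually invoked inside Corollary \ref{C42}---is what makes the argument close.
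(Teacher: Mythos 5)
Your proposal is correct and takes essentially the same route that the paper's one-line proof ("it follows from Corollary \ref{C41}, Lemma \ref{L43}, Corollary \ref{C42} and the choice of $\varepsilon,\delta_1$") points to: forward induction via Corollary \ref{C42} for part (1), and a backward induction for part (2) that propagates the $cs$-domination from time $n$ to time $0$ by contradiction, using the strict gap $e^{\varepsilon}+\delta_1<e^{2\varepsilon}<e^{4\varepsilon}-\delta_1$ of \eqref{eq:delta}. The only small remark is that the ball hypothesis as printed in the corollary reads $\widetilde B_{h^{k}(x)}(\delta_1\ell(x)^{-1})$, whereas applying Corollary \ref{C42} at the iterate $h^{k}(x)$ requires $\widetilde h^{k}_x(u),\widetilde h^{k}_x(v)\in\widetilde B_{h^{k}(x)}(\delta_1\ell(h^{k}(x))^{-1})$; your argument implicitly uses the latter, which appears to be the intended reading (and is what is available when the corollary is invoked, e.g.\ in Lemma \ref{L512}).
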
	
	\begin{proof}
		It follows from Corollary \ref{C41}, Lemma \ref{L43}, Corollary \ref{C42} and the choice of $\varepsilon, \ \delta_1$ (see (\ref{eq:vp}) and (\ref{eq:delta})).
	\end{proof}
	
	\begin{remark}\label{R1}
		Corollary \ref{C42} remains true if we replace $\pi_{\cdot,h}^{u}, \pi_{\cdot,h}^{cs}$  by $\pi_{\cdot,h}^{uc}, \pi_{\cdot,h}^{s}$ respectively, where $\pi_{\cdot,h}^{uc}=\pi_{\cdot,h}^{u}+\pi_{\cdot,h}^{c}$. In this case, the inequalities (1) and (2) in Corollary \ref{C43} become
		\begin{align*}
		 |\widetilde{h}^{k}_xu-\widetilde{h}^{k}_xv|_{h^{k}(x)}=|\pi_{h^{k}(x),h}^{uc}(\widetilde{h}^{k}_xu-\widetilde{h}^{k}_xv)|_{h^{k}(x)}\geq e^{-2k\varepsilon}|u-v|_{x};
		\end{align*}
		and
		\begin{align*}
		 |\widetilde{h}^{k}_xu-\widetilde{h}^{k}_xv|_{h^{k}(x)}&=|\pi_{h^{k}(x),h}^{s}(\widetilde{h}^{k}_xu-\widetilde{h}^{k}_xv)|_{h^{k}(x)}\leq (e^{-4\varepsilon}+\delta_1)^k |u-v|_{x}\\
		&\leq e^{-2k\varepsilon} |u-v|_{x}
		\end{align*}
		respectively.
	\end{remark}
	
	\section{\textbf{Sub-additivity of measure-theoretic entropies}}\label{S5}
	In this section, we will give the detailed proof of  Theorem \ref{A}. Let $f,g,A$ and $\mu$ be  as in Theorem \ref{A}.
	
	%At first, we review some classical results of local entropies.
	\subsection{Local entropy}\label{Sub51}
	Suppose $T$ and $S$ are commuting continuous maps on a compact metric space $X$. Let $ B(x,\delta) $ be a closed ball in $X$ centered at $x$ and of radius $\delta$. We call the set
	$$B_n(x,\delta,T)=\bigcap_{k=0}^{n}T^{-k}B(T^kx,\delta)$$
	to be an $(n,\delta,T) $-ball. Let $\mu$ be a $T$-invariant measure on $X$, by Brin-Katok's entropy formula \cite{Katok83}, the following limit
	\begin{align*}
	h_{\mu}(x,T):&=\lim_{\delta\rightarrow 0}\limsup_{n\rightarrow \infty} -\frac{1}{n} \log \mu(B_n(x,\delta,T))\\
	&=\lim_{\delta\rightarrow 0}\liminf_{n\rightarrow \infty} -\frac{1}{n} \log \mu(B_n(x,\delta,T))
	\end{align*}
	is well-defined for $\mu$-almost every $x\in X$, and the quantity $h_{\mu}(x,T)$ is called the local entropy of $T$ at point $x$. Moreover,  the function $h_{\mu}(x,T)$ is $T$-invariant, integrable and satisfies that $\int h_{\mu}(x,T) d\mu(x)=h_{\mu}(T)$.

	\begin{lemma}[\cite{Hu96} Lemma 6.1]\label{L51}
		Let $T$ and $S$ be two commuting continuous homeomorphisms on a compact metric space $X$ and $\mu\in \cM(T,S,X)$. Then, the function $h_{\mu}(x,T)$ is both $T$-invariant and $S$-invariant. Consequently, if $\mu$ is $(T,S)$-ergodic, then $h_{\mu}(x,T)=h_{\mu}(T)$ for $\mu$-almost every $x\in X$.
	\end{lemma}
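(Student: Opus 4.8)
The plan is to deduce the statement from two ingredients already available: the Brin--Katok entropy formula recalled just above, which guarantees that $h_\mu(\cdot,T)$ is $T$-invariant, integrable, and has integral $h_\mu(T)$; and the uniform continuity of $S$ and of $S^{-1}$. Thus the only substantive point is the $S$-invariance of the function $x\mapsto h_\mu(x,T)$, after which the ergodicity conclusion is immediate from Proposition~\ref{P2}.

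First I would record the basic geometric fact. Since $X$ is compact and $S$ is a homeomorphism, both $S$ and $S^{-1}$ are uniformly continuous, so for each $\delta>0$ one can choose $\delta'=\delta'(\delta)\in(0,\delta]$, with $\delta'(\delta)\to0$ as $\delta\to0$, such that $S(B(y,\delta'))\subset B(Sy,\delta)$ for every $y\in X$. Using $ST=TS$ together with the definition of the dynamical balls,
\[
S(B_n(x,\delta',T))=\bigcap_{k=0}^{n}T^{-k}\,S(B(T^kx,\delta'))\subset\bigcap_{k=0}^{n}T^{-k}B(T^k(Sx),\delta)=B_n(Sx,\delta,T).
\]
Since $S$ is a bijection and $\mu$ is $S$-invariant, $\mu(S(A))=\mu(A)$ for every Borel set $A$; hence $\mu(B_n(x,\delta',T))\le\mu(B_n(Sx,\delta,T))$ for all $n$.

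Next I would push this through the Brin--Katok formula. Writing $\phi(y,r):=\limsup_{n\to\infty}-\tfrac1n\log\mu(B_n(y,r,T))$, the previous inequality gives $\phi(x,\delta')\ge\phi(Sx,\delta)$. As $r\mapsto\phi(y,r)$ is monotone, $\phi(y,r)\to h_\mu(y,T)$ as $r\to0$ along any sequence, so letting $\delta\to0$ (hence $\delta'(\delta)\to0$) yields $h_\mu(x,T)\ge h_\mu(Sx,T)$ for $\mu$-a.e.\ $x$. Running the identical argument with $S^{-1}$ in place of $S$ (another homeomorphism commuting with $T$, with $\mu$ also $S^{-1}$-invariant) gives $h_\mu(x,T)\ge h_\mu(S^{-1}x,T)$ $\mu$-a.e., i.e.\ $h_\mu(Sy,T)\ge h_\mu(y,T)$ $\mu$-a.e.; combining the two inequalities gives $h_\mu(Sx,T)=h_\mu(x,T)$ $\mu$-a.e., which is the asserted $S$-invariance. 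Finally, if $\mu$ is $(T,S)$-ergodic then $\psi:=h_\mu(\cdot,T)$ satisfies $\psi\circ T=\psi$ and $\psi\circ S=\psi$ $\mu$-a.e., so in particular $\psi\circ T\le\psi$ and $\psi\circ S\le\psi$; by Proposition~\ref{P2}(2)(iv), $\psi$ is $\mu$-a.e.\ constant, and integrating against $\mu$ identifies the constant as $\int h_\mu(x,T)\,d\mu(x)=h_\mu(T)$.

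I expect the only delicate bookkeeping to occur in the passage to the limit: one must be sure the uniform-continuity modulus $\delta'(\delta)$ can be chosen to tend to $0$ together with $\delta$, and that the monotonicity of $\phi(y,\cdot)$ in the radius lets the comparison $\phi(x,\delta'(\delta))\ge\phi(Sx,\delta)$ survive the simultaneous limit $\delta\to0$, $\delta'(\delta)\to 0$ in both arguments. Everything else is a direct set-theoretic computation exploiting $ST=TS$ and the $S$-invariance (and bijectivity) of $\mu$.
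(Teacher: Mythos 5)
Your argument is correct and is the natural one; the paper itself does not prove this lemma but merely cites it from Hu's 1996 paper, so there is no in-paper proof to compare against, but the route you take — uniform continuity of $S$ and $S^{-1}$ to push Bowen balls across $S$, $S$-bijectivity plus $\mu$-preservation to compare their measures, monotonicity of $\phi(y,\cdot)$ to pass to the $\delta\to0$ limit in both arguments, and Proposition~\ref{P2}(2)(iv) for the ergodic conclusion — is the standard and expected one. The only subtle point, which you correctly handle, is that the modulus $\delta'(\delta)\le\delta$ automatically tends to $0$, so the comparison $\phi(x,\delta')\ge\phi(Sx,\delta)$ survives the simultaneous limit.
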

	
	Let $\rho:X \rightarrow [0,\infty)$ be a measurable function. Define an $(n,\rho,T)$-ball at $x$ by
	$$B_n(x,\rho,T)=\bigcap_{k=0}^{n}T^{-k}B(T^kx,\rho(T^kx)).$$
	The following result is an extension of the Brin-Katok's entropy formula.
	\begin{proposition}[\cite{Hu96} Proposition 6.4]\label{P5}
		Let $\{\rho_{\delta}:\delta>0\}$ be a family of measurable functions on $X$ satisfying that
		\begin{enumerate}
			\item[(1)] $0<\rho_{\delta}(x)\leq \delta$ for every $x$ and $\rho_{\delta}$ monotonically decreases as $\delta\rightarrow 0;$
			\item[(2)]$\int \log \rho_{\delta} d\mu<\infty$ for every $\delta>0$.
		\end{enumerate}
		Then
		$$h_{\mu}(x,T)=\lim_{\delta\rightarrow 0}\limsup_{n\rightarrow \infty} -\frac{1}{n} \log \mu(B_n(x,\rho_{\delta},T))\quad \mu-\text{a.e.}\, x.$$
	\end{proposition}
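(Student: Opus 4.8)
The plan is to prove $\phi(x)=h_{\mu}(x,T)$ for $\mu$-a.e.\ $x$, where $\phi_{\delta}(x):=\limsup_{n}-\frac1n\log\mu(B_{n}(x,\rho_{\delta},T))$ and $\phi(x):=\lim_{\delta\to0}\phi_{\delta}(x)$ (this limit exists since $\{\phi_{\delta}\}$ is non-increasing in $\delta$ by hypothesis~(1)). The inequality $\phi\ge h_{\mu}(\cdot,T)$ is immediate: from $\rho_{\delta}\le\delta$ one gets $B_{n}(x,\rho_{\delta},T)\subseteq B_{n}(x,\delta,T)$, hence $-\frac1n\log\mu(B_{n}(x,\rho_{\delta},T))\ge-\frac1n\log\mu(B_{n}(x,\delta,T))$ for all $n$; taking $\limsup_{n}$ and then $\delta\to0$, the right side converges to $h_{\mu}(x,T)$ by the Brin--Katok formula \cite{Katok83}. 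Everything below is aimed at the reverse inequality.

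First I would reduce to radii that vary slowly along orbits. Hypothesis~(2) (which for $\delta\le1$ just says $\log\rho_{\delta}\in L^{1}(\mu)$), fed into Lemma~\ref{Tem} with $\psi=\log\rho_{\delta}$ for $T$ and for $T^{-1}$, gives $\frac1n\log\rho_{\delta}(T^{n}x)\to0$ $\mu$-a.e., so $1/\rho_{\delta}$ is $\mu$-temperate. Fixing $\eta>0$, Lemma~\ref{slowly} yields a measurable $\psi_{\delta,\eta}\ge1/\rho_{\delta}$ with $\psi_{\delta,\eta}(T^{\pm1}x)\le e^{\eta}\psi_{\delta,\eta}(x)$; put $\widehat\rho_{\delta,\eta}:=1/\psi_{\delta,\eta}$, so $0<\widehat\rho_{\delta,\eta}\le\rho_{\delta}\le\delta$ and $e^{-\eta}\widehat\rho_{\delta,\eta}(x)\le\widehat\rho_{\delta,\eta}(T^{\pm1}x)\le e^{\eta}\widehat\rho_{\delta,\eta}(x)$ $\mu$-a.e. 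Since $B_{n}(x,\widehat\rho_{\delta,\eta},T)\subseteq B_{n}(x,\rho_{\delta},T)$, it suffices to prove, for each fixed $\delta\le1$ and $\eta>0$, that $\limsup_{n}-\frac1n\log\mu(B_{n}(x,\widehat\rho_{\delta,\eta},T))\le h_{\mu}(x,T)+C\eta$ $\mu$-a.e., for an absolute constant $C$: this bounds $\phi_{\delta}(x)$, and letting $\eta\to0$ (then $\delta\to0$) gives $\phi\le h_{\mu}(\cdot,T)$.

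The core step is then a Brin--Katok-type \emph{lower} bound on $\mu(B_{n}(x,r,T))$, $r:=\widehat\rho_{\delta,\eta}$, exploiting $r(T^{k}x)\ge e^{-|k|\eta}r(x)$. Given $\gamma>0$, I would fix a threshold $c>0$ with $\mu(\{r\ge c\})>1-\gamma$ and, as in the proof of Brin--Katok, finite partitions $\xi$ of arbitrarily small diameter with $\mu(\partial\xi)=0$ along which the Shannon--McMillan--Breiman averages $-\frac1m\log\mu(\xi_{0}^{m-1}(\cdot))$ converge $\mu$-a.e.\ to values approximating $h_{\mu}(\cdot,T)$ from above within $\gamma$ (here $\xi_{0}^{m-1}(z)$ is the atom of $\bigvee_{i=0}^{m-1}T^{-i}\xi$ through $z$, and I use that $\mu$ is $(f,g)$-ergodic, so $h_{\mu}(\cdot,T)=h_{\mu}(T)$ $\mu$-a.e.\ by Lemma~\ref{L51}). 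One then cuts the segment $x,Tx,\dots,T^{n}x$ into consecutive blocks; on each block the slow variation of $r$ keeps the radius within a factor $e^{\pm\eta(\text{block length})}$ of its value at the left endpoint, so whenever that endpoint lies in $\{r\ge c\}$ the block's contribution to $B_{n}(x,r,T)$ contains a full SMB atom at a scale $<c$, of measure $\ge e^{-(\text{block length})(h_{\mu}(T)+O(\gamma))}$. Using Birkhoff (and Poincar\'e recurrence) to ensure that, $\mu$-a.e., all but an $O(\gamma)$-fraction of the blocks begin in $\{r\ge c\}$, and concatenating the per-block estimates while absorbing the few bad blocks into a subexponential factor, one obtains $\mu(B_{n}(x,r,T))\ge e^{-n(h_{\mu}(x,T)+O(\gamma)+O(\eta))}$ for all large $n$; letting $\gamma\to0$ gives the bound needed in the previous step.

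I expect the main obstacle to be exactly this block estimate: one must simultaneously choose each block length large enough for the SMB convergence at that block's starting point, small enough that the partition scale stays below $c\,e^{-\eta(\text{block length})}$, and control the blocks on which $r$ is small — and no single fixed block length can do all three in general. As in the proof of the Brin--Katok theorem, the resolution is to let the block lengths be \emph{stopping times} adapted to the orbit and the scales $r(T^{k}x)$, rather than a single constant; the slow-variation property secured in the reduction step is precisely what makes such an adaptive decomposition go through.
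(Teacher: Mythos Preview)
The paper does not prove this proposition; it is cited verbatim from \cite{Hu96}, Proposition~6.4. So there is no ``paper's own proof'' to compare against, only Hu's original argument.

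Your inequality $\phi\ge h_{\mu}(\cdot,T)$ is correct and immediate. The reduction to an $\eta$-slowly varying radius via Lemmas~\ref{Tem} and~\ref{slowly} is also sound (note that $\log\rho_{\delta}\in L^{1}(\mu)$ and $T$-invariance of $\mu$ give $\log\rho_{\delta}\circ T-\log\rho_{\delta}\in L^{1}(\mu)$, so the hypothesis of Lemma~\ref{Tem} is met). However, the ``core step'' is only a sketch: you describe a block decomposition with adaptive stopping times but do not carry it out, and you explicitly invoke $(f,g)$-ergodicity via Lemma~\ref{L51} to make $h_{\mu}(x,T)$ constant, which is \emph{not} part of the hypotheses of the proposition as stated. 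So as written there is a genuine gap.

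More to the point, the route you outline is considerably harder than necessary. Hu's argument (and the structure of the present paper---see Lemma~\ref{par1}) goes through a \emph{partition} rather than a block decomposition: the integrability $\int|\log\rho_{\delta}|\,d\mu<\infty$ together with finite box-counting dimension yields a countable partition $\cP$ with $H_{\mu}(\cP)<\infty$ and $\cP(x)\subset B(x,\rho_{\delta}(x))$ for $\mu$-a.e.\ $x$. Then $\bigvee_{k=0}^{n}T^{-k}\cP(x)\subset B_{n}(x,\rho_{\delta},T)$, and the Shannon--McMillan--Breiman theorem applied directly to $\cP$ gives $\limsup_{n}-\frac{1}{n}\log\mu(B_{n}(x,\rho_{\delta},T))\le h_{\mu}(x,T,\cP)\le h_{\mu}(x,T)$ pointwise, with no ergodicity assumption and no stopping times. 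Your slow-variation reduction, while correct, is not needed once one has such a partition.
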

	
	For $r\in(0,1)$ and $\delta>0$, denote by $N_n(\delta,r,T)$ the minimal number of $(n,\delta,T) $-balls covering a set of $\mu$-measure more than $1-r$. Assume that $\mu$ is $T$-ergodic, by the Katok's entropy formula  (see \cite[Theorem 1.1]{Katok80}), we have that
	$$h_{\mu}(T)=\lim_{\delta\rightarrow 0}\limsup_{n\rightarrow \infty} \frac{1}{n} \log N_n(\delta,r,T).$$
	In fact, the assumption $\mu$ is $T$-ergodic is not necessary. By Lemma \ref{L51}, the following lemma can be proven easily.
	\begin{lemma}[\cite{Hu96} Proposition 6.3]\label{L52}
		If $T$ and $S$ are commuting continuous maps on a compact metric space $X$, $\mu$ is $(T,S)$-ergodic. Then, for every $r\in(0,1)$ we have that
		$$h_{\mu}(T)=\lim_{\delta\rightarrow 0}\limsup_{n\rightarrow \infty} \frac{1}{n} \log N_n(\delta,r,T).$$
	\end{lemma}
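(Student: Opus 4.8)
The plan is to deduce the identity $h_\mu(T)=\lim_{\delta\to 0}\limsup_{n\to\infty}\frac 1n\log N_n(\delta,r,T)$ from the classical Katok entropy formula (which requires ergodicity of $T$), by combining the ergodic decomposition of $\mu$ as an element of $\cM(T,S,X)$ with the fact, recorded in Lemma \ref{L51}, that the local entropy function $x\mapsto h_\mu(x,T)$ is constant $\mu$-a.e.\ when $\mu$ is $(T,S)$-ergodic. The key point is that although $\mu$ need not be $T$-ergodic, its behaviour with respect to $T$ alone is governed by the ergodic components of $\mu$ under $T$, and the Brin--Katok theorem applied to $T$ tells us that $-\frac1n\log\mu(B_n(x,\delta,T))$ still detects $h_\mu(x,T)$ after taking $\delta\to 0$.

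First I would establish the easy inequality $\limsup_{n\to\infty}\frac1n\log N_n(\delta,r,T)\le h_\mu(T)+o(1)$ as $\delta\to 0$: by the Brin--Katok formula for $T$, for $\mu$-a.e.\ $x$ and all small $\delta$ one has $\mu(B_n(x,\delta,T))\ge e^{-n(h_\mu(x,T)+\varepsilon)}$ for infinitely many $n$; since $h_\mu(x,T)=h_\mu(T)$ a.e.\ by Lemma \ref{L51}, a standard covering/counting argument (a set of measure $>1-r$ cannot be covered by too few balls each of measure at most roughly $e^{-n(h_\mu(T)-\varepsilon)}$) yields the bound. For the reverse inequality I would use that $N_n(\delta,r,T)$ counts $(n,\delta,T)$-balls, which only involves the dynamics of $T$; hence I can pass to the ergodic decomposition $\mu=\int \nu\,d\tau(\nu)$ of $\mu$ into $T$-ergodic components. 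For each such $\nu$ Katok's formula gives $h_\nu(T)=\lim_{\delta\to 0}\limsup_n\frac1n\log N_n^\nu(\delta,r,T)$, and the affinity of entropy together with $\int h_\nu(T)\,d\tau(\nu)=h_\mu(T)$, plus subadditivity of the covering numbers under the decomposition, produces $\liminf$ of the covering quantity $\ge h_\mu(T)$. Combining the two inequalities and noting that the $\limsup$ over $n$ is monotone in $\delta$ so the limit as $\delta\to0$ exists, gives the claim.

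Alternatively — and this is probably cleaner, matching the style of the cited \cite{Hu96} — I would avoid the ergodic decomposition and argue directly: run verbatim the proof of Katok's entropy formula, whose only use of $T$-ergodicity is to guarantee (via Birkhoff and Shannon--McMillan--Breiman) that $-\frac1n\log\mu(B_n(x,\delta,T))$ has a $\mu$-a.e.\ constant limiting behaviour as $\delta\to 0$ and that Birkhoff averages of indicator functions of a positive-measure set are a.e.\ bounded below. Both facts survive under the weaker hypothesis: the first is exactly Lemma \ref{L51} (the local entropy $h_\mu(x,T)$ is $\mu$-a.e.\ equal to the constant $h_\mu(T)$), and the second follows from Proposition \ref{P2}(1)(iv) applied to $T,S$, since $\psi(x)=\limsup_n\frac1n\sum_{k<n}\mathbf 1_E(T^kx)$ is $T$-invariant hence also, one checks, $S$-subinvariant up to null sets, forcing it to be a.e.\ constant. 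With these two inputs, Katok's original covering argument goes through unchanged.

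The main obstacle is making the second-route claim about Birkhoff averages precise: one must check that the auxiliary functions arising in Katok's proof (typically $\liminf$ or $\limsup$ of ergodic averages of indicators, or of $-\frac1n\log\mu(B_n(\cdot,\delta,T))$) are genuinely $\mu$-a.e.\ constant under the $(T,S)$-ergodicity hypothesis, i.e.\ that they fall under the scope of Proposition \ref{P2}(1)(iv) or of Lemma \ref{L51}. This is essentially bookkeeping but it is where the argument could go wrong if some intermediate quantity is $T$-invariant but not obviously $S$-invariant; the saving grace is that everything in sight is built only out of the $T$-dynamics and of $\mu$, and $\mu$ is $S$-invariant, so $S$-invariance (or sub-invariance) of these quantities is automatic. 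Since \cite{Hu96} already carries this out in finite dimensions and the statement here is purely about commuting continuous maps on a compact metric space with no reference to the Banach space structure, I would simply cite that proof after pointing out the two inputs above.
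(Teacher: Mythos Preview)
Your second approach is correct and is exactly what the paper does: it simply says ``By Lemma~\ref{L51}, the following lemma can be proven easily'' and cites \cite{Hu96}; the entire content is that once $h_\mu(x,T)=h_\mu(T)$ $\mu$-a.e., Katok's original covering argument goes through verbatim, since ergodicity of $T$ is used there only to make the local entropy constant.

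A few minor points. Your first (ergodic-decomposition) route is shakier than you make it sound: the claim about ``subadditivity of the covering numbers under the decomposition'' is not a standard fact, and extracting a lower bound on $N_n(\delta,r,T)$ for $\mu$ from the corresponding quantities for its $T$-ergodic components $\nu$ is delicate because a single set of $\mu$-measure $>1-r$ need not have $\nu$-measure $>1-r$ for most $\nu$. You were right to prefer the direct route. Also, in your first paragraph the parenthetical ``a set of measure $>1-r$ cannot be covered by too few balls each of measure at most roughly $e^{-n(h_\mu(T)-\varepsilon)}$'' describes the \emph{lower} bound on $N_n$, not the upper bound you are claiming there; the upper bound comes instead from the fact that Bowen balls have measure at least $e^{-n(h_\mu(T)+\varepsilon)}$, so a maximal disjoint family has bounded cardinality. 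Finally, the reference should be Proposition~\ref{P2}(2)(iv), not (1)(iv), and in fact you do not need that item at all: Lemma~\ref{L51} alone (constancy of $h_\mu(x,T)$) suffices for Katok's proof.
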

	\begin{remark}
		Since $\mu\in \cE(T,S,X)$ is equivalent to $\mu\in \cE(TS,T,X)$ (see Proposition \ref{P2} (2)),
	for every $r\in(0,1)$ we have that 	$$h_{\mu}(TS)=\lim_{\delta\rightarrow 0}\limsup_{n\rightarrow \infty} \frac{1}{n} \log N_n(\delta,r,TS).$$
	\end{remark}
	
	\subsection{Relations between Bowen balls}\label{Sub52}
	Let  $\widetilde{h}_{x}, \widetilde{B}_{x}(r),\delta_1,\varepsilon$ and $\ell(x)$ be the same as in  Section \ref{SEC4}.
	
    In the following, we will prove some useful lemmas, they has the counterpart in  finite dimensional systems which are proved in \cite[Sect. 7]{Hu96}. For the case of finite dimensional systems, Hu \cite{Hu96} considered two commuting $C^{2}$ diffeomorphisms  $f,g$  on a compact Riemannian manifold,  so that a version of Lemma \ref{L43} and all the corollaries are also valid for $\widetilde{f}^{-1}, \widetilde{g}^{-1}$. In our case, although $f^{-1}$ is well-defined and continuous on the set $A$, it may not be defined outside $A$ and may not be differentiable even in $A$. So we need some new methods to prove the  following lemmas, which is slightly different from those in  \cite{Hu96}.

	\begin{lemma}\label{L511}
		Let  $\delta>0$ be a small number and $x\in \Gamma$. For $h=f,g$ or $fg$, let $u\in \cB_x$ with $\widetilde{h}^{k}_x(u)\in \widetilde{B}_{h^{k}_x(u)}(\delta_1 \ell(x)^{-1})$ for every $k=0,1,\cdots,n$,
		\begin{enumerate}
			\item[(1)] if $u\in \widetilde{B}_x(\delta e^{-2n\varepsilon})$ and $\widetilde{h}^{n}_x(u)\in  \widetilde{B}_{h^n(x)}(\delta)$, then
			$$\widetilde{h}^{k}_x(u)\in  \widetilde{B}_{h^k(x)}(\delta e^{-2(n-k)\varepsilon}) \qquad \forall \ k=0,1,\cdots,n;$$
			\item[(2)] if $u\in \widetilde{B}_x(\delta)$ and $\widetilde{h}^{n}_x(u)\in  \widetilde{B}_{h^n(x)}(\delta e^{-2n\varepsilon})$, then
			$$\widetilde{h}^{k}_x(u)\in  \widetilde{B}_{h^k(x)}(\delta e^{-2k\varepsilon}) \qquad \forall \ k=0,1,\cdots,n.$$
		\end{enumerate}	
	\end{lemma}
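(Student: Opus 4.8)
The plan is to prove each of the two statements by a backward (downward) induction on $k$, running from $k=n$ down to $k=0$, using the contraction/expansion estimates on the center-stable and unstable directions already packaged in Corollary \ref{C43} and Remark \ref{R1}. The key point is that the hypothesis $\widetilde{h}^{k}_x(u)\in\widetilde{B}_{h^k(x)}(\delta_1\ell(x)^{-1})$ for all $0\le k\le n$ guarantees that the one-step estimates of Lemma \ref{L43} (and hence all corollaries in Section \ref{SEC4}) are available at every intermediate step, so we never leave the region where the connecting maps $\widetilde h$ are well controlled.

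For part (1): I would apply Corollary \ref{C43}(2) (or rather its companion in Remark \ref{R1}, since here it is the full vector $u$, not a difference, whose forward images we track — concretely, apply the estimates with $v=0$, noting $\widetilde h^k_x(0)$ stays at the origin and $\pi^{u}+\pi^{cs}=\mathrm{Id}$) comparing the point $\widetilde h^n_x(u)$ with the origin. The crux is the backward estimate: if $\widetilde h^{k+1}_x(u)\in\widetilde B_{h^{k+1}(x)}(\delta e^{-2(n-k-1)\varepsilon})$, I want to conclude $\widetilde h^{k}_x(u)\in\widetilde B_{h^{k}(x)}(\delta e^{-2(n-k)\varepsilon})$. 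Decompose $\widetilde h^k_x(u)$ into its $E^u$ and $E^{cs}$ parts relative to $h$ at the point $h^k(x)$. On the $E^{cs}$ part, forward iteration contracts by at most $e^{2\varepsilon}$ per step (Corollary \ref{C43}(2)/Corollary \ref{C41} \eqref{eq:43},\eqref{eq:42}), so $|\pi^{cs}\widetilde h^k_x(u)|_{h^k(x)}\le e^{-2\varepsilon}\cdot e^{2\varepsilon}|\pi^{cs}\widetilde h^{k}_x(u)|$ — that is the wrong direction, so instead I must argue contrapositively using the forward expansion on $E^u$: by Corollary \ref{C43}(1), $|\widetilde h^{n}_x(u)|\ge e^{2(n-k)\varepsilon}|\pi^u\widetilde h^k_x(u)|$ if the unstable part dominates at step $k$; combined with $\widetilde h^n_x(u)\in\widetilde B(\delta)$ this bounds the unstable part, and the hypothesis $u\in\widetilde B(\delta e^{-2n\varepsilon})$ together with the already-established bound at step $k+1$ controls the center-stable part. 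So the inductive step splits according to which subspace realizes the norm of $\widetilde h^k_x(u)$, and in either case the two constraints (the one at time $0$ propagated forward, the one at time $n$ propagated backward) pinch the norm down to $\delta e^{-2(n-k)\varepsilon}$.

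Part (2) is the mirror image: here the roles of the $E^u$ and $E^{cs}$ estimates swap, using Remark \ref{R1} — the $E^{uc}$ part expands by at least $e^{-2\varepsilon}$ backward (equivalently contracts forward by at most $e^{2\varepsilon}$) and the $E^{s}$ part contracts forward. One runs a forward induction on $k$ from $0$ to $n$: given $\widetilde h^{k-1}_x(u)\in\widetilde B(\delta e^{-2(k-1)\varepsilon})$, the $E^{s}$-part of $\widetilde h^k_x(u)$ is bounded by $e^{-2\varepsilon}\delta e^{-2(k-1)\varepsilon}=\delta e^{-2k\varepsilon}$ directly, while the $E^{uc}$-part is controlled by pulling the time-$n$ constraint $\widetilde h^n_x(u)\in\widetilde B(\delta e^{-2n\varepsilon})$ back to time $k$ via the expansion estimate of Remark \ref{R1}, which gives $|\pi^{uc}\widetilde h^k_x(u)|\le e^{-2(n-k)\varepsilon}|\widetilde h^n_x(u)|\le \delta e^{-2n\varepsilon}e^{2(n-k)\varepsilon}=\delta e^{-2k\varepsilon}$.

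The main obstacle I anticipate is purely bookkeeping rather than conceptual: one must be careful that the intermediate points genuinely lie in the domain $\widetilde B_{h^k(x)}(\delta_1\ell(h^k(x))^{-1})$ where Lemma \ref{L43} and Corollaries \ref{C42}–\ref{C43} apply — this is exactly what the standing hypothesis $\widetilde h^k_x(u)\in\widetilde B_{h^k(x)}(\delta_1\ell(x)^{-1})$ provides, together with the slow-variation inequality $\ell(h^{\pm}(x))\le e^\varepsilon\ell(x)$ — and that the splitting $\cB_y=E^u(y,h)\oplus E^c(y,h)\oplus E^s(y,h)$ used at each step is the one based at $y=h^k(x)$, with the norm $|\cdot|_y$. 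Since $\delta$ is taken small (in particular $\delta\le\delta_1$, which we may assume since $\delta e^{-2n\varepsilon}\le\delta$), all the radii appearing stay below $\delta_1\ell^{-1}$, so the machinery is legitimately applicable throughout. Once the domain issue is dispatched, the two inductions are short.
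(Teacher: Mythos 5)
Your proposal is essentially the paper's argument, landing on the same mechanism: for each $k$, split according to which block of the Lyapunov-norm max (unstable vs.\ center-stable in part (1), unstable-center vs.\ stable in part (2)) realizes $|\widetilde h^k_x(u)|_{h^k(x)}$, and then propagate either the time-$0$ constraint forward or the time-$n$ constraint backward via Corollary~\ref{C43} and Remark~\ref{R1}. The one presentational difference is that the paper's proof is \emph{not} an induction at all; for each fixed $k$ it goes straight to the endpoints (forward $k$ steps from $x$, or backward $n-k$ steps from $h^n(x)$), using only the two hypotheses of the lemma, so the downward/upward induction you wrap around the argument is redundant — indeed you half-notice this yourself when the one-step contraction turns out to be "the wrong direction" and you fall back on the two-endpoint pinching. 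Your part (2) sketch, which bounds the $E^s$- and $E^{uc}$-components separately (the $E^s$ part via a one-step contraction, the $E^{uc}$ part via the backward estimate of Remark~\ref{R1}), does go through because $\pi^\tau$ is $1$-Lipschitz in the Lyapunov norm and $e^{-4\varepsilon}+\delta_1<e^{-2\varepsilon}$, but it is bulkier than the paper's case-on-dominant-component argument, which only ever has to estimate one component; also your intermediate inequality $|\pi^{uc}\widetilde h^k_x(u)|\le e^{-2(n-k)\varepsilon}|\widetilde h^n_x(u)|$ has the exponent sign flipped (it should be $e^{2(n-k)\varepsilon}$), although the final arithmetic you wrote down is correct.
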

	\begin{proof}
		For simplicity, we
		write $\pi^{\tau}_{h^{k}(x),h}$, $|\cdot|_{h^{k}(x)}$ as
		$\pi^{\tau}_{k}$,  $|\cdot|_{k}$ respectively.
		
		Note that
		$$|\widetilde{h}^{k}_x(u)|_{k}=\max\{|\pi_{k}^{u}\widetilde{h}^{k}_x(u)|_{k}, |\pi_{k}^{cs}\widetilde{h}^{k}_x(u)|_{k}\}.$$
		If $ |\widetilde{h}^{k}_x(u)|_{k}=|\pi_{k}^{u}\widetilde{h}^{k}_x(u)|_{k}$. By (1) of Corollary \ref{C43}, we have that
		$$ \delta\geq|\widetilde{h}^{n}_x(u)|_{n}\geq e^{2(n-k)\varepsilon} |\widetilde{h}^{k}_x(u)|_{k}$$
		Thus, $|\widetilde{h}^{k}_x(u)|_{k}\leq \delta e^{-2(n-k)\varepsilon}$. If
		$|\widetilde{h}^{k}_x(u)|_{k}=|\pi_{k}^{cs}\widetilde{h}^{k}_x(u)|_{k}$. By (2) of Corollary \ref{C43}, we have that
		$$|\widetilde{h}^{k}_x(u)|_{h^k(x)}\leq e^{2k\varepsilon}|u|_{x} \leq \delta e^{-2(n-k)\varepsilon}.$$
		This proves the first statement.
		
		To prove the second statement, note that $$|\widetilde{h}^{k}_x(u)|_{h^k(x)}=\max\{|\pi_{k}^{uc}\widetilde{h}^{k}_x(u)|_{k},|\pi_{k}^{s}\widetilde{h}^{k}_x(u)|_{k}\}.$$
		If
		$|\widetilde{h}^{k}_x(u)|_{k}=|\pi_{k}^{uc}\widetilde{h}^{k}_x(u)|_{k}$. By (1) of Corollary \ref{C43} and Remark \ref{R1}, we have that
		$$ \delta e^{-2n\varepsilon} \geq|\widetilde{h}^{n}_x(u)|_{n}\geq e^{-2(n-k)\varepsilon} |\widetilde{h}^{k}_x(u)|_{k}.$$
		Thus, $|\widetilde{h}^{k}_x(u)|_{k}\leq \delta e^{-2k\varepsilon}$. If
		$|\widetilde{h}^{k}_x(u)|_{k}=|\pi_{k}^{s}\widetilde{h}^{k}_x(u)|_{k}$. By (2) of Corollary \ref{C43} and Remark \ref{R1}, we have that
		$$|\widetilde{h}^{k}_x(u)|_{k}\leq e^{-2k\varepsilon}|u|_{x} \leq \delta e^{-2k\varepsilon}.$$
		This completes the proof of the lemma.
	\end{proof}
	
	Let $\Gamma_\ell:=\{x\in \Gamma:\ell(x)\leq \ell\}$, and choose a sufficiently large $\ell$ such that $\mu(\Gamma_\ell)>0$. Let $h=f$ or $g$, for every $x\in \Gamma_\ell$, let $\tau_{h}(x)$ be the smallest positive integer $n$ such that $h^n(x)\in \Gamma_\ell$. By the  Poincar\'{e}'s recurrence theorem, we have that  $\tau_{h}(x)<\infty$ for $\mu$-almost every  $x\in \Gamma_\ell$, extend $\tau_{h}(x)$ to $A$ by
	letting $\tau_{h}(x)=0$ if $x\notin \Gamma_{\ell}$.
	For each $\delta>0$,  define a function $\rho_{\delta,h}:A \rightarrow (0,\infty)$ as follows:
	$$\rho_{\delta,h}(x)=\min\{\delta,\delta_1 \ell^{-2}e^{-(\lambda+\varepsilon)\tau_{h}(x)}\}$$
    see \eqref{eq:Expanding} for the choice of $\lambda$.
	Note that $\log \rho_{\delta,h}$ is integrable since $\int \tau_{h}(x) d \mu=(\mu(\Gamma_\ell))^{-1}$, and the family of functions $\{\rho_{\delta,h}:\delta>0\}$ satisfies the conditions of Proposition \ref{P5}.
	Recall that $B(x,r)=\{y\in \cB:|x-y|\leq r\}$, and
	$$B_n(x,\rho_{\delta,h},h)=\bigcap_{k=0}^{n}h^{-k} B(h^{k}(x),\rho_{\delta,h}(h^kx)).$$
	\begin{lemma}\label{L512} Let  $h=f$ or $g$,
		$0<\delta<\delta_1\ell^{-2}$,  $x\in\Gamma_\ell\cap h^{-n}\Gamma_\ell$ and $n\in \N$. For each $k=0,1\cdots,n$, the following properties hold:
		\begin{enumerate}
			\item[(1)]$ \widetilde{h}^{k}_x(\exp^{-1}_x (B_n(x,\rho_{\delta,h},h)\cap B(x,\delta e^{-2n\varepsilon}))) \subset
			\widetilde{B}_{h^k(x)}(\delta \ell e^{-2(n-k)\varepsilon});$
			\item[(2)]$
			\widetilde{h}^{k}_x(\exp^{-1}_x (B_n(x,\rho_{\delta,h},h)\cap h^{-n}(B(h^n(x),\delta e^{-2n\varepsilon})))) \subset
			\widetilde{B}_{h^k(x)}(\delta \ell e^{-2k\varepsilon}).$
		\end{enumerate}	
	\end{lemma}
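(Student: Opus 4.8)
The plan is to deduce both inclusions from Lemma \ref{L511}, applied with $\delta$ replaced by $\delta\ell$, once we have (a) verified that the orbit segment $\widetilde{h}^0_x(u),\dots,\widetilde{h}^n_x(u)$ stays inside the region $\widetilde{B}_{h^k(x)}(\delta_1\ell(h^k(x))^{-1})$ on which Lemma \ref{L43} and its corollaries apply, and (b) translated the hypotheses concerning $|\cdot|$-balls at the two endpoints $k=0$ and $k=n$ into hypotheses concerning Lyapunov-norm balls by means of \eqref{eq:norm}.

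Fix $y$ in the set appearing in (1) (respectively (2)) and put $u=\exp_x^{-1}(y)\in\cB_x$. Since $\exp_z$ is the translation $v\mapsto v+z$, we have $\widetilde{h}^k_x(u)=h^k(y)-h^k(x)$ for all $k$, hence $|\widetilde{h}^k_x(u)|\le\rho_{\delta,h}(h^k(x))$ for $k=0,\dots,n$ by the definition of $B_n(x,\rho_{\delta,h},h)$. The first, and main, step is to show
\[
\widetilde{h}^k_x(u)\in\widetilde{B}_{h^k(x)}\bigl(\delta_1\ell(h^k(x))^{-1}\bigr),\qquad k=0,\dots,n,
\]
which is precisely the hypothesis of Lemma \ref{L511}. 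I would prove this by induction over the successive visits $0=k_0<k_1<\dots<k_N=n$ of the orbit of $x$ to $\Gamma_\ell$ (these exist and are $\le n$ since $x,h^n(x)\in\Gamma_\ell$). At a visit time $k_i$, with $m_i:=\tau_h(h^{k_i}(x))=k_{i+1}-k_i$, the definition of $\rho_{\delta,h}$, the bound $\ell(h^{k_i}(x))\le\ell$, and \eqref{eq:norm} give
\[
|\widetilde{h}^{k_i}_x(u)|_{h^{k_i}(x)}\le\ell\cdot\rho_{\delta,h}(h^{k_i}(x))\le\delta_1\ell^{-1}e^{-(\lambda+\varepsilon)m_i};
\]
then, iterating the expansion estimate \eqref{eq:Expanding} for $t=0,1,\dots,m_i$ (legitimate precisely as long as the orbit stays in the above region, which the induction is maintaining) and using $\ell(h^{k_i+t}(x))\le e^{t\varepsilon}\ell$ (from $\ell(h^{\pm}(z))\le e^\varepsilon\ell(z)$) together with $t\le m_i$ and $\lambda>0$, one obtains
\[
|\widetilde{h}^{k_i+t}_x(u)|_{h^{k_i+t}(x)}\le e^{t\lambda}\,\delta_1\ell^{-1}e^{-(\lambda+\varepsilon)m_i}\le\delta_1 e^{-t\varepsilon}\ell^{-1}\le\delta_1\ell(h^{k_i+t}(x))^{-1}.
\]
This keeps the orbit in the good region on $\{k_i,\dots,k_{i+1}\}$ and, at $t=m_i$, returns it to $\widetilde{B}_{h^{k_{i+1}}(x)}(\delta_1\ell^{-1})$, which restarts the induction; the exponential weight $e^{-(\lambda+\varepsilon)\tau_h}$ in the definition of $\rho_{\delta,h}$ is tailored exactly so that this chain of inequalities closes.

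With the orbit thus kept in the good region, it remains to feed the endpoint data into Lemma \ref{L511}. For (1): $y\in B(x,\delta e^{-2n\varepsilon})$ and $\ell(x)\le\ell$ give $|u|_x\le\ell\delta e^{-2n\varepsilon}$, i.e. $u\in\widetilde{B}_x(\delta\ell\,e^{-2n\varepsilon})$, while $|\widetilde{h}^n_x(u)|\le\rho_{\delta,h}(h^n(x))\le\delta$ and $h^n(x)\in\Gamma_\ell$ give $\widetilde{h}^n_x(u)\in\widetilde{B}_{h^n(x)}(\delta\ell)$; since $\delta\ell<\delta_1\ell^{-1}$ by $\delta<\delta_1\ell^{-2}$, Lemma \ref{L511}(1) with $\delta$ replaced by $\delta\ell$ yields $\widetilde{h}^k_x(u)\in\widetilde{B}_{h^k(x)}(\delta\ell\,e^{-2(n-k)\varepsilon})$, which is (1). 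For (2): $y\in B(x,\rho_{\delta,h}(x))$ with $\rho_{\delta,h}(x)\le\delta$ gives $u\in\widetilde{B}_x(\delta\ell)$, and $h^n(y)\in B(h^n(x),\delta e^{-2n\varepsilon})$ with $h^n(x)\in\Gamma_\ell$ gives $\widetilde{h}^n_x(u)\in\widetilde{B}_{h^n(x)}(\delta\ell\,e^{-2n\varepsilon})$; then Lemma \ref{L511}(2) with $\delta$ replaced by $\delta\ell$ gives (2). I expect the main obstacle to be the inductive first step — the simultaneous bookkeeping of $\ell$, the return times to $\Gamma_\ell$, and the weight in $\rho_{\delta,h}$ along the orbit; the endpoint conversion is a routine use of \eqref{eq:norm}.
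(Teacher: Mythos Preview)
Your proposal is correct and follows essentially the same approach as the paper: first show that the orbit segment stays in the region $\widetilde{B}_{h^k(x)}(\delta_1\ell(h^k(x))^{-1})$ by using the definition of $\rho_{\delta,h}$ at the last visit to $\Gamma_\ell$ together with the expansion bound \eqref{eq:Expanding} and the slow variation of $\ell$, then convert the $|\cdot|$-ball endpoint conditions to Lyapunov-norm conditions via \eqref{eq:norm} and invoke Lemma~\ref{L511} with $\delta$ replaced by $\delta\ell$. Your induction over successive visits $k_0<\dots<k_N$ is just a reorganization of the paper's ``for each $k$, look at the largest $n_k\le k$ with $h^{n_k}(x)\in\Gamma_\ell$'' argument; if anything, your version is slightly more careful in making explicit that the iterated use of \eqref{eq:Expanding} is legitimate only because the induction keeps the orbit in the good region at each intermediate step.
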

	\begin{proof}
		For simplicity,
		denote $|\cdot|_{h^{k}(x)}$ by
		$|\cdot|_{k}$.
		For each $y\in B_n(x,\rho_{\delta,h},h)$, let $u=\exp^{-1}_x y$. For every $k\in \{0,1\cdots,n\}$ with $h^k(x)\in \Gamma_\ell$, we have that
		$$|\widetilde{h}^{k}_x(u)|_{k}\leq \ell(h^k(x))|\widetilde{h}^{k}_x(u)|\leq \ell|h^k(x)-h^k(y)| \leq \ell\delta \leq \ell^{-1}\delta_1\leq \delta_1\ell(h^k(x))^{-1}.$$
		If $h^k(x)\notin \Gamma_\ell$, denote by $n_k<k$ the biggest nonnegative integer such that $h^{n_k}(x)\in \Gamma_\ell$, then $\tau_{h}(h^{n_k}(x))>k-n_k$. We have
		$$|\widetilde{h}^{n_k}_x(u)|_{n_k}\leq \ell|h^{n_k}(x)-h^{n_k}(y)|\leq \ell^{-1}\delta_1 e^{-(\lambda+\varepsilon)\tau_{h}(h^{n_k}(x))} \leq \ell^{-1}\delta_1 e^{-(\lambda+\varepsilon)(k-n_k)}.$$
		By (\ref{eq:Expanding}), we have that
		$$ |\widetilde{h}^{k}_x(u)|_{k}\leq e^{\lambda(k-n_k)}|\widetilde{h}^{n_k}_x(u)|_{n_k}\leq \ell^{-1}e^{-(k-n_k)\varepsilon}\delta_1\leq \delta_1\ell(h^k(x))^{-1}$$
        since $\ell(h^{n_k}(x))=\ell$.
		Therefore, we have that
		$$|\widetilde{h}^{k}_x(u)|_{k}\leq \delta_1\ell(h^k(x))^{-1}$$
        for every $k=0,1\cdots,n$.
		Moreover, if $y\in B(x,\delta e^{-2n\varepsilon})$ then $|u|_{x}\leq \ell(x)|u|\leq \ell \delta e^{-2n\varepsilon}$. Similarly, $h^n(y)\in B(h^n(x),\delta e^{-2n\varepsilon})$ implies $|\widetilde{h}^{n}_x(u)|_{n}\leq \ell\delta e^{-2n\varepsilon}$.  Statements (1) and (2) follow immediately from Lemma \ref{L511}.
	\end{proof}

	\begin{lemma}\label{L55}
		For every $0<\delta<\delta_1\ell^{-2} e^{-\lambda-\varepsilon}$ and $x \in \Gamma_\ell \cap f^{-n}\Gamma_\ell \cap (fg)^{-n} \Gamma_\ell$, let
		\begin{align*} \Delta:=&B_n(x,\rho_{\delta,f},f)\cap B(x,\delta e^{-2n\varepsilon}) \\ &\cap f^{-n}(B_n(f^n(x),\rho_{\delta,g},g) )  \cap (fg)^{-n}(B((fg)^n(x), \delta e^{-2n\varepsilon})).
		\end{align*}
		Then we have that $\Delta\subset B_n(x,\delta \ell K_0,fg)$, where $ K_0 $ is the constant  as in Lemma \ref{L42}.
	\end{lemma}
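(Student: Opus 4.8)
The plan is to fix $y\in\Delta$, set $u=\exp_x^{-1}(y)$ and $\xi_k=\widetilde{(fg)}^{\,k}_x(u)=\exp^{-1}_{(fg)^k(x)}\big((fg)^k(y)\big)$, and reduce everything to the Lyapunov‑norm bound $|\xi_k|_{(fg)^k(x)}\le\ell\delta$ for $k=0,\dots,n$: since $\frac1{K_0}|\cdot|\le|\cdot|_z$ (Lemma~\ref{L42}(2)), this gives $|(fg)^k(y)-(fg)^k(x)|\le K_0|\xi_k|_{(fg)^k(x)}\le\delta\ell K_0$, i.e.\ $y\in B_n(x,\delta\ell K_0,fg)$. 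Because $f$ and $g$ commute, $(fg)^k=f^kg^k=g^kf^k$ and the connecting maps factor as $\widetilde{(fg)}^{\,k}_x=\widetilde g^{\,k}_{f^k(x)}\circ\widetilde f^{\,k}_x$; hence $\xi_k=\widetilde g^{\,k}_{f^k(x)}(v_k)$ with $v_k:=\widetilde f^{\,k}_x(u)=\exp^{-1}_{f^k(x)}(f^k(y))$, and likewise $\widetilde f^{\,n-k}_{(fg)^k(x)}(\xi_k)=\eta_k$ with $\eta_k:=\exp^{-1}_{g^k(f^n(x))}\big(g^k(f^n(y))\big)$.

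First I would produce the two ``small vector'' inputs using Lemma~\ref{L512}. Applying Lemma~\ref{L512}(1) with $h=f$ is legitimate since $x\in\Gamma_\ell\cap f^{-n}\Gamma_\ell$, $0<\delta<\delta_1\ell^{-2}e^{-\lambda-\varepsilon}<\delta_1\ell^{-2}$, and $y$ lies in the first two factors of $\Delta$; this yields $v_k\in\widetilde B_{f^k(x)}(\delta\ell e^{-2(n-k)\varepsilon})$ for $0\le k\le n$, together with the admissibility $v_k\in\widetilde B_{f^k(x)}(\delta_1\ell(f^k(x))^{-1})$ established inside that proof, so that Lemma~\ref{L43} and Corollaries~\ref{C41}--\ref{C43} are available along the $f$‑orbit of $y$. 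For the second input, commutativity lets one check the hypotheses of Lemma~\ref{L512}(2) with $h=g$ at the base point $w:=f^n(x)$: $w=f^n(x)\in\Gamma_\ell$ (as $x\in f^{-n}\Gamma_\ell$), $g^n(w)=(fg)^n(x)\in\Gamma_\ell$ (as $x\in(fg)^{-n}\Gamma_\ell$), $f^n(y)\in B_n(w,\rho_{\delta,g},g)$, and $g^n(f^n(y))=(fg)^n(y)\in B(g^n(w),\delta e^{-2n\varepsilon})$ --- the last two being exactly the third and fourth factors of $\Delta$. Hence $\eta_k\in\widetilde B_{g^k(f^n(x))}(\delta\ell e^{-2k\varepsilon})$ for $0\le k\le n$, with admissibility along the $g$‑orbit of $f^n(y)$. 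At the two ends the target bound is immediate: $|\xi_0|_x=|u|_x\le\ell\,|y-x|\le\ell\delta e^{-2n\varepsilon}\le\ell\delta$, and, using $(fg)^n(x)\in\Gamma_\ell$ and the last factor of $\Delta$, $|\xi_n|_{(fg)^n(x)}\le\ell\,|(fg)^n(y)-(fg)^n(x)|\le\ell\delta e^{-2n\varepsilon}\le\ell\delta$.

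The remaining --- and hardest --- step is to establish the admissibility $\xi_k\in\widetilde B_{(fg)^k(x)}(\delta_1\ell(x)^{-1})$ for $0\le k\le n$; once this is in hand, Lemma~\ref{L511}(1) applied to $h=fg$ with parameter $\ell\delta$ (and the two end bounds above as start/end data) gives $\xi_k\in\widetilde B_{(fg)^k(x)}(\ell\delta e^{-2(n-k)\varepsilon})\subset\widetilde B_{(fg)^k(x)}(\ell\delta)$ for every $k$, which is exactly $|\xi_k|_{(fg)^k(x)}\le\ell\delta$. To establish the admissibility I would argue by induction on $k$, at each step decomposing $\xi_k=\sum_{\kappa\in\cI}\pi_\kappa(\xi_k)+\pi_\alpha(\xi_k)$ along \eqref{eq:split} and controlling each summand $\pi_\kappa(\xi_k)$ by playing the identities $\xi_k=\widetilde g^{\,k}_{f^k(x)}(v_k)$ and $\widetilde f^{\,n-k}_{(fg)^k(x)}(\xi_k)=\eta_k$ against each other: the key structural facts are that $Df$ preserves the $Dg$‑Oseledets splitting and $Dg$ preserves the $Df$‑Oseledets splitting (Lemma~\ref{L2}, \eqref{eq:E31}--\eqref{eq:E32}, Corollary~\ref{C3}), so that on each invariant summand $V_\kappa$ these identities decouple up to the $\delta$‑small nonlinear errors controlled by Lemma~\ref{L43}, and that the Lyapunov‑norm estimates of Corollaries~\ref{C41}--\ref{C43} and Remark~\ref{R1} let one, for each $V_\kappa$ and each $k$, bound $\pi_\kappa(\xi_k)$ by iterating in a direction (forward $\widetilde g^{\,k}$ from $v_k$, forward $\widetilde f^{\,n-k}$ from $\xi_k$ towards the small $\eta_k$, or forward $\widetilde{(fg)}^{\,n-k}$ towards the small $\xi_n$) along which that component is not contracted; $E_\alpha$, on which every connecting map contracts, is harmless. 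The $\varepsilon$‑bookkeeping closes because $\Delta$ is intersected with the two balls of radius $\delta e^{-2n\varepsilon}$ and $\varepsilon$ is chosen tiny in \eqref{eq:vp}.

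The step I expect to be the main obstacle is exactly this last one. In the invertible setting of \cite{Hu96} one compares the $(fg)$‑orbit of $y$ with that of $x$ directly, or along an $L$‑shaped path lying in the ``bottom'' $f$‑edge and the ``right'' $g$‑edge of the orbit lattice; here $f$ need not be invertible, so the $g$‑shadowing of $f^n(y)$ cannot be pulled back to a $g$‑shadowing of $f^k(y)$, and the interior diagonal points $(fg)^k(y)$ with $0<k<n$ are not reachable by any single orbit comparison built from the edge data alone. One is forced into a genuinely two‑dimensional argument on the orbit lattice $\{f^ig^j(x)\}$, intertwining the $f$‑ and $g$‑shadowing estimates and using the mutual invariance of the two Oseledets splittings, with the $\varepsilon$‑bookkeeping arranged so that no accumulated error overruns the admissible radius $\delta_1\ell(x)^{-1}$; this is the technical heart of the lemma.
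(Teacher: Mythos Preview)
Your setup matches the paper exactly: the two applications of Lemma~\ref{L512} produce the edge data $v_k\in\widetilde B_{f^k(x)}(\delta\ell\,e^{-2(n-k)\varepsilon})$ and $\eta_k\in\widetilde B_{g^k(f^n(x))}(\delta\ell\,e^{-2k\varepsilon})$, the endpoint bounds on $\xi_0,\xi_n$ are correct, and you rightly isolate the admissibility of $\xi_k$ along the diagonal as the crux, with Lemma~\ref{L511} finishing the job afterwards. Your final remark that a genuinely two-dimensional lattice argument is unavoidable is also the correct diagnosis.

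The gap is in the mechanism you propose for that admissibility. Decomposing $\xi_k$ along \eqref{eq:split} and bounding each $\pi_\kappa(\xi_k)$ via the identities $\xi_k=\widetilde g^{\,k}_{f^k(x)}(v_k)$ and $\widetilde f^{\,n-k}_{(fg)^k(x)}(\xi_k)=\eta_k$ is circular: the nonlinear Lipschitz and hyperbolic estimates of Lemma~\ref{L43} and Corollaries~\ref{C42}--\ref{C43} that would let you propagate a component along a non-contracting direction are valid only inside the admissible balls $\widetilde B_z(\delta_1\ell(z)^{-1})$; using them on the $k$-fold $\widetilde g$-leg or the $(n-k)$-fold $\widetilde f$-leg presupposes admissibility at every interior lattice site $(k,j)$ and $(k+j,k)$, which is strictly more than the diagonal points you are trying to control. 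An induction on $k$ along the diagonal alone does not supply these. There is also a linear-level issue: since Theorem~\ref{A} does not assume $E^u(\cdot,f)=E^u(\cdot,g)$, there can be summands $V_\kappa$ with $\lambda^\kappa(g)>0$ and $\lambda^\kappa(f)<0$ (and $\lambda^\kappa(fg)$ of either sign), for which none of the three forward iterations you list yields a bound on $\pi_\kappa(\xi_k)$ from small edge data.

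The paper dispenses with Oseledets decomposition entirely and runs a diagonal-sweep induction. For each $k=0,\dots,n$ it bounds the whole offset diagonal $\bigl\{\widetilde{(fg)}^{\,i}_{f^{n-k}(x)}\widetilde f^{\,n-k}_x u:\ 0\le i\le k\bigr\}$, whose two endpoints are precisely the known edge points $v_{n-k}$ (at $i=0$) and $\eta_k$ (at $i=k$). The step $k-1\to k$ is elementary: one application of the crude bound~\eqref{eq:Expanding} --- a single $\widetilde g$-step, whose input lies on diagonal $k-1$ and is already small by the inductive hypothesis --- gives admissibility along diagonal $k$; then Lemma~\ref{L511} for $h=fg$ along that diagonal, fed with \eqref{eq:55} and \eqref{add1} as start/end data, produces the sharp bound $\delta\ell\,e^{-2\max\{i,k-i\}\varepsilon}$ needed for the next step. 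Setting $k=n$ yields $|\xi_i|_{(fg)^i(x)}\le\delta\ell$ for all $i$, and the conclusion follows exactly as you wrote.
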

	\begin{proof}
		For each $y\in \Delta$, let $u=\exp^{-1}_x y$. Since $x\in \Gamma_\ell \cap f^{-n}\Gamma_\ell$ and $y\in B_n(x,\rho_{\delta,f},f)\cap B(x,\delta e^{-2n\varepsilon})$, by (1) of Lemma \ref{L512} we have that
   \begin{eqnarray}\label{eq:55}
   \widetilde{f}^{n-k}_{x}u\in \widetilde{B}_{f^{n-k}(x)}(\delta \ell e^{-2k\varepsilon})
   \end{eqnarray}for every $0\leq k\leq n$. Similarly, since  $f^n(x)\in \Gamma_\ell \cap g^{-n}\Gamma_\ell$ and $f^{n}(y)\in  B_n(f^n(x),\rho_{\delta,g},g) )  \cap g^{-n}(B((fg)^n(x), \delta e^{-2n\varepsilon}))$, by (2) of Lemma \ref{L512} we have that
   \begin{eqnarray}\label{add1}
   \widetilde{(fg)}^{k}_{f^{n-k}(x)}\circ   \widetilde{f}^{n-k}_{x}u\in \widetilde{B}_{f^ng^k(x)}(\delta \ell e^{-2k\varepsilon})
   \end{eqnarray} for every $0\leq k\leq n$.

		For each $0\leq k\leq n$, we {\bf claim} that
		\begin{equation}\label{claim2}
		\widetilde{(fg)}^{i}_{f^{n-k}(x)}\circ \widetilde{f}^{n-k}_{x}u \in \widetilde{B}_{(fg)^i(f^{n-k}(x))}(\delta \ell e^{-2\max \{i,k-i\}\varepsilon}) \ \text{for} \, i=0,1,\cdots, k.
		\end{equation}
		We will show the above claim by induction. For $k=0$, the claim follows from (\ref{eq:55}). Suppose that the claim is true for $k-1$, i.e.,
		$$\widetilde{(fg)}^{i-1}_{f^{n-k+1}(x)}\circ \widetilde{f}^{n-k+1}_{x}u \in \widetilde{B}_{(fg)^{i-1}(f^{n-k+1}(x))}(\delta \ell e^{-2\max \{i-1,k-i\}\varepsilon})$$
		for every $1\leq i \leq k$.
		Note that
		$$(\widetilde{g}\circ\widetilde{(fg)}^{i-1})_{f^{n-k+1}x}\circ \widetilde{f}^{n-k+1}_{x} u = \widetilde{(fg)}^{i}_{f^{n-k}(x)}\circ \widetilde{f}^{n-k}_{x}u.$$
		By (\ref{eq:Expanding}) and the  induction assumption, for every $1\leq i \leq k$ we have that
		\begin{align*}
		|\widetilde{(fg)}^{i}\circ \widetilde{f}^{n-k}_{x}u|_{(fg)^{i}(f^{n-k}(x))}
		&\leq e^\lambda|\widetilde{(fg)}^{i-1}_{f^{n-k+1}x}\circ \widetilde{f}^{n-k+1}_{x}u|_{(fg)^{i-1}(f^{n-k+1}(x))}\\
		&\leq e^\lambda \cdot \delta \ell e^{-2\max \{i-1,k-i\}\varepsilon}\\
		&\leq \delta_1 \ell^{-1}e^{-k\varepsilon}.
		\end{align*}
        By the definition of the function $\ell(x)$, we have that
		$$\ell((fg)^{i}(f^{n-k}(x)))=\ell(f^{-(k-i)}g^{i}f^n(x))\leq \ell(f^n(x))e^{(k-i)\varepsilon+i\varepsilon}\leq \ell e^{k\varepsilon} $$
        since $f^n(x)\in \Gamma_\ell$.
		Therefore, we have that
		$$\widetilde{(fg)}^{i}\circ \widetilde{f}^{n-k}_{x}u \in  \widetilde{B}_{(fg)^if^{n-k}(x)}(\delta_1 \ell((fg)^{i}f^{n-k}(x))^{-1})  \quad \forall i=1,2,\cdots,  k.$$
		Applying  Lemma \ref{L511} to vector $\widetilde{f}^{n-k}_{x}u$ and map $\widetilde{fg}$ with \eqref{eq:55} and \eqref{add1}, we have that
		$$\widetilde{(fg)}^{i}_{f^{n-k}(x)}\circ \widetilde{f}^{n-k}_{x}u \in \widetilde{B}_{(fg)^i(f^{n-k}(x))}(\delta \ell e^{-2\max \{k,k-i\}\varepsilon})\quad \forall i=0,1,\cdots, k.$$ This completes the proof of the claim.
		
		By the claim, for every $0 \leq i \leq n$ we have that
		$$|\widetilde{(fg)}^{i}_x u|_{(fg)^i(x)}\leq \delta \ell e^{-2\max \{i,n-i\}\varepsilon}\leq \delta \ell.$$
		Since $ \widetilde{(fg)}^{i}_x u=(fg)^i(y)-(fg)^i(x)$, it follows from  \eqref{eq:norm} that
		\begin{align*}
		|(fg)^i(x)-(fg)^i(y)|=|\widetilde{(fg)}^{i}_x u|\leq K_0 |\widetilde{(fg)}^{i}_x u|_{(fg)^i(x)} \leq  \delta \ell K_0
		\end{align*}
		for every $0 \leq i \leq n$. Hence, $y\in B_n(x,\delta \ell K_0,fg).$  This completes the proof of the lemma.
	\end{proof}
	
\subsection{Proof of  Theorem A}\label{sub5.3}
	
	Let  $N(r)$ denote the minimal number of balls of radius $r$ covering $A$.
	By the assumption (ii) of (H2),  there exist numbers $D_0>0$ and $\delta_2>0$, such that for every $0<r<\delta_2$ we have that
	\begin{equation}\label{eq:Box}
	N(r)\leq r^{-D_0}.
	\end{equation}
	
	\begin{proof}[Proof of  Theorem \ref{A}]
		Take $r\in(0,1)$. Fix $\ell>1$ such that $\mu(\Gamma_\ell)>1-r/5$. Let $\rho_{\delta,f}$ and $\rho_{\delta,g}$ be two families of functions  as in Section \ref{Sub52}.
		
		For $h=f$ or $g$, $n\in \mathbb{N}$ and sufficiently small number $\varepsilon>0$ and $\delta>0$, let
		$$A^{h}_{n,\delta,\varepsilon}=\Big\{x\in \Gamma:\mu(B_k(x,\rho_{\delta,h},h))\geq e^{-k(h_{\mu}(h)+\varepsilon)} \ \forall\ k\geq n\Big\}.$$
		It follows from Proposition \ref{P5} that
		$$h_{\mu}(h)\geq \limsup_{n\rightarrow \infty}  -\frac{1}{n} \log \mu(B_n(x,\rho_{\delta,h},h)) \quad \mu-a.e.$$
		Thus, for every $\delta>0$ we have $\mu(A^{h}_{n,\delta,\varepsilon})\rightarrow 1$ as $n\rightarrow \infty$. Choose $n_h(\delta)>0$, such that $\mu(A^{h}_{n,\delta,\varepsilon})>1-r/5$ for every $n>n_h(\delta)$.
		
		By the definition of $A^{h}_{n,\delta,\varepsilon}$, there are at most $e^{n(h_{\mu}(h)+\varepsilon)}$ disjoint $(n,\rho_{\delta,h},h)$-balls centered at points in $A^{h}_{n,\delta,\varepsilon}$. So the same number of $(n,2\rho_{\delta,h},h)$-balls centered at points in $A^{h}_{n,\delta,\varepsilon}$ can cover $A^{h}_{n,\delta,\varepsilon}$. That is, there exists a set $S_h\subset A^{h}_{n,\delta,\varepsilon}$ with
		$|S_h|\leq e^{n(h_{\mu}(h)+\varepsilon)}$ such that
		$$A^{h}_{n,\delta,\varepsilon} \subset \bigcup_{x\in S_h}B_n(x,2\rho_{\delta,h},h),$$
		where $|S|$ denotes the cardinality of the set $S$.
		
		Take $0<\delta<\min\{\frac{1}{4}\delta_1\ell^{-2} e^{-\lambda-\varepsilon} ,\delta_2\}$. By (\ref{eq:Box}), there exists a set $S_0\subset A$ with $|S_0|=N(2\delta e^{-2n\varepsilon})\leq (2\delta e^{-2n\varepsilon})^{-D_0}$ such that
		$$A\subset \bigcup_{x\in S_0}B(x,2\delta e^{-2n\varepsilon}).$$
		For each $n>\max\{n_f(\delta),n_g(\delta)\}$, let
		$$A_n=A^{f}_{n,\delta,\varepsilon}\cup f^{-n} A^{g}_{n,\delta,\varepsilon} \cap \Gamma_\ell \cap f^{-n}\Gamma_\ell \cap (fg)^{-n} \Gamma_\ell.$$
		Obviously, $\mu(A_n)\geq 1-r$. For every $x_f\in S_f, x_g\in S_g,{x}'\in S_0$ and ${x}''\in S_0$, if the intersection
		\begin{align*} A_n\cap &B_n(x_f,2\rho_{\delta,f},f)\cap B({x}',2\delta e^{-2n\varepsilon}) \\ &\cap f^{-n}(B_n(x_g,2\rho_{\delta,g},g) )  \cap (fg)^{-n}(B({x}'', 2\delta e^{-2n\varepsilon}))
		\end{align*}
		is not empty, then for each $x$ in it, the intersection is contained in the set
		\begin{align*} &B_n(x,4\rho_{\delta,f},f)\cap B(x,4\delta e^{-2n\varepsilon}) \\ &\cap f^{-n}(B_n(f^n(x),4\rho_{\delta,g},g) )  \cap (fg)^{-n}(B((fg)^n(x), 4\delta e^{-2n\varepsilon})).
		\end{align*}
		By  Lemma \ref{L55}, the above set is contained in $B_n(x,4\delta \ell K_0,fg)$.
	There are at most $|S_f|\cdot|S_g|\cdot|S_0|^2 $ different such intersections. Note that $\mu(A_n)>1-r$, these intersections cover $A_n$ and each one is contained in an $(n,4\delta \ell K_0,fg)$-ball. Thus, we have that
		\begin{align*}
		N_n(4\delta \ell K_0,r,fg)&\leq |S_f|\cdot|S_g|\cdot|S_0|^2\\
		&\leq e^{n(h_{\mu}(f)+h_{\mu}(g)+2\varepsilon)}\cdot (2\delta e^{-2n\varepsilon})^{-2D_0}\\
		&\leq (2\delta)^{-2D_0}\cdot e^{n(h_{\mu}(f)+h_{\mu}(g)+(4D_0+2)\varepsilon)}.
		\end{align*}
		Therefore, by Lemma \ref{L52}
		$$h_{\mu}(fg)\leq h_{\mu}(f)+h_{\mu}(g)+(4D_0+2)\varepsilon.$$
		Since $\varepsilon$ is arbitrary, one has that
		$$ h_{\mu}(fg)\leq h_{\mu}(f)+h_{\mu}(g).$$
		This completes the proof of Theorem \ref{A}.
	\end{proof}
	%%%%%%%%%%%%%%%%%%%%%%%%%%%%%%%%%
	\section{\textbf{Local unstable manifold}}\label{S6}
	This section is a continuation of Section \ref{SEC4}, and the notations $\widetilde{h}_{x}, \widetilde{B}_{x}(r),\delta_1,\varepsilon$ and $\ell(x)$ are the same as in  Section \ref{SEC4}. We will recall the unstable manifold theorem in \cite{Young17}, and give an equivalent characterization of the local unstable manifolds.
	
	Notice that $E^{u}(x,f)$ or $E^{u}(x,g)$ equal to $\{0\}$ if and only if $(f, \mu)$ or $(g, \mu)$ has no positive Lyapunov exponents. In this case, Theorem \ref{B} follows from Ruelle's inequality \cite{Li12} immediately. In the rest of the paper, we  assume that $(f, \mu)$ and $(g, \mu)$ have positive Lyapunov exponents.
	
	For $h=f,g$, $\tau=u,c,s$ and $r>0$, we write $\widetilde{B}^{\tau}_{x}(r,h)=\{v\in E^{\tau}(x,h):|v|_{x}\leq r\}$ and $\widetilde{B}^{cs}_{x}(r,h)=\widetilde{B}^{c}_{x}(r,h)+\widetilde{B}^{s}_{x}(r,h)$. Hence,  $\widetilde{B}_{x}(r)=\widetilde{B}^{u}_{x}(r,h)+\widetilde{B}^{cs}_{x}(r,h)$.
	
	The unstable (stable) manifold theory is well-known for finite dimensional systems.
    See \cite[ Theorem 6.1]{Young17} for this theorem  of maps on a Banach space that we will recall in below.
    See also \cite{Lian11} and \cite{Lian20} for the detailed proofs of this theorem for maps on Hilbert space and maps on Banach space respectively.

	\begin{theorem} \label{Unstable}
		For $h=f$ or $g$, there exists ${\delta}'_1\in(0,\delta_1)$ such that for every $0<\delta<{\delta}'_1$, there exists a unique family of continuous maps $ \{\sigma^{h}_{x}:\widetilde{B}^{u}_{x}(\delta \ell(x)^{-1},h)\rightarrow \widetilde{B}^{cs}_{x}(\delta \ell(x)^{-1},h) \}_{x\in\Gamma}$ so that
		$$\sigma^{h}_{x}(0)=0 \ \text{and} \ \widetilde{h}_x(\graph(\sigma^{h}_{x}))\supset \graph(\sigma^{h}_{h(x)}).  $$
		Moreover,  the  family $\{\sigma^{h}_{x}\}_{x\in \Gamma}$ has the following additional properties with respect to the  norm $|\cdot|_{x}$:
		\begin{enumerate}
			\item[(1)] $ \sigma^{h}_{x} $ is $C^{1+\Lip}$ Fr\'{e}chet differentiable, with $(D\sigma^{h}_{x})_0=0$;
			\item[(2)] $ \Lip\sigma^{h}_{x}\leq 1/10$ and $\Lip(D\sigma^{h}_{x})\leq C\ell(x)$ where $ C > 0 $ is independent of $  x $;
			\item[(3)] if $\widetilde{h}_x(u_i+\sigma^{h}_{x}(u_i))\in \widetilde{B}_{hx}(\delta \ell(hx)^{-1}) $ for $u_i\in \widetilde{B}^{u}_{x}(\delta \ell(x)^{-1},h)$ $(i=1,2)$, then
			\begin{align*}
			|\widetilde{h}_x(u_1+&\sigma^{h}_{x}(u_1))-\widetilde{h}_x(u_2+\sigma^{h}_{x}(u_2)) |_{hx}\\
			\geq &(e^{\lambda^{+}(h)-\varepsilon}-\delta)|u_1+\sigma^{h}_{x}(u_1)-u_2-\sigma^{h}_{x}(u_2)|_{x}.
			\end{align*}
		\end{enumerate}
	\end{theorem}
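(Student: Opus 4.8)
The plan is to obtain $\{\sigma^{h}_{x}\}$ as the fixed point of the Hadamard graph transform carried out along orbits, following the scheme of \cite[Theorem 6.1]{Young17} (see also \cite{Lian11,Lian20}); the fixed background metric used there is replaced here by the Lyapunov norms $|\cdot|_{x}$ of Section \ref{SEC4}, which convert the non-uniform hyperbolicity of $(h,\mu)$ into the uniform one-step estimates of Corollary \ref{C41} and the uniform Lipschitz control of Lemma \ref{L43}. Since $h|_{A}$ is a homeomorphism by (i) of (H1) and $\Gamma$ is both $f$- and $g$-invariant, for every $x\in\Gamma$ the backward orbit $\{h^{-n}(x)\}_{n\ge0}$ is well-defined and lies in $\Gamma$, so the graph transform may be iterated in the backward direction.

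First I would fix ${\delta}'_{1}\in(0,\delta_{1})$ (depending only on $\varepsilon$, $\lambda^{+}(h)$ and $M_{0}$), and for $0<\delta<{\delta}'_{1}$ and $y\in\Gamma$ introduce the complete metric space
\[
\mathcal{G}(y)=\{\sigma:\widetilde{B}^{u}_{y}(\delta\ell(y)^{-1},h)\to\widetilde{B}^{cs}_{y}(\delta\ell(y)^{-1},h)\,:\,\sigma(0)=0,\ \Lip\sigma\le 1/10\},
\]
equipped with the $C^{0}$ distance measured in $|\cdot|_{y}$. Pushing $\graph(\sigma)$, $\sigma\in\mathcal{G}(h^{-1}(y))$, forward by $\widetilde{h}_{h^{-1}(y)}$ and re-expressing the image as a graph over $E^{u}(y,h)$ defines a map $\mathcal{T}_{y}:\mathcal{G}(h^{-1}(y))\to\mathcal{G}(y)$. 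That $\mathcal{T}_{y}$ is well-defined on the stated domains, preserves the Lipschitz bound $1/10$, and contracts the $C^{0}$ distance by a factor of order $e^{2\varepsilon-\lambda^{+}(h)}$, which is $<1$, is a direct computation from Corollary \ref{C41} (providing the spectral gap $e^{\lambda^{+}(h)-\varepsilon}$ on $E^{u}$ against $e^{\varepsilon}$ on $E^{cs}$), from Lemma \ref{L43} (1) (which makes $\widetilde{h}_{x}-(D\widetilde{h}_{x})_{0}$ have Lipschitz constant $\le\delta$ on $\widetilde{B}_{x}(\delta\ell(x)^{-1})$), and from \eqref{eq:Expanding}; the choice of $\delta_{1}$ in \eqref{eq:delta} is what secures the contraction. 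Iterating along the backward orbit, the limit $\sigma^{h}_{x}:=\lim_{n\to\infty}\mathcal{T}_{x}\circ\mathcal{T}_{h^{-1}(x)}\circ\cdots\circ\mathcal{T}_{h^{-(n-1)}(x)}(0)$ exists, is independent of the chosen initial admissible section, and is the unique family with $\sigma^{h}_{x}(0)=0$ satisfying $\widetilde{h}_{x}(\graph(\sigma^{h}_{x}))\supset\graph(\sigma^{h}_{h(x)})$, since this invariance is precisely the statement that $\{\sigma^{h}_{x}\}$ is a fixed section of the bundle map induced by $\{\mathcal{T}_{y}\}$.

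Next I would establish the remaining properties. The bound $\Lip\sigma^{h}_{x}\le 1/10$ is built into $\mathcal{G}(x)$. The $C^{1+\Lip}$ regularity, together with $(D\sigma^{h}_{x})_{0}=0$ and $\Lip(D\sigma^{h}_{x})\le C\ell(x)$ with $C$ independent of $x$, follows from the standard fibre-contraction (or $C^{1}$-section) argument: the transformation induced by $\mathcal{T}$ on the derivatives of sections is itself a fibre contraction over $\mathcal{T}$ once the bound $\Lip(D\widetilde{h}_{x})\le\ell(x)$ of Lemma \ref{L43} (2) is used to control the Lipschitz variation of the derivatives, while $(D\sigma^{h}_{x})_{0}=0$ is forced by $(D\widetilde{h}_{x})_{0}=Dh_{x}$ respecting the splitting $E^{u}(x,h)\oplus E^{cs}(x,h)$. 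Finally, estimate (3) is read off from Corollary \ref{C42} and its proof: since $\Lip\sigma^{h}_{x}\le 1/10$, for $u_{1},u_{2}\in\widetilde{B}^{u}_{x}(\delta\ell(x)^{-1},h)$ the vector $u_{1}+\sigma^{h}_{x}(u_{1})-u_{2}-\sigma^{h}_{x}(u_{2})$ has $|\cdot|_{x}$-norm equal to that of its $E^{u}(x,h)$-component, so \eqref{eq:41} and Lemma \ref{L43} (1) yield the claimed lower bound with factor $e^{\lambda^{+}(h)-\varepsilon}-\delta$.

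The step I expect to be the main obstacle is the passage from the $C^{0}$ fixed point to a $C^{1+\Lip}$ invariant section carrying the explicit estimate $\Lip(D\sigma^{h}_{x})\le C\ell(x)$: the $C^{0}$ contraction is routine once the Lyapunov norms are in place, but differentiability in an infinite-dimensional Banach space, where local compactness is unavailable and every estimate must be pushed through the point-dependent norms and the shrinking domains $\delta\ell(x)^{-1}$, requires the careful fibre-contraction bookkeeping of \cite{Young17,Lian20}. The non-invertibility of $h$ away from $A$ turns out to be harmless here, because the only orbits entering the construction lie in the invariant compact set $A$, on which $h$ is a homeomorphism.
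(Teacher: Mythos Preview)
Your proposal is correct and follows exactly the route the paper intends: the paper does not prove this theorem at all but simply cites it as \cite[Theorem 6.1]{Young17} (with further references to \cite{Lian11,Lian20}), so your sketch of the Hadamard graph transform, run in the Lyapunov norms of Section~\ref{SEC4} and using Corollary~\ref{C41} and Lemma~\ref{L43} as the uniform one-step inputs, is precisely the argument behind that citation. In fact you supply more detail than the paper does.
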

	For $h=f$ or $g$. Let $W_{\delta}^{u}(x,h)=\graph(\sigma^{h}_{x})$ and define the \textbf{local unstable manifold} at $x$ with respect to $h$ by
	$$w_{\delta}^{u}(x,h)=\exp_x W_{\delta}^{u}(x,h).$$
	The \textbf{global unstable manifold} at $x$ with respect to $h$ is defined as
	$$w^{u}(x,h)=\bigcup_{n\geq 0}h^{n} w_{\delta,x}^{u}(h^{-n}(x),h),$$
	which is an immersed submanifold in $\cB$.
	\begin{remark}\label{add-re1}
		Let $x\in \Gamma$ and $0<\delta<\delta_1'$, one may not have $w_{\delta}^{u}(x,h)\subset A$. However,  by Theorem \ref{Unstable} one has $W_{\delta}^{u}(x,h)\subset \widetilde{h}^n_{h^{-n}x} W_{\delta}^{u}(h^{-n}x,h)$ for each $n>0$. Consequently, one has $w_{\delta}^{u}(x,h)\subset h^n w_{\delta}^{u}(h^{-n}x,h)$ for each $n>0$. Hence, for every $y\in w_{\delta}^{u}(x,h)$ there exists a unique $y_n\in w_{\delta}^{u}(h^{-n}x,h)$ such that $h^n(y_n)=y$.
	\end{remark}
	
	Notice that the next lemma is different to  \cite[Lemma 6.3]{Young17}, here we do not assume that the center subspaces are trivial.
	\begin{lemma} \label{L61}
		For $h=f$ or $g$ and  every $0<\delta<{\delta}'_1$, for every $x\in\Gamma$ we have that
		\begin{align*}
		w_{\delta}^{u}(x,h)=\exp_{x}\Big\{v\in \widetilde{B}_{x}(\delta \ell(x)^{-1}): \ &\forall n \in \N, \exists v_n \in \widetilde{B}_{h^{-n}x}(\delta_1 \ell(h^{-n}x)^{-1}) \ \text{such that} \\
		\widetilde{h}^{n}_{h^{-n}x}v_n=v \ &\text{and} \  |\pi_{h^{-n}(x),h}^{u} v_n|_{h^{-n}(x)}=|v_n|_{h^{-n}(x)}\Big\}
		\end{align*}
		where $\pi_{x,h}^{u}$ is the same as in Corollary \ref{C42}.
		Moreover, if $E^{c}(x,h)=\{0\}$, one has
		\begin{align*}
			w_{\delta}^{u}(x,h)=\exp_{x}\Big\{v\in \widetilde{B}_{x}(\delta \ell(x)^{-1}): \ &\forall n \in \N, \exists v_n \in \widetilde{B}_{h^{-n}x}(\delta_1 \ell(h^{-n}x)^{-1}) \ \text{such that} \\
			&\widetilde{h}^{n}_{h^{-n}x}v_n=v\Big\}.
		\end{align*}
	\end{lemma}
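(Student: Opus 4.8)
The plan is to identify $w^u_\delta(x,h)$ with $\exp_x(\graph\sigma^h_x)$ and to prove the two inclusions between $\graph\sigma^h_x$ and the set $\mathcal W_x$ appearing on the right. Throughout I would use that, $h$ being injective, each connecting map $\widetilde{h}_y$ is injective, so a point admitting a backward orbit in the prescribed balls admits a unique one, with $v_0=v$ and $\widetilde{h}_{h^{-n-1}x}(v_{n+1})=v_n$; and that in the Lyapunov norm every projection $\pi^{\tau}_{y,h}$ has operator norm $\le 1$. For $w^u_\delta(x,h)\subseteq\exp_x\mathcal W_x$: given $v\in\graph\sigma^h_x$, Remark \ref{add-re1} gives $v_n\in W^u_\delta(h^{-n}x,h)=\graph\sigma^h_{h^{-n}x}$ with $\widetilde{h}^n_{h^{-n}x}(v_n)=v$; since $\delta<\delta'_1<\delta_1$ we get $v_n\in\widetilde B_{h^{-n}x}(\delta_1\ell(h^{-n}x)^{-1})$, and writing $v_n=u_n+\sigma^h_{h^{-n}x}(u_n)$ with $u_n\in E^u(h^{-n}x,h)$, the bounds $\sigma^h(0)=0$, $\Lip\sigma^h_{h^{-n}x}\le\tfrac1{10}$ give $|\pi^{cs}_{h^{-n}x,h}v_n|\le\tfrac1{10}|u_n|\le|u_n|=|\pi^u_{h^{-n}x,h}v_n|$, so $|v_n|_{h^{-n}x}=|\pi^u_{h^{-n}x,h}v_n|_{h^{-n}x}$ and $v\in\mathcal W_x$.

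For the reverse inclusion, take $v\in\mathcal W_x$ with backward orbit $(v_n)$. First I would use the cone condition to compare $v_n$ with the fixed point $0$ of the $\widetilde{h}$'s: since $|v_n|_{h^{-n}x}=|\pi^u_{h^{-n}x,h}v_n|_{h^{-n}x}$ and the segment $v_n,\widetilde{h}_{h^{-n}x}(v_n),\dots,v$ stays in the balls of radius $\delta_1\ell(\cdot)^{-1}$, Corollary \ref{C43}(1) gives $|v|_x\ge a^n|v_n|_{h^{-n}x}$ with $a:=e^{4\varepsilon}-\delta_1$, hence $|v_n|_{h^{-n}x}\le a^{-n}|v|_x$; because $a>e^\varepsilon$, this decays faster than the domain radii, so $\pi^u_{h^{-n}x,h}v_n$ stays well inside the domain of $\sigma^h_{h^{-n}x}$ for every $n$. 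Set $g_n:=\pi^u_{h^{-n}x,h}v_n+\sigma^h_{h^{-n}x}(\pi^u_{h^{-n}x,h}v_n)\in\graph\sigma^h_{h^{-n}x}$ and $d_n:=|v_n-g_n|_{h^{-n}x}$; then $v_n-g_n\in E^{cs}(h^{-n}x,h)$ and $d_n\le\tfrac{11}{10}|v_n|_{h^{-n}x}\le\tfrac{11}{10}a^{-n}|v|_x\to0$. The heart of the argument is the one-step estimate $d_{n-1}\le b'd_n$ with $b'<a$: applying $\widetilde{h}_{h^{-n}x}$ to $v_n-g_n\in E^{cs}$, its linear part $Dh$ preserves the splitting and stretches $E^{cs}$ by at most $e^\varepsilon$ (by \eqref{eq:42}, \eqref{eq:43}, as $\lambda^-(h)<0$), while $\Lip(\widetilde{h}_{h^{-n}x}-Dh_{h^{-n}x})\le\delta_1$ (Lemma \ref{L43}), so $\widetilde{h}_{h^{-n}x}(v_n)-\widetilde{h}_{h^{-n}x}(g_n)$ has $E^{cs}$-part of norm $\le(e^\varepsilon+\delta_1)d_n$ and $E^u$-part of norm $\le\delta_1 d_n$; since $\widetilde{h}_{h^{-n}x}(v_n)=v_{n-1}$ and $\widetilde{h}_{h^{-n}x}(g_n)\in\graph\sigma^h_{h^{-n+1}x}$ (its $E^u$-projection still lies in the domain), comparing with $g_{n-1}$ through the Lipschitz graph gives $d_{n-1}\le b'd_n$ with $b':=e^\varepsilon+c\,\delta_1$ for an absolute constant $c$, and $b'<a$ once $\delta_1$ is taken small relative to the already-fixed $\varepsilon$ — an extra smallness condition on $\delta_1$ I would record. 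Iterating, $d_{n_0}\le(b')^{n-n_0}d_n\le\tfrac{11}{10}(b')^{-n_0}(b'/a)^n|v|_x\to0$ as $n\to\infty$, so $d_{n_0}=0$ for all $n_0\ge1$; in particular $v_1=g_1\in\graph\sigma^h_{h^{-1}x}$, and since $\pi^u v=\pi^u(\widetilde{h}(v_1))$ lies in the domain at $x$ (it has norm $\le|v|_x\le\delta\ell(x)^{-1}$), the overflowing identity $\widetilde{h}_{h^{-1}x}(\graph\sigma^h_{h^{-1}x})\cap\widetilde B_x(\delta\ell(x)^{-1})=\graph\sigma^h_x$ from Theorem \ref{Unstable} yields $v=\widetilde{h}(v_1)\in\graph\sigma^h_x$, i.e. $\exp_x v\in w^u_\delta(x,h)$.

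For the ``moreover'' statement, suppose $E^c(x,h)=\{0\}$, so $E^{cs}$ coincides with $E^s$ along the orbit; I claim that then any backward orbit $(v_n)$ of $v$ with $v_n\in\widetilde B_{h^{-n}x}(\delta_1\ell(h^{-n}x)^{-1})$ automatically satisfies $|v_n|_{h^{-n}x}=|\pi^u_{h^{-n}x,h}v_n|_{h^{-n}x}$, so the cone condition may be dropped. Indeed, if some $v_m\ne0$ had $|v_m|_{h^{-m}x}=|\pi^s_{h^{-m}x,h}v_m|_{h^{-m}x}$, then comparing $v_{m+j}$ with $0$ and applying Remark \ref{R1} gives $|v_m|_{h^{-m}x}\le(e^{-4\varepsilon}+\delta_1)^j|v_{m+j}|_{h^{-(m+j)}x}$, so $|v_{m+j}|_{h^{-(m+j)}x}\ge(e^{-4\varepsilon}+\delta_1)^{-j}|v_m|_{h^{-m}x}\to\infty$ since $e^{-4\varepsilon}+\delta_1<1$, contradicting $|v_{m+j}|_{h^{-(m+j)}x}\le\delta_1$ (as $\ell\ge1$); and if $v_m=0$ for some $m$ then $v=\widetilde{h}^m_{h^{-m}x}(0)=0$ and there is nothing to prove. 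Hence the two displayed formulas agree.

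I expect the main obstacle to be exactly the estimate $d_{n-1}\le b'd_n$ with $b'<a$ in the reverse inclusion: one must use the \emph{sharp} hyperbolicity rates — $E^u$-cone vectors grow by at least $e^{4\varepsilon}-\delta_1$ per step and $E^{cs}$-cone vectors by at most $e^\varepsilon+\delta_1$ per step — because the crude common rate $e^{2\varepsilon}$ makes the geometric decay of $|v_n|$ and the required decay of $d_n$ cancel and yield no information; and one must check that the perturbation error from the nonlinearity and the error from $\sigma^h$ being only Lipschitz do not close the gap $a-e^\varepsilon$, which is what forces the supplementary smallness requirement on $\delta_1$.
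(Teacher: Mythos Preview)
Your proposal is essentially correct, but both halves of the reverse inclusion are argued differently from the paper, so let me compare.

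For the main reverse inclusion, the paper does not iterate a one-step graph-transform contraction. Instead it fixes the single point $u\in W^u_\delta(x,h)$ with $\pi^u_{x,h}u=\pi^u_{x,h}v$ (so $u-v\in E^{cs}$), takes the \emph{backward} orbit $u_n=(\widetilde h^n_{h^{-n}x})^{-1}u\in W^u_\delta(h^{-n}x,h)$, and applies Corollary~\ref{C43} directly: since $|u-v|_x=|\pi^{cs}(u-v)|_x$, part~(2) gives $|u-v|_x\le e^{2n\varepsilon}|v_n-u_n|_{-n}$; since both $v_n$ and $u_n$ lie in the $u$-cone, part~(1) gives $|v|_x\ge(e^{4\varepsilon}-\delta_1)^n|v_n|_{-n}$ and likewise for $u$. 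A short ratio computation then forces
\[
1\ \ge\ \Bigl(\frac{e^{4\varepsilon}-\delta_1}{e^{2\varepsilon}}\Bigr)^n\frac{|u-v|_x}{|v|_x}-\frac{|u|_x}{|v|_x}\ \longrightarrow\ \infty,
\]
a contradiction. This uses only the packaged estimates of Corollary~\ref{C43} and needs no smallness on $\delta_1$ beyond \eqref{eq:delta}; by contrast your one-step estimate $d_{n-1}\le(e^\varepsilon+c\,\delta_1)d_n$ is correct but, as you note, requires a slightly sharper constraint on $\delta_1$ than \eqref{eq:delta} guarantees. That is harmless in principle (one may shrink $\delta_1$), but it is an extra hypothesis the paper avoids.

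For the ``moreover'' part the paper again compares $v$ with the same $u$: when $E^c=\{0\}$ one has $|u-v|_x=|\pi^s(u-v)|_x$, so Remark~\ref{R1} gives the sharper bound $|u-v|_x\le e^{-2n\varepsilon}|u_n-v_n|_{-n}\le e^{-2n\varepsilon}\cdot 2\delta_1\ell(h^{-n}x)^{-1}$; combining with $\ell(h^{-n}x)^{-1}\le e^{n\varepsilon}\ell(x)^{-1}$ forces $2\varepsilon\le\varepsilon$, a contradiction. Your route---show that the cone condition $|v_n|=|\pi^u v_n|$ is automatic and then invoke the first formula---is different and arguably cleaner: it reduces the second statement to the first rather than re-running the contradiction with sharper rates.

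In summary: your argument works, with the caveat that your graph-transform iteration needs a marginally smaller $\delta_1$ than the paper assumes. The paper's proof is more economical for the first part (it leans entirely on Corollary~\ref{C43}), while your reduction for the ``moreover'' part is a nice alternative.
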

	\begin{proof}
		To simplify the notations, for $\tau=u,cs$ write $\pi^{\tau}_{h^{-k}x,h}$, $|\cdot|_{h^{-k}x}$ as
	$\pi^{\tau}_{-k}$, $|\cdot|_{-k}$ respectively, and let
    \begin{align*}
		R:=\exp_{x}\Big\{v\in \widetilde{B}_{x}(\delta \ell(x)^{-1}): \ &\forall n \in \N, \exists v_n \in \widetilde{B}_{h^{-n}x}(\delta_1 \ell(h^{-n}x)^{-1}) \ \text{such that} \\
		\widetilde{h}^{n}_{h^{-n}x}v_n=v \ &\text{and} \  |\pi_{h^{-n}(x),h}^{u} v_n|_{h^{-n}(x)}=|v_n|_{h^{-n}(x)}\Big\}
	\end{align*}

		For each $y\in w_{\delta}^{u}(x,h)$, let $v=\exp^{-1}_x y\in W_{\delta}^{u}(x,h)$. By Theorem \ref{Unstable},  for every $n\geq 0$ there exists a $v_n\in W_{\delta,h^{-n}x}^{u}(h^{-n}x,h)$ such that $\widetilde{h}^{n}_{h^{-n}x}v_n=v$. Moreover, one can show that  $|\pi_{-n}^{u} v_n|_{-n}=|v_n|_{-n}$ since $\Lip\sigma^{h}_{h^{-n}x}\leq 1/10$. Thus, $y\in R$.
		
		On the other hand, take $v\in \widetilde{B}_{x}(\delta \ell(x)^{-1})$, assume that there exist $\{v_n\}_{n\geq 0}$ such that
		$$v_n \in \widetilde{B}_{h^{-n}x}(\delta_1 \ell(h^{-n}x)^{-1}), \ \widetilde{h}^{n}_{h^{-n}x}v_n=v \ \text{and} \  |\pi_{-n}^{u} v_n|_{-n}=|v_n|_{-n}. $$
		If $v\notin  W_{\delta}^{u}(x,h)$, then there exists  $u\in  W_{\delta}^{u}(x,h)$ such that $\pi^{u}_{x,h}v=\pi^{u}_{x,h}u$ but $\pi^{cs}_{x,h}v\neq\pi^{cs}_{x,h}u$. For each $n>0$, let $u_n=(\widetilde{h}^{n}_{h^{-n}x})^{-1}u \in W^{u}_{\delta}(h^{-n}x, h).$
		Note that
		\begin{align*}
		1\geq \dfrac{|\pi_{-n}^{cs} v_n|_{-n}}{|\pi_{-n}^{u} v_n|_{-n}}\geq \dfrac{|\pi_{-n}^{cs} (v_n-u_n)|_{-n}- |u_n|_{-n}}{|v_n|_{-n}},
		\end{align*}
        $|u-v|_{x}=|\pi^{cs}_{x,h}(u-v)|_{x}$, $|\pi_{-n}^{u} v_n|_{-n}=|v_n|_{-n}$ and
		$|\pi_{-n}^{u} u_n|_{-n}=|u_n|_{-n}$.
		By (2) of Corollary \ref{C43}, we have that
		\begin{align*}
		|u-v|_x \leq e^{2n\varepsilon} |\pi_{-n}^{cs} (v_n-u_n)|_{-n}=e^{2n\varepsilon} |v_n-u_n|_{-n}.
		\end{align*}
		Similarly,  by (1) of Corollary \ref{C43},
		\begin{align*}
		|v|_{x}\geq (e^{4\varepsilon}-\delta_1)^n | v_n|_{-n} \ \text{and} \
		|u|_{x}\geq (e^{4\varepsilon}-\delta_1)^n | u_n|_{-n}.
		\end{align*}
		The above observation yield that
		\begin{align*}
		1&\geq \dfrac{e^{-2n\varepsilon}|u-v|_x-(e^{4\varepsilon}-\delta_1)^{-n}|u|_{x}}{(e^{4\varepsilon}-\delta_1)^{-n}|v|_{x}}\\
		&\geq (\dfrac{e^{4\varepsilon}-\delta_1}{e^{2\varepsilon}})^{n} \dfrac{|u-v|_x}{|v|_{x}}-\frac{|u|_{x}}{|v|_{x}}.
		\end{align*}
		Recall the choice of $\delta_1$ in (\ref{eq:delta}), the right-hand side in the above inequality tends to $\infty$ as $n\rightarrow \infty$. This  contradiction yields that
		 $v\in W_{\delta}^{u}(x,h)$. Hence, we have that $\exp_x v\in w_{\delta}^{u}(x,h).$
		
		To finish the proof, we assume $E^{c}(x,h)=\{0\}$. By Remark \ref{R1}, one has
		$$|u-v|_{x}\leq e^{-2n\varepsilon} |u_n-v_n|_{-n}.$$
		Since $|v_n|_{-n}\leq \delta_1 \ell(h^{-n}x)^{-1}$ and $|u_n|_{-n}\leq \delta_1 \ell(h^{-n}x)^{-1}$, we have
		$$|v_n-u_n|_{-n}\leq 2\delta_1 \ell(h^{-n}x)^{-1}.$$
		Thus, one has that
		$$\frac{1}{n}\log e^{2n\varepsilon}|u-v|_x\leq \frac{1}{n}\log (2\delta_1 \ell(h^{-n}x)^{-1}).$$
		Letting $n\rightarrow \infty$, we have that
		$$2\varepsilon<\limsup_{n\rightarrow \infty} \frac{1}{n}\log \ell(h^{-n}x)^{-1}\leq\limsup_{n\rightarrow \infty} \frac{1}{n}\log (e^{n\varepsilon}\ell(x)^{-1})=\varepsilon.$$
		This contradiction implies that $v\in W^{u}_{\delta}(x,h)$. This completes the proof of the lemma.
	\end{proof}

    \section{\textbf{Proof of Theorem \ref{B}}}\label{S7}
     In this section, the main aim is to  give  conditions under which the inequality in Theorem \ref{A} becomes an equality. We first show the local unstable manifolds of the two commuting maps are equal under some suitable conditions. Moreover,  if the center subspaces of the two commuting maps are trivial, the equality of the metric entropies is established.

	\subsection{Proof of the first statement}This section will prove the first statement of Theorem \ref{B}, i.e.,  if $E^{u}(x,f)=E^{u}(x,g)$ for $\mu$-almost every $x$,
	then $w_{\delta}^{u}(x,f)=w_{\delta}^{u}(x,g)$ for  $\mu$-almost every $x$ provided that $\delta$ is sufficiently  small. Without loss of generality,  assume that $E^{u}(x,f)=E^{u}(x,g)$ for every $x\in \Gamma$. By \eqref{eq:split}, one also has that  $E^{cs}(x,f)=E^{cs}(x,g)$ for every $x\in \Gamma$.
	
	Recall the following well-known result about graph transformations. Let $0<\delta<{\delta}'_1$, $x\in\Gamma$ and
	$$\cW(x,h)=\Big\{\sigma:\widetilde{B}^{u}_{x}(\delta \ell(x)^{-1},h)\rightarrow \widetilde{B}^{cs}_{x}(\delta \ell(x)^{-1},h) \, | \,  \sigma(0)=0, \ \Lip \sigma \leq \frac{1}{10}\Big\}.$$
	where $h=f$ or $g$.
    For $\sigma \in \cW(x,h)$, define a map
	$\Psi_{x,h}(\sigma): \widetilde{B}^{u}_{hx}(\delta \ell(hx)^{-1},h)\rightarrow \widetilde{B}^{cs}_{hx}(\delta \ell(hx)^{-1},h)$  with the following property
	$$\widetilde{h}_{x}(\graph(\sigma))\supset \graph(\Psi_{x,h}(\sigma)).$$
    The map $\Psi_{x,h}(\sigma)$ is called the graph transform of $\sigma$ with respect to $h$, and it is well-defined in the following sense.

	\begin{lemma}[\cite{Young17} Lemma 6.2] \label{L62}
		Let $h=f$ or $g$ and $x\in \Gamma$, then the following properties hold:
		\begin{enumerate}
	    \item[(1)] for every $\sigma\in \cW(x,h)$,  $\Psi_{x,h}(\sigma)$ exists and belongs to $\cW(hx,h)$;
	    \item[(2)] there exists a constant $c\in (0,1)$ such that
	    $$|||\Psi_{x,h}(\sigma_1)-\Psi_{x,h}(\sigma_2) |||_{hx}\leq c|||\sigma_1-\sigma_2|||_{x}\quad \forall\,\sigma_1, \sigma_2 \in \cW(x,h) $$
	    where
	    $\displaystyle{|||\sigma|||_x=\sup_{v\in \widetilde{B}^{u}_{x}(\delta \ell(x)^{-1},h)\setminus \{0\}} \dfrac{|\sigma(v)|_{x}}{|v|_{x}} }$.
	    \end{enumerate}
	\end{lemma}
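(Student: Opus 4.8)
The plan is to run the classical graph–transform argument, but carried out with respect to the Lyapunov norms $\{|\cdot|_x\}$, for which Corollaries~\ref{C41}--\ref{C43} supply the hyperbolic estimates and Lemma~\ref{L43} the $C^{1+\mathrm{Lip}}$ regularity that replace the standard hyperbolicity used in the finite--dimensional case. Fix $h\in\{f,g\}$, $x\in\Gamma$ and $0<\delta<\delta_1'$, abbreviate $\pi^\tau:=\pi^\tau_{x,h}$ and $\pi^\tau_h:=\pi^\tau_{h(x),h}$ for $\tau=u,c,s,cs$, and for $\sigma\in\cW(x,h)$ write $G_\sigma(u)=u+\sigma(u)$ for the associated graph map. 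Since $\Lip\sigma\le\frac1{10}$ and $\sigma(0)=0$, $G_\sigma$ sends $\widetilde{B}^u_x(\delta\ell(x)^{-1},h)$ into $\widetilde{B}_x(\tfrac{11}{10}\delta\ell(x)^{-1})$, which is contained in $\widetilde{B}_x(\delta_1\ell(x)^{-1})$ after a harmless shrinking of $\delta$ (absorbed in $\delta_1'$), so Lemma~\ref{L43} applies along the graph. The transformed graph will be produced through the base--point map
\[
\hat u_\sigma:=\pi^u_h\circ\widetilde h_x\circ G_\sigma:\ \widetilde{B}^u_x(\delta\ell(x)^{-1},h)\longrightarrow E^u(h(x),h).
\]

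I would prove part (1) in three steps. First, $\hat u_\sigma$ is a bi--Lipschitz homeomorphism onto its image, which contains $\widetilde{B}^u_{h(x)}(\delta\ell(h(x))^{-1},h)$. Indeed, writing $\widetilde h_x=Dh_x+R_x$ with $\Lip R_x\le\delta$ by Lemma~\ref{L43}, and using that $Dh_x E^u(x,h)=E^u(h(x),h)$ and $Dh_x E^{cs}(x,h)\subset E^{cs}(h(x),h)$ (Theorem~\ref{MET fg}), one gets $\hat u_\sigma=L+N$ where $L:=Dh_x|_{E^u(x,h)}$ is a linear isomorphism with $|L^{-1}|\le e^{-(\lambda^+(h)-\varepsilon)}\le e^{-4\varepsilon}$ (Corollary~\ref{C41}, \eqref{eq:41}), and $N(u):=\pi^u_h R_x(G_\sigma(u))$ satisfies $N(0)=0$ and $\Lip N\le\tfrac{11}{10}\delta$. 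Since $\tfrac{11}{10}\delta_1$ is small relative to $e^{4\varepsilon}$ (choices \eqref{eq:vp}, \eqref{eq:delta}), the Lipschitz inverse function theorem applies; the covering claim follows because the image of the ball of radius $\delta\ell(x)^{-1}$ contains the ball of radius $(e^{4\varepsilon}-\tfrac{11}{10}\delta)\,\delta\ell(x)^{-1}\ge e^{\varepsilon}\delta\ell(x)^{-1}\ge\delta\ell(h(x))^{-1}$, where the last bound uses the $\varepsilon$--slowly property $\ell(h(x))\ge e^{-\varepsilon}\ell(x)$. One then defines $\Psi_{x,h}(\sigma):=\pi^{cs}_h\circ\widetilde h_x\circ G_\sigma\circ\hat u_\sigma^{-1}$ on $\widetilde{B}^u_{h(x)}(\delta\ell(h(x))^{-1},h)$, so that $\widetilde h_x(\graph\sigma)\supset\graph(\Psi_{x,h}(\sigma))$ by construction and $\Psi_{x,h}(\sigma)(0)=0$ because $\widetilde h_x(0)=0$, $\sigma(0)=0$, $\hat u_\sigma(0)=0$. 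Second, $\Lip\Psi_{x,h}(\sigma)\le\frac1{10}$ is a cone--shrinking estimate: for $p_i=G_\sigma(u_i)$ one has $|p_1-p_2|_x=|\pi^u(p_1-p_2)|_x$ (since the $cs$--part is $\le\frac1{10}$ of the $u$--part, and $|\cdot|_x$ is a max over the $V_\kappa$ and $E_\alpha$ components), and then, decomposing $\widetilde h_x=Dh_x+R_x$ and using Corollary~\ref{C41} ($u$--directions expand by $\ge e^{4\varepsilon}$, $cs$--directions grow by $\le e^{\varepsilon}$) together with $\Lip R_x\le\delta$, the ratio of $cs$-- to $u$--norm of $\widetilde h_x p_1-\widetilde h_x p_2$ is at most $\dfrac{e^{\varepsilon}/10+\delta}{e^{4\varepsilon}-\delta}\le\dfrac1{10}$, the last inequality being exactly $11\delta\le e^{4\varepsilon}-e^{\varepsilon}$, which holds because $\delta_1$ was chosen small relative to $\varepsilon$ (compare \eqref{eq:delta}). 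Third, a direct size estimate: for $v$ in the domain one bounds $|\hat u_\sigma^{-1}(v)|_x$ in terms of $|v|_{h(x)}$ via the expansion of $\hat u_\sigma$, hence $|G_\sigma(\hat u_\sigma^{-1}(v))|_x\le\tfrac{11}{10}\delta\ell(x)^{-1}$, and then $|\Psi_{x,h}(\sigma)(v)|_{h(x)}\le(e^{\varepsilon}+\delta)\cdot\tfrac{11}{10}\delta\ell(x)^{-1}\le\delta\ell(h(x))^{-1}$ after a final shrinking of $\delta$ inside $\delta_1'$. Together these give $\Psi_{x,h}(\sigma)\in\cW(h(x),h)$.

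For part (2), fix $v\in\widetilde{B}^u_{h(x)}(\delta\ell(h(x))^{-1},h)$ and $\sigma_1,\sigma_2\in\cW(x,h)$, and set $u_i:=\hat u_{\sigma_i}^{-1}(v)$, $p_i:=u_i+\sigma_i(u_i)$. From the identity $\pi^u_h\widetilde h_x(p_1)=v=\pi^u_h\widetilde h_x(p_2)$, subtracting and using that $\widetilde h_x$ expands the $u$--direction along cone--admissible displacements by at least $e^{4\varepsilon}-\delta$ (the computation of step (1) above, equivalently Theorem~\ref{Unstable}(3)), while it is $\delta$--Lipschitz away from $Dh_x$ and $Dh_x$ grows $cs$ by $\le e^{\varepsilon}$, one obtains a bound $|u_1-u_2|_x\le c_0\,|\sigma_1(u_2)-\sigma_2(u_2)|_x\le c_0\,|||\sigma_1-\sigma_2|||_x\,|u_2|_x$ for a constant $c_0$ depending only on $\lambda^+(h),\varepsilon,\delta_1$, and hence $|p_1-p_2|_x\le c_1\,|||\sigma_1-\sigma_2|||_x\,|v|_{h(x)}$ with $c_1$ of the same type (the bounded factors $\ell(h(x))/\ell(x)\in[e^{-\varepsilon},e^{\varepsilon}]$ are absorbed). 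Finally
\[
\Psi_{x,h}(\sigma_1)(v)-\Psi_{x,h}(\sigma_2)(v)=\pi^{cs}_h\bigl(\widetilde h_x p_1-\widetilde h_x p_2\bigr),
\]
which by Corollary~\ref{C41} and Lemma~\ref{L43} is $\le(e^{\varepsilon}+\delta)\,|p_1-p_2|_x\le(e^{\varepsilon}+\delta)\,c_1\,|||\sigma_1-\sigma_2|||_x\,|v|_{h(x)}$; dividing by $|v|_{h(x)}$ and taking the supremum over $v$ gives $|||\Psi_{x,h}(\sigma_1)-\Psi_{x,h}(\sigma_2)|||_{h(x)}\le c\,|||\sigma_1-\sigma_2|||_x$, and $c<1$ follows once $\varepsilon$ and $\delta_1$ are taken so that $e^{\varepsilon}+\delta_1<e^{2\varepsilon}$ while the combined expansion--to--contraction gap beats $c_1$, which is the content of \eqref{eq:vp} and \eqref{eq:delta}; crucially $c$ depends only on $\lambda^+(h),\varepsilon,\delta_1$, not on $x$.

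The main obstacle is not the hyperbolic content — which is routine once Corollaries~\ref{C41}--\ref{C43} and Lemma~\ref{L43} are available — but the bookkeeping with the point--dependent norms: one must check that the successive small shrinkings of $\delta$ needed to control the factor $\tfrac{11}{10}$ coming from $\Lip\sigma\le\tfrac1{10}$, the slowly--varying factors $\ell(h^{\pm1}x)/\ell(x)\le e^{\varepsilon}$, and the inverse--function/covering step are all uniform in $x$, and that the contraction constant $c$ in (2) is likewise $x$--independent. The only genuinely infinite--dimensional point, namely the invertibility of the base--point map $\hat u_\sigma$, is painless here because $E^u(x,h)$ is finite dimensional, so the Lipschitz inverse function theorem (or a degree argument) applies directly.
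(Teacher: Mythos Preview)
The paper does not prove this lemma: it is quoted verbatim from \cite{Young17} (their Lemma~6.2) and used as a black box, so there is no in-paper argument to compare against. Your proposal is the standard graph--transform proof carried out in the adapted Lyapunov norms, which is precisely the content of the cited reference; the outline you give for part~(1) (Lipschitz inverse function theorem on the base map $\hat u_\sigma$, covering via the slowly--varying bound $\ell(h(x))\ge e^{-\varepsilon}\ell(x)$, and the cone contraction $\frac{e^{\varepsilon}/10+\delta}{e^{4\varepsilon}-\delta}\le\frac{1}{10}$) is correct and matches the reference. For part~(2) your sketch is on the right track but the closing sentence (``$c<1$ follows once $\varepsilon$ and $\delta_1$ are taken so that $\ldots$ which is the content of \eqref{eq:vp} and \eqref{eq:delta}'') sweeps the actual estimate under the rug: if you track the constants honestly you get $c\approx\frac{e^{\varepsilon}+\delta}{e^{\lambda^+(h)-\varepsilon}-\tfrac{11}{10}\delta}\cdot(1+O(\delta))$, and it is the gap $\lambda^+(h)-\varepsilon>\varepsilon$ (in fact $\lambda^+(h)>10K_0\varepsilon$ from \eqref{eq:vp}) that forces $c<1$ uniformly in $x$, not merely \eqref{eq:delta}. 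With that correction the proposal is complete and coincides with the argument of \cite{Young17}.
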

	
    Recall the choice of $\lambda$ in \eqref{eq:Expanding} and ${\delta}'_1$ in Theorem \ref{Unstable}, we have the following result.
	\begin{lemma}\label{L63}
		 Let ${\delta}'_2=e^{-\lambda-\varepsilon}{\delta}'_1$. If $E^{u}(x,f)=E^{u}(x,g)$ for every $x\in \Gamma$, then for every $0<\delta<{\delta}'_2$
		we have that
		$$\widetilde{f}_x(W_{\delta}^{u}(x,g))\supset W_{\delta}^{u}(fx,g), \    \widetilde{g}_x(W_{\delta}^{u}(x,f))\supset   W_{\delta}^{u}(gx,f) \quad \forall\, x\in \Gamma.$$
	\end{lemma}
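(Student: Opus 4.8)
The plan is to translate the two inclusions into a single statement about graph transforms — namely $\Psi_{x,f}(\sigma^{g}_{x})=\sigma^{g}_{fx}$ for every $x\in\Gamma$, together with its companion obtained by interchanging $f$ and $g$ — and then to prove this by uniqueness of the unstable manifold. First I would record the consequences of the hypothesis beyond what is already noted: we have $E^{cs}(x,f)=E^{cs}(x,g)$ for every $x\in\Gamma$, and since the Lyapunov norm $|\cdot|_{x}$ is attached to the point $x$ and not to the transformation, this yields $\widetilde B^{u}_{x}(r,f)=\widetilde B^{u}_{x}(r,g)$, $\widetilde B^{cs}_{x}(r,f)=\widetilde B^{cs}_{x}(r,g)$, $\pi^{u}_{x,f}=\pi^{u}_{x,g}$ and $\cW(x,f)=\cW(x,g)=:\cW(x)$. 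In particular $\sigma^{g}_{x}\in\cW(x,g)=\cW(x,f)$, so $\Psi_{x,f}(\sigma^{g}_{x})$ is a well-defined element of $\cW(fx)$ by Lemma \ref{L62}, and since $\widetilde f_{x}(\graph\sigma^{g}_{x})\supset\graph(\Psi_{x,f}(\sigma^{g}_{x}))$ by definition of the graph transform, the identity $\Psi_{x,f}(\sigma^{g}_{x})=\sigma^{g}_{fx}$ is exactly $\widetilde f_{x}(W^{u}_{\delta}(x,g))\supset W^{u}_{\delta}(fx,g)$.

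The core of the argument will be a commutation property of the graph transforms, $\Psi_{fy,g}\circ\Psi_{y,f}=\Psi_{gy,f}\circ\Psi_{y,g}$ on $\cW(y)$, which I would deduce from the assertion that $\widetilde{(fg)}_{y}(\graph\sigma)$ contains the graph of at most one map in $\cW(fgy)$. To prove the latter, suppose $\graph\phi_{1},\graph\phi_{2}\subset\widetilde{(fg)}_{y}(\graph\sigma)$ with $\phi_{1},\phi_{2}\in\cW(fgy)$, fix $u$ in their common domain, and choose $p_{i}\in\graph\sigma$ with $\widetilde{(fg)}_{y}p_{i}=(u,\phi_{i}(u))$. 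Since $\Lip\sigma\le 1/10$ one has $|p_{1}-p_{2}|_{y}=|\pi^{u}_{y,f}(p_{1}-p_{2})|_{y}$; writing $\widetilde{(fg)}_{y}=\widetilde g_{fy}\circ\widetilde f_{y}$ (valid because $f$ and $g$ commute) and applying Corollaries \ref{C42} and \ref{C43}(1) first to $\widetilde f_{y}$ and then to $\widetilde g_{fy}$ — using that the common unstable direction is expanded by each of $\widetilde f$ and $\widetilde g$, and that the iterates $p_{i}$, $\widetilde f_{y}p_{i}$, $\widetilde{(fg)}_{y}p_{i}$ all lie in the relevant admissible balls thanks to $0<\delta<\delta_{2}'=e^{-\lambda-\varepsilon}\delta_{1}'$ and \eqref{exp-1step} — I would get $|\widetilde{(fg)}_{y}p_{1}-\widetilde{(fg)}_{y}p_{2}|_{fgy}\ge e^{4\varepsilon}|p_{1}-p_{2}|_{y}$. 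On the other hand $\widetilde{(fg)}_{y}p_{1}-\widetilde{(fg)}_{y}p_{2}=(0,\phi_{1}(u)-\phi_{2}(u))$ lies in $E^{cs}(fgy,g)$ and hence has vanishing $E^{u}$-component, so the left-hand side is $0$; thus $p_{1}=p_{2}$ and $\phi_{1}(u)=\phi_{2}(u)$, whence $\phi_{1}=\phi_{2}$. The commutation then follows, since both $\Psi_{fy,g}(\Psi_{y,f}(\sigma))$ and $\Psi_{gy,f}(\Psi_{y,g}(\sigma))$ belong to $\cW(fgy)$ and have graphs inside $\widetilde g_{fy}(\widetilde f_{y}(\graph\sigma))=\widetilde{(fg)}_{y}(\graph\sigma)=\widetilde f_{gy}(\widetilde g_{y}(\graph\sigma))$.

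To finish, I would set $\tau_{x}:=\Psi_{f^{-1}x,f}(\sigma^{g}_{f^{-1}x})\in\cW(x,f)=\cW(x,g)$ (using Lemma \ref{L62} and the invariance of $\Gamma$), take $y=f^{-1}x$, and combine $\Psi_{y,g}(\sigma^{g}_{y})=\sigma^{g}_{gy}$ (which is the defining property of $\sigma^{g}$ in Theorem \ref{Unstable}) with the commutation to obtain
\[
\Psi_{x,g}(\tau_{x})=\Psi_{fy,g}(\Psi_{y,f}(\sigma^{g}_{y}))=\Psi_{gy,f}(\Psi_{y,g}(\sigma^{g}_{y}))=\Psi_{gy,f}(\sigma^{g}_{gy})=\tau_{fgy}=\tau_{gx},
\]
since $fgy=gx$. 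Thus $\{\tau_{x}\}_{x\in\Gamma}$ is a family of continuous maps with $\tau_{x}(0)=0$ and $\widetilde g_{x}(\graph\tau_{x})\supset\graph\tau_{gx}$, so the uniqueness clause of Theorem \ref{Unstable} for $g$ forces $\tau_{x}=\sigma^{g}_{x}$ for all $x\in\Gamma$; replacing $x$ by $fx$ gives $\Psi_{x,f}(\sigma^{g}_{x})=\sigma^{g}_{fx}$, i.e.\ $\widetilde f_{x}(W^{u}_{\delta}(x,g))\supset W^{u}_{\delta}(fx,g)$, and running the symmetric argument with $f$ and $g$ interchanged gives $\widetilde g_{x}(W^{u}_{\delta}(x,f))\supset W^{u}_{\delta}(gx,f)$. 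The hardest part will be the ``at most one admissible graph in $\widetilde{(fg)}_{y}(\graph\sigma)$'' step: it is precisely there that the commuting structure must be coupled with the two-sided expansion of the shared unstable subspace, and one must carefully track the various admissible ball sizes through the two successive pushforwards by $\widetilde f$ and $\widetilde g$.
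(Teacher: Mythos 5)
Your argument is correct, and it takes a genuinely different route from the paper's. The paper argues directly: it lets $\tilde\sigma_{fx}=\Psi_{x,f}(\sigma^g_x)$, takes an arbitrary $u\in\graph\tilde\sigma_{fx}$ with preimage $u_0'\in W^u_\delta(x,g)$, pulls $u_0'$ backward along the $g$-orbit to get $u_n'\in W^u_\delta(g^{-n}x,g)$, pushes each $u_n'$ forward one step by $\widetilde f$ to obtain $u_n:=\widetilde f_{g^{-n}x}u_n'$, verifies (via \eqref{exp-1step} and Corollary~\ref{C43}) that $u_n$ stays in the right admissible ball with $\pi^u$-maximizing norm, and then applies the backward-orbit characterization in Lemma~\ref{L61} to conclude $u\in W^u_\delta(fx,g)$. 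You instead prove a commutation identity $\Psi_{fy,g}\circ\Psi_{y,f}=\Psi_{gy,f}\circ\Psi_{y,g}$ by a one-step hyperbolicity/uniqueness argument for the pushed graph under $\widetilde{(fg)}_y$, then observe that $\tau_x:=\Psi_{f^{-1}x,f}(\sigma^g_{f^{-1}x})$ defines a $\widetilde g$-invariant family in $\cW(\cdot)$ passing through $0$, so the uniqueness clause of Theorem~\ref{Unstable} forces $\tau_x=\sigma^g_x$. Both routes ultimately exploit the same facts — the shared unstable/center-stable splitting, the one-step expansion from Corollaries~\ref{C42}--\ref{C43}, the ball inclusion \eqref{exp-1step}, and commutativity of $f,g$ — but the paper packages the ``infinite backward check'' once and for all in Lemma~\ref{L61} and so avoids re-deriving any graph-transform uniqueness, while your version is more algebraic and makes the commuting structure of the two graph transforms explicit. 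A small presentational point: in the step ``so the left-hand side is $0$,'' what you actually need to invoke is the \emph{equality} part of Corollary~\ref{C42} (that $|\widetilde{(fg)}_yp_1-\widetilde{(fg)}_yp_2|_{fgy}$ equals the norm of its $E^u$-component), since the difference $(0,\phi_1(u)-\phi_2(u))$ has zero $E^u$-component but a priori a nonzero $E^{cs}$-component; once you make that explicit, either the norm vanishes directly, or equivalently the expansion inequality forces $p_1=p_2$. Worth also noting that the uniqueness of the graph transform (used implicitly when you write $\Psi_{y,g}(\sigma^g_y)=\sigma^g_{gy}$) is exactly the single-map version of the ``at most one admissible graph'' claim you prove for the composition, so your argument is self-contained in this respect.
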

	\begin{proof}
		Fix $x\in \Gamma$, since $E^{u}(x,f)=E^{u}(x,g)$, we have that $E^{cs}(x,f)=E^{cs}(x,g)$ by \eqref{eq:split}. Hence, if there is no confusion caused, we will simply write $E^{\tau}(x,h)$ as $E^{\tau}(x)$ for $\tau=u,cs$ and $h=f,g$. This also implies that $\cW(x,f)=\cW(x,g)$, and  we denote the common set by $\cW(x)$.
		
		In the following, we shall show that
		$\widetilde{f}_x(W_{\delta}^{u}(x,g))\supset W_{\delta}^{u}(fx,g) $, the other one can be proven in a similar fashion.
		
		Given $\sigma_{x}^{g}\in \cW(x)$  with $\graph(\sigma_{x}^{g})=W_{\delta}^{u}(x,g)$ as in Theorem \ref{Unstable}. By Lemma \ref{L62},
		there exists  $\tilde{\sigma}_{fx}\in  \cW(fx)$ such that
		\begin{equation}\label{eq:63}
		\widetilde{f}_x W_{\delta}^{u}(x,g)\supset \graph(\tilde{\sigma}_{fx}).
		\end{equation}
		It suffices to prove that $\tilde{\sigma}_{fx}=\sigma^{g}_{fx}$. For every $v \in  \tilde{B}^{u}_{fx}(\delta \ell(fx)^{-1})$, let $u:=v+\tilde{\sigma}_{fx} v\in  \graph(\tilde{\sigma}_{fx}).$ By (\ref{eq:63}), there exists  ${u}_0' \in W_{\delta}^{u}(x,g)$ such that  $\widetilde{f}_x ({u}'_0)=u$. Since ${u}'_0 \in W_{\delta}^{u}(x,g)$,
		 there exists  ${u}'_n\in W_{\delta}^{u}(g^{-n}x,g)$ such that $\widetilde{g}^n_{g^{-n}x} ({u}'_n)={u}'_0$ and  $|\pi_{g^{-n}(x),f}^{u} {u}'_n|_{g^{-n}(x)}=|{u}'_n|_{g^{-n}(x)}$  for every $n\geq 0$ . Let $u_{n}=\widetilde{f}_{g^{-n}x} {u}'_n$, since ${u}'_n\in \tilde{B}_{g^{-n}x}(\delta \ell(g^{-n}x)^{-1})$,
		by \eqref{exp-1step} and (1) of Corollary \ref{C43}   we have that
		$$u_{n}\in \tilde{B}_{g^{-n}(fx)}(\delta_1 \ell(g^{-n}fx)^{-1}),\ |\pi_{g^{-n}(fx),f}^{u} u_n|_{g^{-n}(fx)}=|u_n|_{g^{-n}(fx)}.$$
		By construction of $u_n$, we have that $\widetilde{g}^n_{g^{-n}(fx)} u_n=u$ for every $n\geq 1$. It follows from Lemma \ref{L61} that
		$u\in W_{\delta}^{u}(fx,g).$
		This yields  that $\tilde{\sigma}_{fx}= \sigma^{g}_{fx}$. This completes the proof of the lemma.
	\end{proof}
	Now we will prove the first statement of Theorem \ref{B}, we write it as the following theorem.
	\begin{theorem}\label{T6}
		Suppose $E^{u}(x,f)=E^{u}(x,g)$ for every $x\in \Gamma$. For every $0<\delta<{\delta}'_2$ ( $ {\delta}'_2 $ is the same as in Lemma \ref{L63}),
		we have that $w^{u}_{\delta}(x,f)=w^{u}_{\delta}(x,g)$ for every $x\in \Gamma$.
	\end{theorem}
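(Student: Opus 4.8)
The plan is to identify the family $\{\sigma^{g}_{x}\}_{x\in\Gamma}$ of $g$-unstable graphs with the family $\{\sigma^{f}_{x}\}_{x\in\Gamma}$ of $f$-unstable graphs by invoking the uniqueness clause of Theorem \ref{Unstable}, using Lemma \ref{L63} to supply the $\widetilde f$-invariance that this requires. First I would record what the hypothesis $E^{u}(x,f)=E^{u}(x,g)$ buys us. Since the splitting \eqref{eq:split} is common to $f$ and $g$, this identity forces $E^{cs}(x,f)=E^{cs}(x,g)$ as well, hence $\widetilde{B}^{u}_{x}(r,f)=\widetilde{B}^{u}_{x}(r,g)$ and $\widetilde{B}^{cs}_{x}(r,f)=\widetilde{B}^{cs}_{x}(r,g)$ for all $r>0$, and $\cW(x,f)=\cW(x,g)$; moreover the function $\ell(\cdot)$ and the radius $\delta'_{1}$ do not refer to $f$ or $g$ separately. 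In particular, for $0<\delta<{\delta}'_{2}=e^{-\lambda-\varepsilon}{\delta}'_{1}<{\delta}'_{1}$ both instances ($h=f$ and $h=g$) of Theorem \ref{Unstable} produce families of continuous maps having the same domains $\widetilde{B}^{u}_{x}(\delta\ell(x)^{-1})$, the same target balls $\widetilde{B}^{cs}_{x}(\delta\ell(x)^{-1})$, and the same constraint class $\cW(x)$.

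Next I would take the family $\{\sigma^{g}_{x}\}_{x\in\Gamma}$ produced by Theorem \ref{Unstable} applied to $g$; each $\sigma^{g}_{x}$ is continuous, $\sigma^{g}_{x}(0)=0$, $\Lip\sigma^{g}_{x}\le 1/10$, and $W_{\delta}^{u}(x,g)=\graph(\sigma^{g}_{x})$. Since $0<\delta<{\delta}'_{2}$, Lemma \ref{L63} gives
$$\widetilde{f}_{x}(\graph(\sigma^{g}_{x}))=\widetilde{f}_{x}(W_{\delta}^{u}(x,g))\supset W_{\delta}^{u}(fx,g)=\graph(\sigma^{g}_{fx})\qquad\text{for every }x\in\Gamma .$$
Thus $\{\sigma^{g}_{x}\}_{x\in\Gamma}$ is a family of continuous maps satisfying the two defining properties $\sigma^{g}_{x}(0)=0$ and $\widetilde{f}_{x}(\graph(\sigma^{g}_{x}))\supset\graph(\sigma^{g}_{fx})$ of the $f$-unstable family in Theorem \ref{Unstable}. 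By the uniqueness assertion there, applied to $f$ --- which ultimately rests on the uniform fibrewise contraction of the graph transform $\Psi_{x,f}$ furnished by Lemma \ref{L62}(2) --- one concludes $\sigma^{g}_{x}=\sigma^{f}_{x}$ for every $x\in\Gamma$. Hence $W_{\delta}^{u}(x,f)=W_{\delta}^{u}(x,g)$, and applying $\exp_{x}$ yields $w_{\delta}^{u}(x,f)=w_{\delta}^{u}(x,g)$ for every $x\in\Gamma$, as claimed.

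An alternative route, which I would mention, bypasses the uniqueness clause and uses instead the characterisation in Lemma \ref{L61}. Iterating Lemma \ref{L63} one gets $W_{\delta}^{u}(x,f)\subset\widetilde{g}^{\,n}_{g^{-n}x}(W_{\delta}^{u}(g^{-n}x,f))$, so each $v\in W_{\delta}^{u}(x,f)$ admits, for every $n$, a preimage $v_{n}\in W_{\delta}^{u}(g^{-n}x,f)\subset\widetilde{B}_{g^{-n}x}(\delta_{1}\ell(g^{-n}x)^{-1})$ with $\widetilde{g}^{\,n}_{g^{-n}x}v_{n}=v$ and, using $\Lip\sigma^{f}_{g^{-n}x}\le 1/10$ together with the $\max$-form of $|\cdot|_{g^{-n}(x)}$, $|\pi_{g^{-n}(x),f}^{u} v_{n}|_{g^{-n}(x)}=|v_{n}|_{g^{-n}(x)}$; since $\pi_{\cdot,f}^{u}=\pi_{\cdot,g}^{u}$ (same $E^{u}$ and $E^{cs}$), this is exactly the membership condition of Lemma \ref{L61} for $g$, so $\exp_{x}v\in w_{\delta}^{u}(x,g)$, i.e.\ $w_{\delta}^{u}(x,f)\subset w_{\delta}^{u}(x,g)$, and the reverse inclusion follows by interchanging $f$ and $g$.

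I do not expect a genuine obstacle here, now that Lemmas \ref{L61}--\ref{L63} are available; the effort in this section sits entirely in those lemmas. The only points requiring care in the proof of Theorem \ref{T6} are bookkeeping: verifying that the $f$- and $g$-versions of all the relevant balls, of the constraint class $\cW(\cdot)$, and of the slowly-varying function $\ell(\cdot)$ genuinely coincide under the assumption $E^{u}(x,f)=E^{u}(x,g)$, and that the chosen range $0<\delta<{\delta}'_{2}$ is small enough for Theorem \ref{Unstable} (which needs $\delta<\delta'_{1}$) and Lemma \ref{L63} (which needs $\delta<\delta'_{2}$) to apply simultaneously.
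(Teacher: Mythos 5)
Your proposal is correct, and in fact contains both the paper's own argument and a genuinely different one. Your ``alternative route'' --- taking $v\in W^{u}_{\delta}(x,f)$, producing the backward $\widetilde g$-iterates $v_n\in W^{u}_{\delta}(g^{-n}x,f)$ via (iterated) Lemma~\ref{L63}, observing that $\Lip\sigma^{f}_{g^{-n}x}\le 1/10$ and the common splitting $E^u=E^u(\cdot,f)=E^u(\cdot,g)$ force $|\pi^u_{g^{-n}x,g}v_n|_{g^{-n}x}=|v_n|_{g^{-n}x}$, and invoking the characterization in Lemma~\ref{L61} for $g$ --- is precisely the proof in the paper, down to the symmetric treatment of the reverse inclusion.

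Your primary route is a shorter, cleaner argument that the paper does not use: since under $E^{u}(x,f)=E^{u}(x,g)$ the target classes $\cW(x,f)=\cW(x,g)$ and the balls $\widetilde{B}^{\tau}_{x}(\cdot)$ coincide, the family $\{\sigma^{g}_x\}_{x\in\Gamma}$ is a continuous family with $\sigma^{g}_x(0)=0$ and, by Lemma~\ref{L63}, $\widetilde{f}_x(\graph\sigma^{g}_x)\supset\graph\sigma^{g}_{fx}$; the uniqueness clause of Theorem~\ref{Unstable} applied to $h=f$ then forces $\sigma^g_x=\sigma^f_x$. This buys you the conclusion in a single stroke once Lemma~\ref{L63} is in hand, at the cost of leaning entirely on the uniqueness assertion of Theorem~\ref{Unstable} as stated (whose proof in \cite{Young17} proceeds through the fibrewise contraction of Lemma~\ref{L62}(2) on the Lipschitz class $\cW$, so one should keep in mind that ``unique continuous family'' there is really ``unique family in $\cW(\cdot)$,'' which is fine here since $\sigma^g_x\in\cW(x)$). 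The paper's route via Lemma~\ref{L61} avoids any appeal to uniqueness and instead verifies membership pointwise, which has the minor advantage of reusing machinery already set up in Section~\ref{S6} and being robust to a possibly imprecise phrasing of the uniqueness clause. Both are valid; the bookkeeping checks you list (coincidence of $\ell(\cdot)$, $\delta'_1$, $\cW(\cdot)$, the balls, and $\delta'_2<\delta'_1$) are exactly the points that need care, and you have them.
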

	\begin{proof}
		Given $x\in\Gamma$ and $u\in W^{u}_{\delta}(x,f)$, it follows from Lemma \ref{L63} that
		 $$\widetilde{g}^{n}_{g^{-n}x}(W_{\delta}^{u}(g^{-n}x,f))\supset W_{\delta}^{u}(x,f)\ \ \forall \, n\geq 0. $$
		Thus, there exists  $u_n\in W^{u}_{\delta}(g^{-n}x,f)$ such that $\widetilde{g}^{n}_{g^{-n}x} u_n=u$ for every $n\geq 0$.  Since $u_n\in W^{u}_{\delta}(g^{-n}x,f)\subset \tilde{B}_{g^{-n}x}(\delta \ell(g^{-n}x)^{-1})$ and $E^{u}(g^{-n}x,f)=E^{u}(g^{-n}x,g)$, we have that
		$$|\pi_{g^{-n}(x),f}^{u} u_n|_{g^{-n}x}=|\pi_{g^{-n}(x),g}^{u} u_n|_{g^{-n}x}=|u_n|_{g^{-n}x}.$$
		It follows from Lemma \ref{L61} that $u\in  W^{u}_{\delta}(x,g).$ Thus, we have $W^{u}_{\delta}(x,f) \subset  W^{u}_{\delta}(x,g).$
		
		Similarly, one can show that $W^{u}_{\delta}(x,g) \subset  W^{u}_{\delta}(x,f).$ Therefore, $W^{u}_{\delta}(x,g)=  W^{u}_{\delta}(x,f)$. So we conclude that  $ w^{u}_{\delta}(x,f)=w^{u}_{\delta}(x,g) $ for every $x\in \Gamma$.
	\end{proof}
	
	\subsection{Proof of the second statement}\label{tc} In this section, we will show the equality of the measure theoretic entropies under the additional condition {that the center subspaces of $f$ and $g$ are trivial.}

	\subsubsection{Unstable stacks}\label{S71}
	In this part, we assume that  $E^{u}(x,f)=E^{u}(x,g)$ for every $x\in \Gamma$. As before, we will simply write  $E^{\tau}(x,h)$ as $E^{\tau}(x)$ for $\tau=u,cs$ and $h=f,g$ if there is no confusion caused. By Theorem \ref{T6}, we have $\sigma^f_{x}|_{\widetilde{B}_x^{u}(r\ell(x)^{-1})}=\sigma^g_{x}|_{\widetilde{B}_x^{u}(r\ell(x)^{-1})}$
	for small $r>0$. Therefore, $w_r^{u}(x,f)=w_r^{u}(x,g)$ and we will simply write these notations as $w_r^{u}(x)$ and $\sigma_{x}$ respectively.
	
	In order to introduce some results on the regularity of the unstable manifolds, we will introduce some notations as follows.  For $\tau=u,cs$, let $B_x^{\tau}(r)=\{v\in E^{\tau}(x):|v|\leq r\}$.  Denote by
	$(C(B_{x}^{u}(\delta \ell_0^{-3}),E^{cs}(x)),||\cdot||)$ the space of all continuous functions from $B_{x}^{u}(\delta \ell_0^{-3})$ to $E^{cs}(x)$ with the $C^0$ super norm $||\cdot||$.
	
	Since $x\mapsto E^{\tau}(x)$ is $\mu$-continuous, by Definition \ref{D1} there exists a sequence of increasing compact subsets $\{K_n\}$ so that $\mu(\bigcup_n K_n)=1$ and the map $x\mapsto E^{\tau}(x)$ is continuous on $K_n$ for every $n\ge 1$. Recall that  $\Gamma_{\ell}=\{x\in \Gamma: \ell(x)\leq \ell\}$, where $\ell(x)$ is defined in Lemma \ref{L43}, and the constants $ {\delta}'_2 $ and  $K_0$  are given  in Lemma \ref{L63} and Lemma \ref{L42} respectively.

	\begin{theorem}[\cite{Young17} Lemma 6.5]\label{P7}
     Fix $\ell_0$ and $n_0>1$  so that $\mu(\Gamma_{\ell_0}\cap K_{n_0})>0$, and take $x_0\in \Gamma_{\ell_0}\cap K_{n_0}.$ For $r>0$ and $x\in \Gamma,$  let
		$$U(x,r):=\Gamma_{\ell_0}\cap K_{n_0}\cap B(x,r).$$
		 Then, there exists  $0<\delta_3<{\delta}'_2$ such that for every $0<\delta\leq \delta_3$, the following properties hold for sufficiently small  $r_0>0$:
		\begin{enumerate}
			\item[(1)] for every $y\in U(x_0,r_0)$, there exists a continuous function $\sigma_{x_0}^{y}: Dom(y) \rightarrow E^{cs}(x_0)$ such that
			$$\exp_{x_0}\graph(\sigma_{x_0}^{y})=\exp_{y}\graph(\sigma_{y}|_{B^{u}_{y}(2\delta \ell_{0}^{-3})})$$
            where $Dom(y) \supset B_{x_0}^u(\delta \ell_0^{-3})$ is a subset of $E^{u}(x_0)$;
			\item[(2)] the mapping $\Theta:U(x_0,r_0)\rightarrow C(B_{x_0}^{u}(\delta \ell_0^{-3}),E^{cs}(x_0))$ defined by $\Theta(y)=\sigma_{x_0}^{y}|_{B_{x_0}^{u}(\delta \ell_0^{-3})}$ is continuous.
		\end{enumerate}
	\end{theorem}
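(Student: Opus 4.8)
The plan is to follow the argument of \cite[Lemma 6.5]{Young17}; the only adaptation needed in our setting is that, by Theorem \ref{T6}, the local unstable manifolds of $f$ and of $g$ coincide, so that $w^u_\delta(\cdot)$ and $\sigma_{(\cdot)}$ are unambiguous and the re-coordinatization below can be carried out with respect to them. Fix $x_0\in\Gamma_{\ell_0}\cap K_{n_0}$. For $y$ near $x_0$ the local unstable manifold $w^u_\delta(y)=\exp_y\graph(\sigma_y)$ is an a priori graph over $E^u(y)$, and the idea is to rewrite it, restricted to a suitable domain, as a graph over $E^u(x_0)$ based at $x_0$. Two facts make this work: on $\Gamma_{\ell_0}$ these manifolds have uniformly controlled size and slope, since $\Lip\sigma_y\le\frac1{10}$ in the norm $|\cdot|_y$ by Theorem \ref{Unstable}(2) while $\frac1{K_0}|\cdot|\le|\cdot|_y\le\ell_0|\cdot|$ by Lemma \ref{L42}; and on $K_{n_0}$ the maps $x\mapsto E^u(x)$ and $x\mapsto E^{cs}(x)$ are continuous by Theorem \ref{MET fg}(d), so $E^u(y),E^{cs}(y)$ are as close to $E^u(x_0),E^{cs}(x_0)$ as we wish once $r_0$ is small.

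First I would fix $\delta_3\in(0,{\delta}'_2)$ and take $\delta\le\frac1{6K_0}\delta_3$, so that $\delta<{\delta}'_1$ (making Theorem \ref{Unstable} applicable) and $B^u_y(2\delta\ell_0^{-3})\subset\widetilde B^u_y(\delta\ell(y)^{-1})$ for all $y\in\Gamma_{\ell_0}$, the powers of $\ell_0^{-1}$ supplying the slack. For $y\in U(x_0,r_0)$ I would write $y-x_0=\alpha+\beta$ with $\alpha\in E^u(x_0)$, $\beta\in E^{cs}(x_0)$, and realise $E^u(y)$, $E^{cs}(y)$ as graphs $\{w+T_yw:w\in E^u(x_0)\}$, $\{\zeta+S_y\zeta:\zeta\in E^{cs}(x_0)\}$ of bounded linear maps $T_y$, $S_y$; by continuity of the splitting on $K_{n_0}$ (and $\dim E^u$ being constant, Lemma \ref{L2}) one has $\|\alpha\|,\|\beta\|,\|T_y\|,\|S_y\|=o(1)$ as $r_0\to0$, uniformly in $y$. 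A point $z=y+v+\sigma_y(v)\in w^u_\delta(y)$, with $v=w+T_yw\in E^u(y)$, then satisfies $\pi^u_{x_0}(z-x_0)=\xi$ precisely when $w$ is a fixed point of
\[
w\longmapsto \xi-\alpha-S_y\big(\pi^{cs}_{x_0}\sigma_y(w+T_yw)\big).
\]
Since $\sigma_y(0)=0$, this map has Lipschitz constant $O(\|S_y\|)=o(1)$ in $w$ (the factor $K_0\ell_0$ from the norm comparison being harmless as $\|S_y\|\to0$), so it has a unique fixed point $w(\xi)$, Lipschitz in $\xi$, with $\|w(\xi)\|\le(1+o(1))(\|\xi\|+\|\alpha\|)$. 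Setting $v(\xi)=w(\xi)+T_yw(\xi)$, $\sigma^y_{x_0}(\xi):=\pi^{cs}_{x_0}(y-x_0+v(\xi)+\sigma_y(v(\xi)))$ and $Dom(y):=\{\xi:v(\xi)\in B^u_y(2\delta\ell_0^{-3})\}$, one checks for $r_0$ small that $\|\xi\|\le\delta\ell_0^{-3}$ implies $|v(\xi)|\le2\delta\ell_0^{-3}$, so $Dom(y)\supset B^u_{x_0}(\delta\ell_0^{-3})$; injectivity of $v\mapsto\pi^u_{x_0}(y-x_0+v+\sigma_y(v))$ (again because $\|S_y\|$ is small) then shows $\xi\mapsto x_0+\xi+\sigma^y_{x_0}(\xi)$ parametrises exactly $\exp_y\graph(\sigma_y|_{B^u_y(2\delta\ell_0^{-3})})$, which is part (1).

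For part (2), as $y$ ranges over $U(x_0,r_0)$ the data $\alpha,\beta,T_y,S_y$ vary continuously, by continuity of $x\mapsto E^u(x),E^{cs}(x)$ on $K_{n_0}$ (Theorem \ref{MET fg}(d)) and of the attendant linear-algebra operations; granting in addition that $\sigma_y$ varies continuously in the $C^0$-norm on $B^u_y(2\delta\ell_0^{-3})$, the fixed point $w(\xi)$ and hence $\sigma^y_{x_0}|_{B^u_{x_0}(\delta\ell_0^{-3})}$ depend continuously on $y$, uniformly in $\xi$, which is exactly continuity of $\Theta$. The hard part is precisely this $C^0$-continuity of $y\mapsto\sigma_y$ on the block $\Gamma_{\ell_0}$: the graph transform $\Psi_{x,h}$ of Lemma \ref{L62} is a uniform contraction, but iterating it backwards from $h^{-n}y$ brings in the subspaces at $h^{-k}y$, which need not lie in $K_{n_0}$, so continuity of the splitting is unavailable along backward orbits. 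Following \cite{Young17}, I would circumvent this via the invariance description of Lemma \ref{L61} together with the uniform Lyapunov-norm hyperbolicity of Corollary \ref{C43}: if two points of $\Gamma_{\ell_0}$ are close, the points on their local unstable manifolds get pinned together by backward expansion along $E^u$ and forward contraction along $E^{cs}$, and quantifying this forces the two manifolds to stay $C^1$-close; carrying out that comparison supplies the missing continuity and completes (2).
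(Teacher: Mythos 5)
The paper does not prove this statement; it imports it verbatim (up to notation) as \cite[Lemma 6.5]{Young17}, with the only adaptation being the one you note at the outset, namely that by Theorem \ref{T6} the objects $\sigma_y$ and $w^u_\delta(y)$ are unambiguous, so Young's lemma applies with $h=f$ (or $h=g$). There is therefore no in-paper proof to compare against, and the relevant comparison is with the cited argument, which your proposal attempts to reconstruct.

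Your set-up for part (1) is correct: writing $E^u(y),E^{cs}(y)$ as graphs of $T_y,S_y$ over $E^u(x_0),E^{cs}(x_0)$, the re-graphing problem becomes the fixed-point equation $w\mapsto\xi-\alpha-S_y\bigl(\pi^{cs}_{x_0}\sigma_y(w+T_yw)\bigr)$, whose Lipschitz constant is small because $|S_y|=o(1)$ on $K_{n_0}$ (by $\mu$-continuity of the splitting) and $\Lip\sigma_y\le\tfrac1{10}$ in Lyapunov norm, hence $\le\tfrac{K_0\ell_0}{10}$ in ambient norm by Lemma \ref{L42}. This yields a unique $w(\xi)$, and checking $Dom(y)\supset B^u_{x_0}(\delta\ell_0^{-3})$ for small $r_0$ is routine from your bound on $|w(\xi)|$.

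The gap is in part (2). Reducing continuity of $\Theta$ to ``$C^0$-continuity of $y\mapsto\sigma_y$ on the block'' is essentially a reformulation rather than progress: to bound $\Theta(y)-\Theta(y')$ through your fixed-point formula one must compare $\sigma_y(w+T_yw)$ with $\sigma_{y'}(w+T_{y'}w)$, i.e.\ compare values of two distinct unstable graphs at points with the same $E^u(x_0)$-projection, which is precisely the content of assertion (2). You correctly name the ingredients that make such a comparison possible --- the dynamical characterization of $W^u_\delta$ in Lemma \ref{L61} and uniform backward-orbit hyperbolic estimates --- and the paper's own proof of Lemma \ref{L72} shows exactly the shape of pinning computation needed (Theorem \ref{Unstable}(3) for expansion along the leaf, Lemma \ref{L42} to pass between norms, and $\ell(h^{-n}x)\le\ell_0 e^{n\varepsilon}$ to control the temperate growth). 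But your proposal stops at naming these tools without producing the quantitative estimate. Without it, part (2) remains a pointer to \cite{Young17}; that is also what the paper does, so the discrepancy is not in approach but in the completeness of the reconstruction.
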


    Let $x_0,U(x_0,r_0)$ and $\Theta$  be given as above, and let $\bar{U}\subset U(x_0,r_0)$ be a compact subset. The following set
	$$\cS=\bigcup_{y\in \bar{U}} \exp_{x_0}(\graph \Theta(y)),$$
	is called  a \textbf{stack of local unstable manifolds}, and the set $\exp_{x_0}(\graph \Theta(y))$ is called the \textbf{unstable leave}.
	We refer the reader to Sections 6.2 and 7.2 in \cite{Young17} for more details about unstable stack.
	
    \subsubsection{Construction of the partition }\label{sub7.2}
    In this section, we will construct a measurable partition $\eta$ of a full $\mu$-measure set such that $h_\mu(h,\eta)=h_\mu(h)$ for $h=f$ or $g$. See \cite{Rohlin67} for the detailed description of the quantity $h_\mu(h,\eta)$.

	We first recall some notations. For a measurable partition $\eta$, let $\eta(x)$ denote the element of $\eta$ containing $ x $. For two measurable partitions $\eta_1$ and $\eta_2$, we say $\eta_2$ is a refinement of $\eta_1$
    if $\eta_1(x)\supset \eta_2(x)$ for $\mu$-almost every $x$,  and denote it by $\eta_1\leq \eta_2 $. Let further that  $\eta_1 \vee \eta_2=\{A\cap B:A\in \eta_1, B\in\eta_2\}$, and $h^{-1} \eta_1=\{h^{-1}(A):A\in\eta_1\}$ where $h=f$ or $g$. A measurable partition $\eta$ is called
	$h$-decreasing if $\eta \leq h^{-1} \eta$, and $\eta$ is called $(f,g)$-decreasing, if $\eta$ are both $f$-decreasing and $g$-decreasing.

    Fix $0<\delta< \delta_3$, let
	$$\cS=\bigcup_{y\in \bar{U}} \exp_{x_0}(\graph \Theta(y)|_{\delta \ell_0^{-3}})$$
	where $x_0\in \Gamma$ is a fixed point so that $\mu(\cS)>0$. Denote by $\widetilde{\cS}$ the set $\bigcup_{n,k\geq0}f^ng^k \cS$, then $\mu(\widetilde{\cS})=1$ since $\mu$ is $(f,g)$-ergodic.
	
	Note that the distinct unstable leaves in $\cS$ do not intersect with each other.  Let $\xi$ be the measurable partition of $\cS$ into unstable leaves (see \cite[Lemma 6.7]{Young17}). For $n,k\geq0$, let $\xi_{n,k}=\{f^{n}g^k(W):W\in \xi\}\cup (\widetilde{\cS}-f^ng^k(\cS))$ be a measurable partition of $\widetilde{\cS}$. Finally, consider the measurable partition of $\widetilde{\cS}$ as follows:
	$$\eta=\bigvee_{n,k\geq0}\xi_{n,k}.$$	
	By the construction of $\eta$, one can show that $y\in \eta(x)$ if and only if
	\begin{equation}\label{eq:eta}
		\left\{
		\begin{alignedat}{2}
			&f^{-n}g^{-k}y \in \xi(f^{-n}g^{-k}x) \quad &\text{if}& \ f^{-n}g^{-k}(x)\in \cS; \\
			&y\notin f^{n}g^{k}(\cS),                                    &\text{if}& \ f^{-n}g^{-k}(x)\notin \cS
		\end{alignedat}
		\right.
	\end{equation}
	for every $n,k\geq0$. 
	Hence, we have  $\eta$ is $(f,g)$-decreasing.
	As Proposition 8.1 in \cite{Hu96}, by choosing small $\delta>0$ appropriately, one can make the partition $\eta$ subordinate to the unstable foliation $W^{u}$ (see definition in \cite[Definition 7.2]{Young17}). 
	
\subsubsection{Proof of the second statement}
    In this section, we will first show that $h_\mu(f)=h_\mu(f,\eta)$ and $h_\mu(g)=h_\mu(g,\eta)$ with respect to the partition constructed in the previous section. By a standard argument, the second statement of Theorem \ref{B} follows immediately.
	\begin{lemma}\label{entropy}
		Suppose that $E^{c}(x,f)=E^{c}(x,g)=\{0\}$ for every $x\in \Gamma$. 
		Let $\eta$ be a partition constructed as above. Then
		$$h_{\mu}(f,\eta)=h_{\mu}(f), \ h_{\mu}(g,\eta)=h_{\mu}(g).$$
	\end{lemma}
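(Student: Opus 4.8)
The plan is to establish the identity for $h=f$; the case $h=g$ follows by the symmetric argument, using the fact (from the preceding lemma) that $\bigvee_{k\ge 0}g^{-k}\eta$ is the partition of $\widetilde{\cS}$ into points in place of $\bigvee_{n\ge 0}f^{-n}\eta$. First I would reduce the statement to a single conditional entropy. Since $\eta$ is $(f,g)$-decreasing we have $f\eta\le\eta$, hence $\bigvee_{k\ge 1}f^{k}\eta=f\eta$, so Proposition \ref{entrop2}(4) gives $h_{\mu}(f,\eta)=H_{\mu}(\eta\mid f\eta)$, and Proposition \ref{entrop2}(3) (with $T=f$, which is invertible and measure preserving on $A$) rewrites this as $h_{\mu}(f,\eta)=H_{\mu}(f^{-1}\eta\mid\eta)$. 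Because $\eta$ has infinite entropy (its atoms are unstable plaques), this conditional formulation is the right object, and it suffices to prove that $H_{\mu}(f^{-1}\eta\mid\eta)$ is finite and equals $h_{\mu}(f)$.

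For the bound $h_{\mu}(f,\eta)\le h_{\mu}(f)$ together with finiteness, the key step is to produce a countable partition $\cP$ of $\widetilde{\cS}$ with $H_{\mu}(\cP)<\infty$ such that $f^{-1}\eta\le\eta\vee\cP$ modulo $\mu$. The point is that the atoms of $f^{-1}\eta$ contained in a given atom $\eta(x)$ are sub-plaques of $\eta(x)$, images under the inverse branch of $\widetilde{f}$ of pieces of nearby plaques, on which $\widetilde{f}$ acts with expansion at least $e^{\lambda^{+}(f)-\varepsilon}-\delta$ by Theorem \ref{Unstable}(3) and \eqref{eq:41}; since each unstable subspace $E^{u}(\cdot,f)$ is finite dimensional and the relative sizes of $\eta(x)$ and of these sub-plaques stay comparable along the $(f,g)$-orbit of the stack $\cS$, the atom $\eta(x)$ meets at most $N(x)$ atoms of $f^{-1}\eta$ with $\int\log N\,d\mu<\infty$. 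Choosing $\cP$ by Lemma \ref{par1} (available since $A$ has finite box-counting dimension) to separate these sub-plaques inside every $\eta$-atom then gives $f^{-1}\eta\le\eta\vee\cP$ with $H_{\mu}(\cP)<\infty$. Granting this, an induction yields $f^{-n}\eta\le\eta\vee\bigvee_{i=0}^{n-1}f^{-i}\cP$; combining it with the telescoping identity $H_{\mu}(f^{-n}\eta\mid\eta)=n\,H_{\mu}(f^{-1}\eta\mid\eta)$ --- a consequence of Proposition \ref{entrop2}(1),(3), $f$-invariance and the refining chain $\eta\le f^{-1}\eta\le\cdots\le f^{-n}\eta$ --- together with Proposition \ref{entrop2}(1),(2) gives
\begin{equation*}
H_{\mu}(f^{-1}\eta\mid\eta)\le\frac1n\,H_{\mu}\Bigl(\bigvee_{i=0}^{n-1}f^{-i}\cP\Bigr),
\end{equation*}
whose right-hand side converges to $h_{\mu}(f,\cP)\le h_{\mu}(f)$ as $n\to\infty$; the left-hand side is already finite since $H_{\mu}(f^{-1}\eta\mid\eta)\le H_{\mu}(\cP)<\infty$.

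For the reverse inequality $h_{\mu}(f)\le h_{\mu}(f,\eta)$ I would use the standard entropy estimate that, for every partition $\alpha$ with $H_{\mu}(\alpha)<\infty$,
\begin{equation*}
h_{\mu}(f,\alpha)\le h_{\mu}(f,\eta)+H_{\mu}\Bigl(\alpha\;\Big|\;\bigvee_{i\in\Z}f^{-i}\eta\Bigr).
\end{equation*}
Since $\bigvee_{i\in\Z}f^{-i}\eta$ refines $\bigvee_{i\ge 0}f^{-i}\eta$, which is the partition into points by the preceding lemma, the conditional term vanishes, so $h_{\mu}(f,\alpha)\le h_{\mu}(f,\eta)$ for all such $\alpha$; taking the supremum over $\alpha$ gives $h_{\mu}(f)\le h_{\mu}(f,\eta)$. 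Together with the previous step this yields $h_{\mu}(f,\eta)=h_{\mu}(f)$, and repeating the argument with the roles of $f$ and $g$ exchanged gives $h_{\mu}(g,\eta)=h_{\mu}(g)$.

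The step I expect to be the main obstacle is the construction of $\cP$ with $f^{-1}\eta\le\eta\vee\cP$ and $H_{\mu}(\cP)<\infty$ --- equivalently, that $\eta$ has finite entropy increment under $f$ (and under $g$). In finite dimensions this is a routine count of sub-discs; in the present setting one must check that these counts remain integrably bounded along the whole orbit $\bigcup_{n,k\ge 0}f^{n}g^{k}\cS$, where the Lyapunov norms and the functions $\ell(\cdot)$ vary. This is exactly the point at which the finite-dimensionality of the unstable subspaces, the uniform expansion estimate \eqref{eq:41} of Corollary \ref{C41}, and the finite box-counting dimension hypothesis (H2)(ii) --- entering both through Lemma \ref{par1} and through the integrability of the relevant functions $\log\rho_{\delta,h}$ as arranged in Section \ref{Sub52} --- are needed.
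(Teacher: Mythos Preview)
Your emphasis is inverted. The inequality $h_{\mu}(f,\eta)\le h_{\mu}(f)$ is automatic from Rokhlin's entropy theory: for any measurable partition $\eta$ with $f^{-1}\eta\ge\eta$ one has $H_{\mu}(f^{-1}\eta\mid\eta)\le h_{\mu}(f)$, essentially by the definition of metric entropy as a supremum. So your step~1---constructing a finite-entropy $\cP$ with $f^{-1}\eta\le\eta\vee\cP$, which you correctly flag as the hardest part of your argument---is not needed at all. The paper does not address this direction explicitly and simply states equality once the reverse inequality is established.

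The substantive direction is $h_{\mu}(f)\le h_{\mu}(f,\eta)$. Your approach via the inequality $h_{\mu}(f,\alpha)\le h_{\mu}(f,\eta)+H_{\mu}(\alpha\mid\bigvee_{i}f^{-i}\eta)$ and the generating property is sound in principle, but you neither prove that inequality nor indicate how. The paper takes a more concrete and self-contained route: it uses Lemma~\ref{par1} (with a radius $\rho_{\delta}(x)=\delta\ell_0^{-3}e^{-(\lambda+\varepsilon)(\tau_f(x)+\tau_g(x))}$ built from the return times to $\Gamma_{\ell_0}$, so that $\log\rho_\delta\in L^{1}(\mu)$) to build a countable partition $\cP$ with $H_{\mu}(\cP)<\infty$, then sets $\cQ=\cP\vee\{\cS\cap A,A\setminus\cS\}\vee\cP_0$ where $\cP_0$ is any finite partition with $h_{\mu}(f,\cP_0)>h_{\mu}(f)-r$. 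The geometric core---absent from your sketch---is to prove that the atoms of $\cQ^{+}=\bigvee_{n,k\ge 0}f^{n}g^{k}\cQ$ lie inside single local unstable leaves (this uses the choice of $\rho_\delta$, the estimate \eqref{eq:Expanding}, Lemma~\ref{L61}, and crucially $E^{c}(\cdot,f)=\{0\}$), hence $\eta\le\cQ^{+}$. From $\eta\le\cQ^{+}$ one obtains $h_{\mu}(f,\eta)\ge h_{\mu}(f,\cQ_g)-r\ge h_{\mu}(f,\cQ)-r>h_{\mu}(f)-2r$ by a direct conditional-entropy computation, with the correction term controlled because $\bigvee_{n\ge 0}f^{-n}\eta$ is into points. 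The same $\cQ$ works for $g$, which is why the paper's construction treats $f$ and $g$ symmetrically via the double join $\bigvee_{n,k\ge 0}f^{n}g^{k}\cQ$.
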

	\begin{proof}
	We only prove that $h_{\mu}(f,\eta)=h_{\mu}(f)$, and $h_{\mu}(g,\eta)=h_{\mu}(g)$ can be proven in a similar fashion.
	
	Let $\mu=\int \mu_{e} d \hat{\mu}(e)$ be the ergodic decomposition of $\mu$ with respect to $f$.
	It suffices to show that $h_{\mu_e}(f)=h_{\mu_e}(f,\eta)$ for $\hat{\mu}$-almost every $e$.
	
	Since $\mu(\widetilde{\cS})=1$, for $\hat{\mu}$-almost every $e$ one has 
	$$\mu_{e}(\widetilde{\cS})=\mu_{e}(\bigcup_{n,k\geq0} f^{n}g^{k}\cS)=1.$$
	For each $\mu_{e}$ with $\mu_{e}(\widetilde{\cS})=1$. Since $\mu_{e}$ is $f$-ergodic, we can take $k_e\in \N$ such that $\mu_{e}(g^{k_e}\bigcup_{n\geq 0}f^{n}\cS)=1$.
	Denote by $\eta_{e}:=\bigvee_{n\geq0}\xi_{n,k_e}$ and $\cS_e:=g^{k_e}\bigcup_{n\geq 0}f^{n}\cS$. 
	By Poincar\'{e}'s recurrence theorem, one can show that $\vee_{n \geq 0} f^{-n}\eta_{e}$ is a partitions of $\cS_e$ into points $\mu_e$-modulo $0$. 
	Using the same argument in the proof of Lemma 7.15 in \cite{Young16}, one can show that 
	$h_{\mu_e}(f)=h_{\mu_e}(f,\eta_{e})$.
	
	We now prove $h_{\mu_e}(f,\eta)=h_{\mu_e}(f,\eta_{e})$. Note that $\eta \geq \eta_{e}$, $\eta_{e}$ is $f$-decreasing,
	$\eta(x)\subset w^u(x)$, and the diameter of $(f^{-n} \eta_e)(x)$ tends to $0$ as $n\rightarrow +\infty$.
    By the same argument in the proof of Lemma 2.11 in \cite{Zang22} (also in \cite[Lemma 3.1.2]{Young85}), we have 
    $$h_{\mu_{e}}(f,\eta)=h_{\mu_{e}}(f,\eta\vee \eta_{e})=h_{\mu_{e}}(f,\eta_{e}).$$
    Therefore, we have $h_{\mu_e}(f,\eta)=h_{\mu_e}(f,\eta_{e})$.
    This completes the proof.
	\end{proof}

    Finally,  we will prove the second statement of Theorem \ref{B}, we write it as the following theorem.
	\begin{theorem}
		Let $\cB$ be a separable Banach space with norm $|\cdot|$  and $A\subset \cB$ a compact subset. Assume that $f,g:\cB \rightarrow \cB$ satisfy the conditions (H1)-(H3), $\mu \in \cE(f,g,A)$ satisfies the condition (H4),  
		$E^{u}(x,f)=E^{u}(x,g)$ and $E^{c}(x,f)=\{0\}=E^{c}(x,g)$ for $\mu$-almost every $x$. Then, we have that
		$$h_{\mu}(f)+h_{\mu}(g)=h_{\mu}(fg).$$
	\end{theorem}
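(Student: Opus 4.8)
The plan is to combine Theorem~\ref{A}, which already yields $h_\mu(fg)\le h_\mu(f)+h_\mu(g)$, with a matching lower bound extracted from the measurable partition $\eta$ of $\widetilde{\cS}$ constructed in Section~\ref{sub7.2}. Recall that we have reduced to the case where $(f,\mu)$ and $(g,\mu)$ have positive Lyapunov exponents (Section~\ref{S6}), so that the partition $\eta$---whose very construction uses $E^c(x,f)=\{0\}$ through Lemma~\ref{L72}---is at our disposal. I would begin by recording the refinement relations among $\eta,f\eta,g\eta,fg\eta$: since $\eta$ is $(f,g)$-decreasing and $f,g$ commute and preserve refinements, applying $f$ to $g\eta\le\eta$ gives $fg\eta\le f\eta$, applying $g$ to $f\eta\le\eta$ gives $fg\eta\le g\eta$; hence $fg\eta\le f\eta\le\eta$ and $\eta$ is $fg$-decreasing as well. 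Consequently $\bigvee_{k\ge1}h^k\eta=h\eta$ and, by Proposition~\ref{entrop2}(4), $h_\mu(h,\eta)=H_\mu(\eta|h\eta)$ for $h=f,g,fg$ (note $fg|_A$ is invertible and $\mu$-preserving).

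The core step is the identity $h_\mu(fg,\eta)=h_\mu(f,\eta)+h_\mu(g,\eta)$, which I would derive purely from the entropy calculus of Proposition~\ref{entrop2}. Expanding $H_\mu(\eta\vee g\eta|fg\eta)$ in two ways: using $fg\eta\le\eta$ (so $\eta\vee fg\eta=\eta$) and $g\eta\le\eta$ (so $H_\mu(g\eta|\eta)=0$) gives $H_\mu(\eta\vee g\eta|fg\eta)=H_\mu(\eta|fg\eta)$; using $fg\eta\le g\eta$ instead gives $H_\mu(\eta\vee g\eta|fg\eta)=H_\mu(g\eta|fg\eta)+H_\mu(\eta|g\eta)$. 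Since $g|_A$ is invertible and measure preserving and $fg\eta=g(f\eta)$, part~(3) gives $H_\mu(g\eta|fg\eta)=H_\mu(\eta|f\eta)$. Combining these three relations yields $H_\mu(\eta|fg\eta)=H_\mu(\eta|f\eta)+H_\mu(\eta|g\eta)$, which is the identity.

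It then remains to convert partition entropies into entropies of the maps. Lemma~\ref{entropy} already gives $h_\mu(f,\eta)=h_\mu(f)$ and $h_\mu(g,\eta)=h_\mu(g)$, so the conclusion will follow once $h_\mu(fg,\eta)=h_\mu(fg)$ is established. I would obtain this by rerunning the proof of Lemma~\ref{entropy} for the commuting pair $(fg,f)$ in place of $(f,g)$: one has $\mu\in\cE(fg,f,A)=\cE(f,g,A)$ by Proposition~\ref{P2}, and from $\lambda^\kappa(fg)=\lambda^\kappa(f)+\lambda^\kappa(g)$ together with $E^u(x,f)=E^u(x,g)$ one sees that $\lambda^\kappa(fg)>0\iff\lambda^\kappa(f)>0\iff\lambda^\kappa(g)>0$ for every $\kappa$, so $E^u(x,fg)=E^u(x,f)=E^u(x,g)$ and $\eta$ is subordinate to the unstable foliation of $fg$ too; choosing the finite partition $\cQ$ in that proof so that moreover $h_\mu(fg,\cQ)>h_\mu(fg)-r$, and using $\eta\le\cQ^+=\bigvee_{m,n\ge0}f^mg^n\cQ$ together with the fact that $\bigvee_{n\ge0}(fg)^{-n}\eta$ refines $\bigvee_{n\ge0}f^{-n}\eta$ and hence partitions $\widetilde{\cS}$ into points modulo $0$, the same estimates give $h_\mu(fg,\eta)\ge h_\mu(fg)-2r$ for every $r>0$. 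Putting everything together, $h_\mu(fg)=h_\mu(fg,\eta)=h_\mu(f,\eta)+h_\mu(g,\eta)=h_\mu(f)+h_\mu(g)$; the case $E^c(x,g)=\{0\}$ is symmetric by the remark following Lemma~\ref{entropy}.

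The step I expect to be the main obstacle is precisely $h_\mu(fg,\eta)=h_\mu(fg)$: the partition $\eta$ was built from the leaves of the common unstable foliation of $f$ and $g$, and one must check that it still captures the \emph{full} entropy of the composition rather than some proper sub-quantity. The coincidence $E^u(x,fg)=E^u(x,f)=E^u(x,g)$ is exactly what makes $\eta$ the correct partition for $fg$, after which the mechanism of Lemma~\ref{entropy} (a finite partition approximating the entropy from below, $\eta\le\cQ^+$, and the generating property of $\bigvee_{n\ge0}(fg)^{-n}\eta$) transfers with no new ideas; the entropy identity of the second paragraph is then routine bookkeeping.
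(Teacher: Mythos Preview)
Your proposal is correct, and the central entropy identity $H_\mu(\eta\mid fg\eta)=H_\mu(\eta\mid f\eta)+H_\mu(\eta\mid g\eta)$ is exactly the computation the paper carries out (the paper writes it as $H_\mu(f^{-1}g^{-1}\eta\mid\eta)=H_\mu(g^{-1}\eta\mid\eta)+H_\mu(f^{-1}g^{-1}\eta\mid g^{-1}\eta)$, which is the same statement after applying Proposition~\ref{entrop2}(3)).

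The one substantive difference is that you work harder than necessary. You devote your third paragraph to establishing the \emph{equality} $h_\mu(fg,\eta)=h_\mu(fg)$ by rerunning Lemma~\ref{entropy} for the pair $(fg,f)$, checking $E^u(x,fg)=E^u(x,f)$, that $\bigvee_{n\ge0}(fg)^{-n}\eta$ partitions into points, and so on. The paper avoids all of this: since partition entropy never exceeds the full entropy, one has for free that $h_\mu(fg)\ge h_\mu(fg,\eta)$, and the chain
\[
h_\mu(fg)\ \ge\ h_\mu(fg,\eta)\ =\ h_\mu(f,\eta)+h_\mu(g,\eta)\ =\ h_\mu(f)+h_\mu(g)
\]
already gives the lower bound; Theorem~\ref{A} then closes the gap. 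Your extra argument is not wrong (and the refinement $\bigvee_{n\ge0}f^{-n}\eta\le\bigvee_{n\ge0}(fg)^{-n}\eta$ you invoke does hold, since $\eta\le g^{-n}\eta$), but it is redundant once you have already decided to use Theorem~\ref{A}, as you announce in your first sentence. The equality $h_\mu(fg,\eta)=h_\mu(fg)$ ends up being a \emph{consequence} of the theorem rather than an ingredient in its proof.
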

	\begin{proof}
		Let $\eta$ be the partition constructed in Section \ref{sub7.2}, then (recall that $\eta$ is $ (f,g) $-decreasing) we have that 
		\begin{align*}
		h_{\mu}(fg)&\geq h_{\mu}(fg,\eta)\\
        &= H_{\mu}(f^{-1}g^{-1}\eta|\eta)\\
		&=H_{\mu}(f^{-1}g^{-1}\eta \vee g^{-1}\eta |\eta)\\
		&=H_{\mu}(g^{-1}\eta |\eta)+H_{\mu}(f^{-1}g^{-1}\eta | g^{-1}\eta)\\
		&=h_{\mu}(g,\eta)+h_{\mu}(f,\eta)\\
        &=h_{\mu}(f)+h_{\mu}(g).
		\end{align*}
		By Theorem \ref{A}, we have that $ h_{\mu}(f)+h_{\mu}(g)=h_{\mu}(fg).$
	\end{proof}
	
	%\subsection*{Acknowledgments}
	%The authors are grateful to the anonymous referees for valuable comments which helped to improve the manuscript greatly.
	
    \bibliography{LZ22-sub}

\begin{thebibliography}{10}

\bibitem{Lucas17}
Lucas Backes and Davor Dragi\v{c}evi\'{c}.
\newblock Periodic approximation of exceptional {L}yapunov exponents for semi-invertible operator cocycles.
\newblock {\em Ann. Acad. Sci. Fenn. Math.}, 44(1):183--209, 2019.

\bibitem{Young16}
Alex Blumenthal and Lai-Sang Young.
\newblock Absolute continuity of stable foliations for mappings of {B}anach spaces.
\newblock {\em Comm. Math. Phys.}, 354(2):591--619, 2017.

\bibitem{Young17}
Alex Blumenthal and Lai-Sang Young.
\newblock Entropy, volume growth and {SRB} measures for {B}anach space mappings.
\newblock {\em Invent. Math.}, 207(2):833--893, 2017.

\bibitem{Katok83}
Michael. Brin and Anatole Katok.
\newblock On local entropy.
\newblock In {\em Geometric dynamics ({R}io de {J}aneiro, 1981)}, volume 1007 of {\em Lecture Notes in Math.}, pages 30--38. Springer, Berlin, 1983.

\bibitem{Froyland18}
Davor Dragi\v{c}evi\'{c} and Gary Froyland.
\newblock H\"{o}lder continuity of {O}seledets splittings for semi-invertible operator cocycles.
\newblock {\em Ergodic Theory Dynam. Systems}, 38(3):961--981, 2018.

\bibitem{Quas12}
Cecilia Gonz\'{a}lez-Tokman and Anthony Quas.
\newblock A semi-invertible operator {O}seledets theorem.
\newblock {\em Ergodic Theory Dynam. Systems}, 34(4):1230--1272, 2014.

\bibitem{Hu96}
Huyi Hu.
\newblock Some ergodic properties of commuting diffeomorphisms.
\newblock {\em Ergodic Theory Dynam. Systems}, 13(1):73--100, 1993.

\bibitem{Katok80}
Anatole Katok.
\newblock Lyapunov exponents, entropy and periodic orbits for diffeomorphisms.
\newblock {\em Inst. Hautes \'{E}tudes Sci. Publ. Math.}, (51):137--173, 1980.

\bibitem{Kupka84}
Joseph Kupka and Karel Prikry.
\newblock The measurability of uncountable unions.
\newblock {\em Amer. Math. Monthly}, 91(2):85--97, 1984.

\bibitem{Young85}
Fran\c{c}ois Ledrappier and Lai-Sang Young.
\newblock The metric entropy of diffeomorphisms. {I}. {C}haracterization of measures satisfying {P}esin's entropy formula.
\newblock {\em Ann. of Math. (2)}, 122(3):509--539, 1985.

\bibitem{Li12}
Zhiming Li and Lin Shu.
\newblock The metric entropy of random dynamical systems in a {B}anach space: {R}uelle inequality.
\newblock {\em Ergodic Theory Dynam. Systems}, 34(2):594--615, 2014.

\bibitem{Li20}
Zhiming Li and Yujun Zhu.
\newblock Entropies of commuting transformations on {H}ilbert spaces.
\newblock {\em Discrete Contin. Dyn. Syst.}, 40(10):5795--5814, 2020.

\bibitem{Lian16}
Zeng Lian, Peidong Liu, and Kening Lu.
\newblock S{RB} measures for a class of partially hyperbolic attractors in {H}ilbert spaces.
\newblock {\em J. Differential Equations}, 261(2):1532--1603, 2016.

\bibitem{Lian10}
Zeng Lian and Kening Lu.
\newblock Lyapunov exponents and invariant manifolds for random dynamical systems in a {B}anach space.
\newblock {\em Mem. Amer. Math. Soc.}, 206(967):vi+106, 2010.

\bibitem{Lian20}
Zeng Lian and Xiao Ma.
\newblock Existence of periodic orbits and horseshoes for mappings in a separable {B}anach space.
\newblock {\em J. Differential Equations}, 269(12):11694--11738, 2020.

\bibitem{Lian11}
Zeng Lian and Lai-Sang Young.
\newblock Lyapunov exponents, periodic orbits and horseshoes for mappings of {H}ilbert spaces.
\newblock {\em Ann. Henri Poincar\'{e}}, 12(6):1081--1108, 2011.

\bibitem{Lian12}
Zeng Lian and Lai-Sang Young.
\newblock Lyapunov exponents, periodic orbits, and horseshoes for semiflows on {H}ilbert spaces.
\newblock {\em J. Amer. Math. Soc.}, 25(3):637--665, 2012.

\bibitem{Mane81}
Ricardo Ma\~{n}\'{e}.
\newblock On the dimension of the compact invariant sets of certain nonlinear maps.
\newblock In {\em Dynamical systems and turbulence, {W}arwick 1980 ({C}oventry, 1979/1980)}, volume 898 of {\em Lecture Notes in Math.}, pages 230--242. Springer, Berlin-New York, 1981.

\bibitem{Rohlin67}
V.~A. Rohlin.
\newblock Lectures on the entropy theory of transformations with invariant measure.
\newblock {\em Uspehi Mat. Nauk}, 22(5(137)):3--56, 1967.

\bibitem{Ruelle82}
David Ruelle.
\newblock Characteristic exponents and invariant manifolds in {H}ilbert space.
\newblock {\em Ann. of Math. (2)}, 115(2):243--290, 1982.

\bibitem{Thieullen87}
Philippe Thieullen.
\newblock Fibr\'{e}s dynamiques asymptotiquement compacts. {E}xposants de {L}yapounov. {E}ntropie. {D}imension.
\newblock {\em Ann. Inst. H. Poincar\'{e} Anal. Non Lin\'{e}aire}, 4(1):49--97, 1987.

\bibitem{Varzaneh21}
Mazyar~Ghani Varzaneh and S.~Riedel.
\newblock Oseledets splitting and invariant manifolds on fields of {B}anach spaces.
\newblock {\em J. Dynam. Differential Equations}, 35(1):103--133, 2023.

\bibitem{Walters82}
Peter Walters.
\newblock {\em An introduction to ergodic theory}, volume~79 of {\em Graduate Texts in Mathematics}.
\newblock Springer-Verlag, New York-Berlin, 1982.

\bibitem{Walters93}
Peter Walters.
\newblock A dynamical proof of the multiplicative ergodic theorem.
\newblock {\em Trans. Amer. Math. Soc.}, 335(1):245--257, 1993.

\bibitem{Zang22}
Yuntao Zang.
\newblock Entropies and volume growth of unstable manifolds.
\newblock {\em Ergodic Theory Dynam. Systems}, 42(4):1576--1590, 2022.

\end{thebibliography}
    \bibliographystyle{plain}
\end{document}